\documentclass{article}
\usepackage[preprint]{neurips_2026}

\usepackage[utf8]{inputenc} 
\usepackage[T1]{fontenc}    
\usepackage{hyperref}       
\usepackage{url}            
\usepackage{booktabs}       
\usepackage{amsfonts}       
\usepackage{nicefrac}       
\usepackage{microtype}      
\usepackage{xcolor}         

\usepackage{amsmath}
\usepackage{amssymb}
\usepackage{mathtools}
\usepackage{amsthm}
\usepackage{enumitem}

\theoremstyle{plain}
\newtheorem{theorem}{Theorem}[section]
\newtheorem{proposition}[theorem]{Proposition}
\newtheorem{lemma}[theorem]{Lemma}
\newtheorem{corollary}[theorem]{Corollary}
\newtheorem{definition}[theorem]{Definition}

\theoremstyle{definition}

\theoremstyle{remark}
\newtheorem{remark}[theorem]{Remark}
\newtheorem{assumption}[theorem]{Assumption}

\newcommand{\mbE}{\mathbb{E}}
\newcommand{\mbS}{\mathbb{S}}
\newcommand{\mbC}{\mathbb{C}}

\newcommand{\mbP}{\mathbb{P}}

\newcommand{\mcF}{\mathcal{F}}

\newcommand{\mcL}{\mathcal{L}}
\newcommand{\mcM}{\mathcal{M}}
\newcommand{\mcN}{\mathcal{N}}
\newcommand{\mcP}{\mathcal{P}}

\newcommand{\mcR}{\mathcal{R}}
\newcommand{\mbR}{\mathbb{R}}
\newcommand{\mcS}{\mathcal{S}}

\newcommand{\U}{\mathcal{U}} 

\newcommand{\diff}{\mathrm{d}}

\DeclareMathOperator{\Cov}{Cov}
\DeclareMathOperator{\Var}{Var}
\DeclareMathOperator{\tr}{tr}
\DeclareMathOperator{\inj}{inj}
\DeclareMathOperator{\argmin}{argmin}
\DeclareMathOperator{\argmax}{argmax}

\newcommand{\Normal}{\mathcal{N}}
\newcommand{\IF}{\mathrm{IF}}

\title{Intrinsic Riemannian Cross-covariance for Manifold-valued Random Objects}

\author{%
    Carlos J. Soto\\
    Department of Mathematics and Statistics\\
    University of Massachusetts Amherst\\
    Amherst, MA 01002 \\
    \texttt{carlossoto@umass.edu} \\
    \And  
    Cheng Wang \\  
    Department of Mathematics and Statistics\\
    University of Massachusetts Amherst\\
    Amherst, MA 01002 \\
    \texttt{chengwang@umass.edu} \\
    \And
    Yujing Huang \\
    Department of Industrial and Systems Engineering\\
     State University of New York at Buffalo\\ 
    Buffalo, NY 14260 \\
    \texttt{yujinghu@buffalo.edu} \\
    \And
    Xiaoyu Chen \\
    Department of Industrial and Systems Engineering\\
    State University of New York at Buffalo\\ 
    Buffalo, NY 14260 \\
    \texttt{xchen325@buffalo.edu} \\
}

\begin{document}

\maketitle

\begin{abstract}
  Covariance estimation yields a fundamental second-order statistic underlying representation learning, dimension reduction, and dependence modeling. While covariance has been well understood in Euclidean spaces, it is ill-defined for random objects residing on nonlinear Riemannian manifolds, which increasingly arise in modern machine learning applications involving shapes, symmetric positive definite (SPD) matrices, etc. 
    This paper introduces an intrinsic Riemannian cross-covariance for manifold-valued random objects. Our approach defines covariance and correlation by transporting local variations to a common tangent space via parallel transport, yielding a second-order descriptor that is independent of arbitrary coordinate choices. We establish that the proposed covariance inherits desirable properties of its Euclidean counterparts and characterize its asymptotic behavior.
    Numerical studies on spheres and SPD manifolds, together with real-data experiments on heart valve shapes in Kendall's shape space, demonstrate the effectiveness of our estimators and verify the stated properties. Our results position the Riemannian covariance as a fundamental tool for second-order learning and analysis in non-Euclidean representation spaces. 
\end{abstract}

\section{Introduction}
Understanding the dependence between two random objects is a natural and important task underlying representation learning. This dependence can typically be quantified via the cross-covariance matrix along with subsequent analysis including a scalar covariance and correlation. The foundations for the correlation analysis, however, are grounded in the existence of a vector space structure. For non-Euclidean manifold-valued data, such vector space structures are not well-defined, preventing the natural definition of cross-covariance and correlation.

A Riemannian manifold is a generalization of Euclidean space which need not be flat. Data that naturally lives on Riemannian manifolds is often encountered in statistical analysis. Extending linear, statistical methods to non-linear spaces is a ubiquitous endeavor such as the Fr\'echet mean, the Riemannian center of mass, which extends the notion of the Euclidean mean to metric spaces \cite{frechet1948elements}. Data such as directional data \cite{mardia1975statistics}, meteorological data \cite{fu2025adaptive}, protein data \cite{mardia2018directional}, and discrete densities \cite{rao1945information}, 
can all be thought of as spherical data, a Riemannian manifold of constant, positive curvature. Other common examples include covariance matrices, for instance in brain imaging, as elements of the cone of symmetric positive definite matrices \cite{dai2019analyzing}, point cloud shape data as elements of complex projective space \cite{kendall1984shape}, and tree structured data \cite{lu2015clustering}. Analyzing such data requires extensions of the standard Euclidean statistical tools.

Due to the prevalence of non-Euclidean data in machine learning analyses and the need to quantifiably analyze such data, generalizations of the notion of covariance have been proposed. Some such generalizations include, to Riemannian manifolds for a single random variable \cite{pennec2006intrinsic}, to Wasserstein space \citep{petersen2019frechet}, to Riemannian functional data
\cite{LinIntrinsic,shao2022intrinsic}, and to Riemannian manifolds for more than one random variables sharing the same support \cite{abuqrais2024riemannian}.

The contributions of this paper are as follows:
\begin{enumerate}
    \item we introduce the \textit{intrinsic footpoint-invariant Riemannian cross-covariance}, a notion of covariance between two random variables that live on a Riemannian manifold, and subsequently define the \textit{Riemannian correlation};
    \item we elucidate the properties of each introduced estimate as well as establish their asymptotic behavior, and;
    \item we demonstrate the effectiveness of our estimates on simulated data on the sphere and the SPD manifold, as well as on experimental shape data via Kendall's shape space.
\end{enumerate}

\section{Background}
In the following, we introduce the necessary background on differential geometry and Riemannian manifolds. For additional details, we refer to \citet{do1992riemannian} and \citet{lee2018introduction}. After introducing the foundational geometric tools, we give background on the state-of-the-art covariance estimation for Riemannian manifolds.
\subsection{Riemannian Geometry}\label{app:Riem}
Let $\mcM$ be a $d-$dimensional smooth topological manifold. At every point $p\in\mcM$ there exists a neighborhood which is diffeomorphic to $\mbR^d$. Suppose $\alpha:[-\delta,\delta]\rightarrow\mcM$ is a smooth curve on $\mcM$ such that $\alpha(0)=p$ and denote by $\alpha'$ its derivative. The tangent space of $\mcM$ at $p$, denoted $T_p\mcM$, is the collection of all tangent vectors $\alpha'(0)$, that is $T_p\mcM=\{\alpha'(0)|\alpha(0)=p, \alpha:[-\delta,\delta]\rightarrow\mcM\}$. Each tangent space is a vector space with $\dim T_p\mcM=\dim\mcM$, and the collection of all tangent spaces is referred to as the tangent bundle, $T\mcM=\{(p,v)|v\in T_p\mcM,p\in\mcM\}$. We equip each tangent space with an inner product $g_p(\cdot,\cdot)=\langle\cdot,\cdot\rangle_{p}:T_p\mcM\times T_p\mcM\rightarrow \mbR$ which varies smoothly on $\mcM$. A Riemannian manifold is a smooth manifold equipped with a Riemannian metric, $(\mcM,g)$.

With respect to the metric, we have a unique torsion-free, affine connection $\nabla$, the Levi--Civita connection. This connection allows us to define the notion of parallel vector fields. Given a smooth path $\alpha:[0,1]\rightarrow\mcM$, the parallel transport of $v\in T_{\alpha(0)}\mcM$ is denoted as $\Gamma_{\alpha(0)}^{\alpha(1)}v$ and satisfies $\nabla_{\alpha'(t)}V(t)=0$ with $V$ being a smooth vector field with $V(\alpha(0))=v$. Equivalently the vector field $V$ must satisfy $\frac{D}{dt}V=0$ with respect to the covariant derivative. Since parallel transport moves tangent vectors in a parallel fashion, it preserves inner products; that is, parallel transport is an isometry, so for $v,w\in T_p\mcM$ we have $\langle v,w\rangle_{p}=\langle\Gamma_p^q v,\Gamma_p^q  w\rangle_{q}$. As a consequence, parallel transport preserves vector norms. Given a smooth path $\alpha:[0,1]\rightarrow\mcM$, with $v\in T_{\alpha(t)}\mcM$, one can show that there exists a unique parallel vector field $V$ such that $V(t)=v$ \citep[Theorem 4.32]{lee2018introduction}. This, however, is still dependent on the chosen path.

The length $\mcL(\cdot)$ of a smooth path $\alpha$ is the cumulative instantaneous rate of change, i.e., $\mcL(\alpha)=\int_0^1 \langle \alpha'(t),\alpha'(t)\rangle^{1/2}_{\alpha(t)}$. A geodesic is a path that minimizes the length between two points. That is, a geodesic is the solution to $$\gamma=\argmin_{\alpha:\alpha(0)=p,\alpha(1)=q}\mcL(\alpha).$$ This notion of minimal length results in a notion of distance; the distance between $p,q\in\mcM$ is defined as the length of the geodesic between said points. So, $d(p,q)=\mcL(\alpha)$ such that $\alpha$ is a geodesic with $\alpha(0)=p$ and $\alpha(1)=q$.


The exponential map maps elements from a tangent space to the manifold, that is,  $\exp_p:T_p\mcM\rightarrow\mcM$ such that $\exp_pv:=q$ such that there exists a geodesic $\alpha:[0,1]\rightarrow\mcM$ such that $\alpha(0)=p$ and $\alpha(1)=q$ and $\alpha'(0)=v$. The exponential map is invertible within a neighborhood of its footpoint $p$, this inverse map is referred to as the logarithm map. The logarithm map $\log_p:\mcM\rightarrow T_p\mcM$ produces a vector in the tangent space with direction and length of the input, $d(p,q)=\|\log_pq\|_p=\|\log_qp\|_q$.

As we require the exponential map to be invertible, we need to consider on which domain it is as such. Riemannian manifolds are locally diffeomorphic to a Euclidean space, and the injectivity radius formally quantifies this locality. 
The injectivity radius at $p\in\mcM$, denoted as $\inj(p)$, is the largest (supremum) radius of an open ball in $T_p\mcM$ centered at the origin such that $\exp_p$ is defined and $B_r\subset T_p\mcM$ and $\exp_p(B_r)$ are diffeomorphic. The injectivity radius of the manifold, $\inj{\mcM}$, is the smallest injectivity radius on the manifold, $\inj{\mcM}=\inf\{\inj(p)|p\in\mcM\}$.

The Riemannian volume form $\diff\omega$ is defined as $\diff\omega=\sqrt{|g|}\diff x^1\times \cdots\times \diff x^d$ where $|g|$ is the determinant of the matrix representation of the metric in local coordinates. The Riemannian volume form allows us to define densities with respect to this base measure.
 
The term ``footpoint" is not formally defined but generally refers to the point on the manifold at which the object of interest is defined. For instance the footpoint of $T_p\mcM$, $\log_p$, and $\exp_p$ is $p$.
For the remainder of this paper, unless otherwise stated, we assume $\mcM$ to be a complete, compact, finite-dimensional Riemannian manifold. For each manifold we require an exponential map, the logarithm map, and the parallel transport. These are standard tools, and we include them in \ref{app:ManNotes} for completeness.

\subsection{Related Literature - Covariance}\label{RL_Cov}

The Fr\'echet mean \citep{frechet1948elements} generalizes the notion of the arithmetic mean to metric spaces and is defined with respect to the Fr\'echet variance function.
The Fr\'echet variance of a random variable $X$ with probability measure $\lambda$ at $x\in\mcM$ is defined  as
\begin{align*}
    \sigma^2_X(x)&=\mbE( d^2(x,y))\\
    &=\int_{\mcM} d^2(x,y)\diff\lambda(y).
\end{align*}

The Fr\'echet mean is the point on the manifold that minimizes the variance, i.e., $$\mbE(X)=\mu\coloneq \argmin_{x\in\mcM}\sigma^2_X(x).$$ While the Fr\'echet variance is defined at any point $x$, if no point is designated, then the variance is assumed to be at the mean, i.e., $\sigma^2_X\coloneq \sigma^2_X(\mu)$. The Fr\'echet variance may not have a unique minimizer on general manifolds, namely on positively curved manifolds \cite{karcher1977riemannian,afsari2011riemannian}. To ensure the existence of a unique mean, one is required to bound the density in a geodesic ball centered at $p$ with radius $r\leq\frac{1}{2}\min\{\inj\mcM,\pi/\sqrt{\kappa}\}$, $B_r(p)$, \citep{karcher1977riemannian,kendall1990probability}, where $\inj\mcM$ denotes the injectivity radius and $\kappa$ is an upper bound on the sectional curvature.

\citet{pennec2006intrinsic} first extended the definition of covariance to Riemannian manifolds focusing on a single random variable. Suppose $X$ is a random variable as above with a unique mean $\mbE(X)=\mu$ then the covariance of $X$ is defined as 
\begin{align*}
    \Sigma_{X,X}^{Pennec}=\Cov_X(\mu):&=\mbE\left[(\log_{\mu}X)(\log_\mu X)^T\right] \\
    &= \int_\mcM (\log_{\mu}x)(\log_\mu x)^T d\lambda(x).
\end{align*}
Similar to the Euclidean setting, we have that the trace of the covariance is the Fr\'echet variance, that is, $\tr(\Sigma^{Pennec}_{X,X})=\sigma_X^2$. Given a dataset $D=\{x_1,\dots,x_n\}$, the empirical estimate of the covariance is $\hat{\Sigma}^{Pennec}_{X,X}=\frac{1}{n}\sum_i\left[(\log_{\hat{\mu}}x_i)(\log_{\hat{\mu}} x_i)^T\right]$ where $\hat{\mu}$ is the empirical estimate of the Fr\'echet mean. 

The previous definition, however, is limited to measuring the covariance of a single random variable. Extending the idea of covariance to more than one variable for complex spaces has been of recent interest.  \citet{abuqrais2024riemannian} propose an extension of the covariance definition to two random variables on Riemannian manifolds. To this end, the authors require that both densities have the same support. That is, given random variables $X$ and $Y$ defined on $\mcM$, their respective measures are defined, almost surely, in the same geodesic ball $B_r(p)$. They define the covariance as follows.

\begin{definition}\label{def:AbuCov}[\citet{abuqrais2024riemannian}]
   The covariance between random variances $X$ and $Y$ at point $p\in\mcM$ is 
   
       $$\Sigma^{A\&P}_{X,Y}(p)= \mbE\left[(\log_pX)(\log_pY)^T\right]-\mbE\left[\log_pX\right]\mbE\left[\log_pY\right]^T.$$
  
   
   The Riemannian covariance is defined as $$\text{Rcov}(p)(X,Y)\coloneq \tr(\Sigma^{A\&P}_{X,Y}(p)).$$ 
\end{definition}

Note that if $Y=X$ and $p=\mu$ then we have that $\Sigma^{A\&P}_{X,Y}(p)=\Sigma^{Pennec}_{X,X}$ and so $\text{Rcov}(\mu)(X,X)=\sigma^2_X.$ The empirical estimate in this case is $$\hat{\Sigma}^{A\&P}_{X,Y}(p)=\frac{1}{n}\sum \left[(\log_px_i)(\log_py_i)^T\right]-\left(\frac{1}{n}\sum\log_px_i\right)\left(\frac{1}{n}\sum\log_py_i\right)^T.$$
Definition \ref{def:AbuCov}, by construction, requires both measures to exist in the same geodesic ball; this can be a restrictive limitation. Further, the covariance is dependent on the choice of footpoint $p$.  \citet{abuqrais2024riemannian} give some guidance on how to choose $p$ such as selecting the midpoint of the geodesic connecting the two sample Fr\'echet means. We aim to address both of these limitations. The following proposition shows that such definition is not footpoint-invariant; the proof can be found in \ref{ss:AdProof}.

\begin{proposition}\label{prop:ap-variant}
    On a Riemannian manifold with non-zero curvature, the Abuqrais-Pigoli covariance $\text{Rcov}(p)(X,Y)$ is not invariant with respect to the choice of footpoint $p$.
\end{proposition}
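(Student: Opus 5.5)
This is a non-invariance claim, so I will prove it by an explicit counterexample: two footpoints $p\neq p'$ and a pair $(X,Y)$ with $\text{Rcov}(p)(X,Y)\neq\text{Rcov}(p')(X,Y)$. The statement says "non-zero curvature" rather than any specific space. I will therefore build the example locally, in a small geodesic ball where some sectional curvature $K\neq 0$. Normal coordinates and the standard second-order expansion of the logarithm map then let the argument apply to any such manifold. As a sanity check I will also keep a concrete model, the unit sphere $\mbS^2$ with $K=1$, where every quantity can be computed in closed form.

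\textbf{Step 1: reduce to a simple distribution.} Take $(X,Y)$ supported on two equally likely outcomes, $(X,Y)=(x_1,y_1)$ or $(x_2,y_2)$, each with probability $1/2$. For $u_i=\log_p x_i$ and $v_i=\log_p y_i$, a direct computation from Definition \ref{def:AbuCov} gives
\[
\text{Rcov}(p)(X,Y)=\tfrac14\langle u_1-u_2,\,v_1-v_2\rangle_p .
\]
So the task is to show that the inner product $\langle \log_p x_1-\log_p x_2,\ \log_p y_1-\log_p y_2\rangle_p$ depends on $p$.

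\textbf{Step 2: the flat case is invariant, so curvature must break it.} In Euclidean space $\log_p x = x-p$, so the differences $u_1-u_2$ and $v_1-v_2$ do not involve $p$ at all. Any discrepancy must therefore come from curvature. The tool is the expansion of the logarithm map in normal coordinates centred at a base point $o$, with $p=\exp_o(w)$ for small $w$:
\[
\log_p x \approx \Gamma_o^p\Big[\,\log_o x - w + \tfrac16 R(\cdot,\cdot)\cdot + O(\|\cdot\|^4)\Big],
\]
where the cubic terms involve the curvature tensor contracted with $\log_o x$ and $w$. Parallel transport $\Gamma_o^p$ is an isometry, so it does not change the inner product. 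The $-w$ terms cancel in the differences. What remains is a curvature correction to the inner product, bilinear in $w$ and in the configuration vectors.

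\textbf{Step 3: choose a configuration where the correction is nonzero.} Let $e_1,e_2$ span a plane with $K(e_1,e_2)\neq0$. Take $x_{1,2}=\exp_o(\pm\epsilon e_1)$ and $y_{1,2}=\exp_o(\pm\epsilon e_1)$, so that $X=Y$. Compare $p=o$ with $p'=\exp_o(t e_2)$.
\begin{itemize}
\item At $p=o$, $\text{Rcov}=\epsilon^2$ exactly.
\item At $p'$, the difference $\log_{p'}x_1-\log_{p'}x_2$ has squared length $4\epsilon^2(1-\tfrac13 K t^2+\dots)$, coming from the Jacobi-field/Rauch comparison. So $\text{Rcov}(p')\approx\epsilon^2(1-\tfrac13Kt^2)\neq\epsilon^2$ for small $t\neq0$.
\end{itemize}
On $\mbS^2$ this is explicit. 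Take $o$ to be a point on the equator and $x_{1,2}$ at longitude $\pm\epsilon$ on the equator. Take $p'$ to be the north pole. Then $\|\log_{p'}x_1-\log_{p'}x_2\|=2\cdot(\pi/2)\sin\epsilon$. This gives $\text{Rcov}(p')=(\pi^2/4)\sin^2\epsilon$, which differs from $\text{Rcov}(o)=\epsilon^2$.

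\textbf{Main obstacle.} The step I expect to be hardest is making the second-order expansion of $\log_{p'}$ rigorous and getting the sign and constant of the curvature term right for general $\mcM$. I would handle this with the standard Jacobi field estimate for $d\log$, namely $d(\log_{p'})\approx I+\tfrac13 R$. Only nonvanishing of the correction is needed, not its exact value. If the general computation becomes messy, I would fall back to stating the counterexample on the sphere, and on SPD with its negative curvature, and note that the Jacobi argument transfers it to any manifold with $K\neq0$ somewhere.
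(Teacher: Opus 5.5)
Your proposal is correct once one sign is fixed, and it takes a different, constructive route.

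\textbf{Comparison with the paper.} The paper argues perturbatively for an arbitrary pair $(X,Y)$. It expands $\log_q x$ around $\Gamma_p^q(\log_p x-\xi)$ plus a curvature term, notes that the translation $-\Gamma_p^q\xi$ cancels after centring, and obtains $\tr\Sigma^{A\&P}_{X,Y}(q)=\tr\Sigma^{A\&P}_{X,Y}(p)+\tr(D_{XY})$. It then asserts without construction that some distribution makes $\tr(D_{XY})\neq0$.

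You instead exhibit a witness. Your two-point identity $\text{Rcov}(p)=\tfrac14\langle u_1-u_2,v_1-v_2\rangle_p$ is right. With $X=Y$ symmetric about $o$, you have $\text{Rcov}(o)=\epsilon^2$ exactly, and for each fixed $p'$, $\text{Rcov}(p')/\epsilon^2\to\|d(\log_{p'})_o e_1\|^2$ as $\epsilon\to0$. So non-invariance reduces to $d(\log_{p'})_o$ failing to preserve the length of $e_1$. A Jacobi-field expansion settles this as soon as one sectional curvature $K_o(e_1,e_2)$ is nonzero.

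The paper's formula is meant to cover every distribution, but it books the curvature correction as a bilinear (second-order) term with an $O(\|\cdot\|^3)$ remainder. In fact curvature first enters the base-change expansion of $\log$ at third order, as your Step~2 says. As written, the curvature effect therefore sits at the order of the paper's remainder, and it is your argument that actually produces a nonzero discrepancy.

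\textbf{The sign slip.} Your curvature correction has the wrong sign. For $w\perp v$ with $\|v\|=t$, the Jacobi-field expansion gives $\|d(\exp_{p'})_v w\|^2=\|w\|^2\bigl(1-\tfrac13Kt^2+o(t^2)\bigr)$, where $K$ is the sectional curvature of the plane of $v,w$. This is the contraction of $\exp$ under positive curvature. Since $\log_{p'}$ is its inverse, $\|d(\log_{p'})_o e_1\|^2=1+\tfrac13K_o(e_1,e_2)t^2+o(t^2)$, so $\text{Rcov}(p')\approx\epsilon^2\bigl(1+\tfrac13Kt^2\bigr)$.

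Your own sphere computation confirms this. The value $(\pi^2/4)\sin^2\epsilon>\epsilon^2$ for $0<\epsilon<\pi/2$ is an increase, whereas $1-\tfrac13Kt^2$ with $K=1$ predicts a decrease. Only $\neq$ is needed, so the conclusion survives. Fix a small $t\neq0$ first and only then let $\epsilon\to0$; then no uniformity in $t$ is required.

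\textbf{Two minor points.} First, the equatorial points lie at distance exactly $\pi/2$ from the north pole, on the boundary of the admissible ball. Taking $p'=\exp_o(te_2)$ with $0<t<\pi/2$ avoids this and gives the exact value $\text{Rcov}(p')=\sin^2\epsilon\,\theta^2/\sin^2\theta$, where $\cos\theta=\cos t\cos\epsilon$. This exceeds $\epsilon^2$ because $\theta>\epsilon$ and $s\mapsto s/\sin s$ is increasing on $(0,\pi)$.

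Second, $X=Y$ forces $\mu=\nu=o$, so your $p'$ does not lie on a geodesic joining distinct means. For a sharper contrast with Theorem~\ref{thm:ftinvar}, pair $x_{1,2}$ with $y_{1,2}=\exp_{o'}(\pm\epsilon e_1')$, where $o'=\exp_o(se_2)$ and $e_1'$ is the parallel transport of $e_1$. On $\mbS^2$ this gives $\text{Rcov}(p)/\epsilon^2\to\frac{a}{\sin a}\cdot\frac{s-a}{\sin(s-a)}$ for $p$ at distance $a\in(0,s)$ from $o$ along that geodesic, which is not constant in $a$.
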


\citet{petersen2019wasserstein} introduced a notion of covariance for univariate probability density functions endowed with the Wasserstein metric. 
Let $\mcF_1, \mcF_2$ denote two random densities with distribution functions $F_1, F_2$, and let $\bar f_i$ (with distribution function $\bar F_i$) be the Fr\'echet (Wasserstein) mean of $\mcF_i$. The optimal transport map from $\bar f_i$ to a realization $f_i$ is $T_i=F_i^{-1}\circ\bar F_i$, and its tangent representation $T_i-\mathrm{id}$ lives in the tangent space at $\bar f_i$ (formally $L^2(\bar f_i)$). The squared Wasserstein distance is induced from this inner product, $d^2(\bar f_1,\bar f_2)=\int(T(u)-u)^2\bar f_1(u)\,du$, where $T=\bar F_2^{-1}\circ \bar F_1$ is the optimal transport from $\bar f_1$ to $\bar f_2$.

\citet{petersen2019wasserstein} define the Wasserstein covariance as
\begin{align}\label{eq:wassersteinCov}
    \Sigma_{\mcF_1,\mcF_2}&=\mbE(\langle\widetilde{T}_1,T_2\rangle_{\bar{f}_2}) \nonumber \\
    &=\mbE\left(\int_{\mbR}(\widetilde{T}_1(u)-u)(T_2(u)-u)\bar{f}_2(u)du\right),
\end{align}
where $T_i=F_i^{-1}\circ{\bar{F}_i}$ is the optimal transport map and $\widetilde{T}_1=T_1\circ (\bar{F}^{-1}_1\circ \bar{F}_2)-(\bar{F}^{-1}_1\circ \bar{F}_2)+\text{id}$ 
is the parallel transport of the optimal transport map to the tangent space of $\bar{f}_2$.
\subsection{Related Literature - Correlation}
\citet{kim2014canonical} introduced a way to define the correlation as an extension of canonical correlation analysis to manifold data. We summarize their method here with more details in Appendix~\ref{app:cca}. Traditional CCA finds linear subspaces at the means of two multivariate random variables and finds projections that maximize the correlation between them. \citet{kim2014canonical} extend this idea by finding linear subspaces of tangent spaces. Let $\pi_S$ denote the projection operator onto geodesic submanifold (see Appendix~\ref{app:cca}) $S\subset \mcM$ defined as $\pi_S(x)=\argmin_{\tilde{x}\in S} d^2(x,\tilde{x})$. 

Let the data be $\{x_i,y_i\}_{i=1}^n$, $\mathbf{a}=\{a_i\}$. $\mathbf{b}=\{b_i\}$, with $\bar{a},\bar{b}$ the Euclidean means of $\mathbf{a}$ and $\mathbf{b}$, respectively, and $\mu,\nu$ the Fr\'echet means of $X$ and $Y$, respectively. Here $W_x,W_y$ are the basis tangent vectors and $\pi_{W_x}$ refers to the projection onto the space spanned by $W_x$. 
Following this idea, the generalized CCA is expressed as an optimization problem 
\begin{align*}
    \mcR^{CCA}_{X,Y}=&\text{corr}(\pi_{W_x}(x),\pi_{W_y}(y))\\
    =&\max_{W_x,W_y,\mathbf{a,b}}\frac{\sum_i(a_i-\bar{a})(b_i-\bar{b})}{\sqrt{\sum_i(a_i-\bar{a})^2}\sqrt{\sum_i(b_i-\bar{b})^2}}\\
    s.t.\quad &a_i=\argmin_{a_i\in(-\epsilon,\epsilon)}\|\log_{\exp_{\mu}(a_iW_x)}(x_i)\|^2\\
    &b_i=\argmin_{b_i\in(-\epsilon,\epsilon)}\|\log_{\exp_{\nu}(b_iW_y)}(y_i)\|^2\\
    &\forall i\in\{1,\dots,n\}.
\end{align*}

This formulation finds the tangent vectors which determine the geodesic submanifolds such that the correlation of projection coefficients is maximized. Once an appropriate $\mcR^{CCA}_{X,Y}$ is obtained, one can determine the covariance by multiplying by the standard deviation of both set of coefficients. 

\citet{abuqrais2024riemannian} also propose a generalization for a Riemannian correlation defined in terms of their covariance. They define the correlation matrix as 
$$\varrho^{A\&P}_{X,Y}(p)=\frac{\Sigma^{A\&P}_{X,Y}(p)}{\sqrt{\tr(\Sigma^{A\&P}_{X,X}(p))\tr(\Sigma^{A\&P}_{Y,Y}(p))}},$$ where $\Sigma^{A\&P}_{X,Y}(p)$ is as in Definition~\ref{def:AbuCov} and the correlation is $\mcR^{A\&P}_{X,Y}(p)=\tr(\varrho^{A\&P}_{X,Y}(p))$. The authors show that this satisfies the crucial property that $\mcR^{A\&P}_{X,Y}(p)\in[-1,1]$ \citep[Proposition 3.5]{abuqrais2024riemannian}.
\section{Intrinsic Footpoint-invariant Riemannian Cross-covariance}
Proofs for Section~\ref{ss:RCov} and Section~\ref{ss:RCorr} can be found in Appendix~\ref{ss:AdProof} and proofs for Section~\ref{ss:TheoGuar} can be found in Appendix~\ref{app:consproofs}.
\subsection{Riemannian Covariance}\label{ss:RCov}
Suppose we have two random variables $X,Y$ with unique, respective Fr\'echet means $\mu,\nu$. To ensure we have unique means we require the following assumption. Proofs for this section can be found in \ref{ss:AdProof}.

\begin{assumption}\label{A1}
    To ensure we have unique Fr\'echet means, each dataset $\{Z_1,Z_2,\dots,Z_n\}$ is bounded in an open ball $B_r(p)$ with $r\leq\frac{1}{2}\min\{\inj\mcM,\pi/\sqrt{\kappa}\}$ \citep{karcher1977riemannian,kendall1990probability}, where $\inj\mcM$ denotes the injectivity radius and $\kappa$ is an upper bound on the sectional curvature. If $\kappa\leq 0$, then $r$ must simply be finite.
\end{assumption}

Further, our definition \ref{def:OurCov} requires the use of parallel transport. This raises an additional issue of uniqueness. We elaborate on this point in \ref{app:Riem}, but note that parallel transport is generally not unique unless it is restricted to geodesics. Thus, we have, 

\begin{assumption}\label{A2}
    To ensure a unique parallel transport, we require a unique geodesic between the Fr\'echet means. Thus, we assume $d(\mu,\nu)<\inj\mcM$.
\end{assumption}

\begin{definition}[Riemannian covariance]\label{def:OurCov}
    Given random variables $X$ and $Y$ with respective Fr\'echet means $\mu$ and $\nu$, let the Riemannian cross-covariance at $p$ be defined as $$\Sigma_{X,Y}(p)=\mbE\left[(\Gamma_\mu^p\log_\mu X)(\Gamma_\nu^p\log_\nu Y)^T\right]$$ where the parallel transport is with respect to the geodesic $\alpha$, $p\in\alpha$, $\alpha:[0,1]\rightarrow\mcM$, $\alpha(0)=\mu$, and $\alpha(1)=\nu$.
    The Riemannian covariance between $X$ and $Y$ at point $p$ is thus $$\Cov_{X,Y}(p)=\tr(\Sigma_{X,Y}(p)).$$  
\end{definition}

To motivate our definition, we have the following theorem which shows that our definition is a direct generalization of the Euclidean covariance.
\begin{theorem}\label{thm:ReduceToEuclid}
    When $\mcM=\mbR^n$, the Riemannian covariance reduces to the standard Euclidean covariance. 
\end{theorem}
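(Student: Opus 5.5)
The plan is to specialize every geometric ingredient of Definition~\ref{def:OurCov} to the flat manifold $\mcM=\mbR^n$ with the standard Euclidean metric $\langle v,w\rangle_p=v^Tw$, and then check that the resulting expression is the usual cross-covariance matrix $\mbE[(X-\mbE X)(Y-\mbE Y)^T]$, whose trace is the scalar covariance $\sum_i\Cov(X_i,Y_i)$. On $\mbR^n$ Assumptions~\ref{A1} and~\ref{A2} are automatically satisfied: the curvature is zero and the injectivity radius is infinite. So the Fr\'echet means and the geodesic between them are unique, and the definition applies for any $X,Y$ with finite second moments.

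First I identify the Fr\'echet means. Since $d(x,y)=\|x-y\|$, we have $\sigma^2_X(x)=\mbE\|x-X\|^2=\|x-\mbE X\|^2+\mbE\|X-\mbE X\|^2$. This is uniquely minimized at $x=\mbE X$, so $\mu=\mbE X$ in the usual sense, and likewise $\nu=\mbE Y$.

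Second, I compute the logarithm map. Geodesics are straight lines $\alpha(t)=p+tv$, so $\exp_p v=p+v$ and hence $\log_\mu X=X-\mu$, viewed as an element of $T_\mu\mbR^n\cong\mbR^n$.

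Third, I compute parallel transport. The Levi--Civita connection of the flat metric has vanishing Christoffel symbols in Cartesian coordinates, so the condition $\frac{D}{dt}V=0$ along any curve, in particular the segment from $\mu$ to $\nu$, reduces to $V'(t)=0$. Thus $V$ is constant, and $\Gamma_\mu^p v=v$ under the canonical identification $T_q\mbR^n\cong\mbR^n$ for every $p$ on the geodesic, and in fact for any $p$.

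Substituting these three facts gives
\[
\Sigma_{X,Y}(p)=\mbE\bigl[(X-\mbE X)(Y-\mbE Y)^T\bigr],
\]
which is the Euclidean cross-covariance matrix, independent of $p$. Taking traces then gives $\Cov_{X,Y}(p)=\mbE[(X-\mbE X)^T(Y-\mbE Y)]$.

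The only step needing any care is the parallel transport step. It requires noting that the identification of all tangent spaces with $\mbR^n$ is exactly the one under which the Levi--Civita transport is the identity; I would justify this directly from the vanishing Christoffel symbols. Everything else is a routine computation.
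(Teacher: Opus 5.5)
Your proposal is correct and matches the paper's proof, which simply substitutes $\log_p q = q-p$ and $\Gamma_p^q v = v$ into Definition~\ref{def:OurCov} to get $\tr\left(\mbE\left[(X-\mu)(Y-\nu)^T\right]\right)$. Your version is more complete: you also justify $\mu=\mbE X$ and $\nu=\mbE Y$, and you justify the triviality of parallel transport via the vanishing Christoffel symbols; the paper leaves both implicit. You also note that the equality already holds at the matrix level, before taking the trace.
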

\begin{remark}
In Theorem~\ref{thm:ftinvar} we show that our definition is invariant to choice of $p$. When there is no canonical or reference point choice for $p$, we define the covariance as
$$\Sigma_{X,Y}=\mbE\left[(\Gamma_\mu^\nu\log_\mu X)(\log_\nu Y)^T\right],\qquad \Cov_{X,Y}=\tr(\Sigma_{X,Y}).$$

In addition, our Riemannian covariance connects to the Wasserstein covariance (Eq.~\ref{eq:wassersteinCov}) by defining an operator-valued covariance in a common tangent space, whose trace yields a scalar dependence measure. In the one-dimensional Wasserstein setting, the covariance is defined directly as an 
$L^2$ inner product, which coincides with the trace of this operator under the flat geometry.
\end{remark}

\begin{theorem}[Footpoint Invariance]\label{thm:ftinvar}
    We have $\Cov_{X,Y}(p)=\Cov_{X,Y}(q)$ for all $p,q\in\alpha$ where $\alpha$ is the geodesic joining $\mu$ and $\nu$.
\end{theorem}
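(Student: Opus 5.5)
The plan is to reduce the statement to two standard facts: parallel transport along the fixed geodesic $\alpha$ is an isometry, and transport along a single curve is compatible with composition.

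First I will make the trace intrinsic. The matrix $(\Gamma_\mu^p u)(\Gamma_\nu^p w)^T$ is really the rank-one tensor $u'\otimes w'$ on $T_p\mcM$, with $u'=\Gamma_\mu^p u$ and $w'=\Gamma_\nu^p w$. Its trace, taken with respect to an orthonormal basis of $(T_p\mcM, g_p)$, equals the inner product $\langle u',w'\rangle_p$. By linearity of expectation and trace,
$$\Cov_{X,Y}(p)=\mbE\left[\left\langle \Gamma_\mu^p\log_\mu X,\ \Gamma_\nu^p\log_\nu Y\right\rangle_p\right].$$
I will note that this identification requires the coordinates on $T_p\mcM$ to be orthonormal for $g_p$. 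This is implicit in Definition~\ref{def:OurCov} and is the same convention used for $\tr(\Sigma^{Pennec}_{X,X})=\sigma^2_X$.

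Next I will use composition of transports along $\alpha$. Take $p=\alpha(s)$ and $q=\alpha(t)$. Parallel transport along a fixed curve is determined by the linear ODE $\frac{D}{dt}V=0$, and its solutions are unique (the uniqueness fact cited in the background). Hence transport along the restriction of $\alpha$ satisfies
$$\Gamma_\mu^q=\Gamma_p^q\circ\Gamma_\mu^p \quad\text{and}\quad \Gamma_\nu^q=\Gamma_p^q\circ\Gamma_\nu^p,$$
where every map is transport along the appropriate sub-arc of $\alpha$, traversed in the appropriate direction. The parallel field starting at $\mu$ evaluated at $q$ coincides with the parallel field through its value at $p$. Reversing the orientation gives the inverse map. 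Assumption~\ref{A2} guarantees that $\alpha$ is the unique minimizing geodesic, so all these transports are well defined and unambiguous.

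Finally, since $\Gamma_p^q$ is an isometry $T_p\mcM\to T_q\mcM$,
$$\left\langle \Gamma_\mu^q\log_\mu X,\Gamma_\nu^q\log_\nu Y\right\rangle_q=\left\langle \Gamma_p^q\Gamma_\mu^p\log_\mu X,\Gamma_p^q\Gamma_\nu^p\log_\nu Y\right\rangle_q=\left\langle \Gamma_\mu^p\log_\mu X,\Gamma_\nu^p\log_\nu Y\right\rangle_p$$
holds pointwise. Taking expectations yields $\Cov_{X,Y}(q)=\Cov_{X,Y}(p)$. Integrability follows from Assumption~\ref{A1}, since the $\log$ vectors are bounded by the radius $r$.

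I expect the main obstacle to be the first step, namely justifying that the trace of the matrix equals the metric inner product independently of the chosen basis. I will handle it by fixing an orthonormal frame $\{e_i\}$ at $\mu$ and transporting it along $\alpha$. This gives orthonormal frames at every point of $\alpha$, and in these frames the matrix representation of $\Gamma_p^q$ is the identity. With this choice the full matrix $\Sigma_{X,Y}(p)$ is constant along $\alpha$, which is a stronger statement that immediately implies the trace result.
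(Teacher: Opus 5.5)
Your proposal is correct and is essentially the paper's argument. The paper also reads the trace in an orthonormal basis $\{e_i\}$ of $T_p\mcM$, pushes everything (basis included) forward by $\Gamma_p^q$, uses $\Gamma_p^q\Gamma_\mu^p=\Gamma_\mu^q$ and $\Gamma_p^q\Gamma_\nu^p=\Gamma_\nu^q$ along the common geodesic, and concludes by the isometry property. This matches your route of identifying $\tr$ with $\langle\cdot,\cdot\rangle_p$ and then applying composition and isometry. Your parallel-frame remark, that the whole matrix is constant along $\alpha$ in transported orthonormal frames, is a harmless sharpening. The only slip is cosmetic: under Assumption~\ref{A1} the log vectors are bounded by $2r$ rather than $r$.
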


Our definition is similar to that proposed by \citet{abuqrais2024riemannian}, however, we utilize a parallel transport to achieve a common tangent space rather than utilize the tangent space at some midpoint. The use of parallel transport is necessary as $\log_{\mu*}$ and $\log_{\nu*}$ push the densities forward onto different tangent spaces, so the outer (and inner) product is not well-defined. This issue is not present in the Euclidean setting, as the tangent space at every point is the space itself, $T_p\mbR^d\cong \mbR^d$. Further, note that the vectors $\Gamma_\mu^p\log_\mu X$ and $\Gamma_\nu^p\log_\nu Y$ have length equal to the distance between the data and their respective mean: $\|\Gamma_\mu^p\log_\mu X\|_p=\|\log_\mu X\|_\mu=d(\mu,X)$. So, our definition is a \textit{centered} second moment. Definition \ref{def:AbuCov} centers the second moment by subtracting the ``mean" e.g., $\mbE\left[\log_pX\right]$. However, $\mbE\left[\log_pX\right]\ne\log_p\mbE\left[X\right]$, generally, i.e., the mean of the pushforward density onto the tangent space at an arbitrary point is not necessarily the $\log$ of the Fr\'echet mean of the density. 

\begin{proposition} \label{prop:prop1}
Let $\mathcal{H}$ denote the Hilbert space of Hilbert-Schmidt operators on $T_\nu M$, equipped with the inner product $\langle A,B\rangle_\mathcal{H}:=\mathrm{tr}(AB^{\ast})$. Then the Riemannian covariance between $X$ and $Y$ equals the Hilbert-Schmidt
inner product between the transported intrinsic covariance operator $\widetilde{C}_{X,Y} =\mathbb{E}\bigl[(\Gamma^{\nu}_{\mu}\log_\mu X)\otimes\log_\nu Y\bigr] \in\mathcal{H}$ and the identity operator $\mathrm{id}_{T_\nu M}$.
\end{proposition}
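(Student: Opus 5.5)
The plan is to reduce the statement to the identity $\mathrm{tr}(u\otimes v)=\langle u,v\rangle_\nu$ for rank-one operators on the finite-dimensional inner product space $T_\nu\mcM$, and then pass the expectation through the trace.

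\textbf{Conventions.} For $u,v\in T_\nu\mcM$, interpret $u\otimes v$ as the operator $w\mapsto \langle v,w\rangle_\nu\,u$. In an orthonormal basis of $T_\nu\mcM$ this is exactly the matrix $uv^T$, so $\widetilde C_{X,Y}$ coincides with the matrix $\Sigma_{X,Y}=\Sigma_{X,Y}(\nu)$ of the Remark following Definition~\ref{def:OurCov}.

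\textbf{Membership in $\mathcal H$.} Since $T_\nu\mcM$ is $d$-dimensional, every linear operator on it is Hilbert--Schmidt. Integrability follows from the paper's setup. Parallel transport is an isometry, so
\[
\|u\otimes v\|_{\mathcal H}=\|\Gamma_\mu^\nu\log_\mu X\|_\nu\,\|\log_\nu Y\|_\nu=d(\mu,X)\,d(\nu,Y).
\]
Under Assumptions~\ref{A1}--\ref{A2} the data lie in bounded balls, so both distances are bounded. The random operator is therefore Bochner integrable, and $\widetilde C_{X,Y}\in\mathcal H$ is well defined. Assumption~\ref{A2} also guarantees that $\Gamma_\mu^\nu$ is taken along the unique geodesic, so the integrand is a measurable function of $X$.

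\textbf{Main computation.} The identity is self-adjoint, so $\mathrm{id}^\ast=\mathrm{id}$ and
\[
\langle \widetilde C_{X,Y},\mathrm{id}\rangle_{\mathcal H}=\mathrm{tr}(\widetilde C_{X,Y}).
\]
Trace is a bounded linear functional on the finite-dimensional space $\mathcal H$, so it commutes with the Bochner expectation:
\[
\mathrm{tr}\,\mbE[u\otimes v]=\mbE[\mathrm{tr}(u\otimes v)].
\]
Fix an orthonormal basis $\{e_k\}$ of $T_\nu\mcM$ and compute
\[
\mathrm{tr}(u\otimes v)=\sum_k\langle v,e_k\rangle_\nu\langle u,e_k\rangle_\nu=\langle u,v\rangle_\nu .
\]
This gives
\[
\langle \widetilde C_{X,Y},\mathrm{id}\rangle_{\mathcal H}=\mbE\bigl[\langle\Gamma_\mu^\nu\log_\mu X,\log_\nu Y\rangle_\nu\bigr]=\tr(\Sigma_{X,Y})=\Cov_{X,Y}.
\]
By Theorem~\ref{thm:ftinvar}, this also equals $\Cov_{X,Y}(p)$ for any $p$ on the geodesic $\alpha$.

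The only step needing care is this justification of exchanging trace and expectation, together with the integrability bound above. Once the boundedness from Assumption~\ref{A1} is invoked, it is routine, and the rest is linear algebra.
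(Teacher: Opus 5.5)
Your proposal is correct and follows essentially the same route as the paper: you realize $\widetilde C_{X,Y}$ as the Bochner expectation of the rank-one operators $U\otimes V$, pull the bounded functional $A\mapsto\langle A,\mathrm{id}\rangle_{\mathcal H}=\tr(A)$ through the expectation, and evaluate $\tr(U\otimes V)=\langle U,V\rangle_\nu$; the paper instead obtains this last identity by specializing the more general pairing $\langle u\otimes v,A\rangle_{\mathcal H}=\langle u,Av\rangle_\nu$ to $A=\mathrm{id}$, which is equivalent. The only substantive difference is how integrability is justified: the paper uses the finite second moments in (A2) and Cauchy--Schwarz, $\mathbb{E}\|U\otimes V\|_{\mathcal H}\le(\mathbb{E}\|U\|_\nu^2)^{1/2}(\mathbb{E}\|V\|_\nu^2)^{1/2}$, which does not require the bounded support you invoke, although your boundedness argument is valid under the paper's standing assumptions.
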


We characterize the behavior of our estimator in Proposition~\ref{prop:properties}. The proposed Definition~\ref{def:OurCov} satisfies some standard results comparable to those of the Euclidean covariance such as generalization of the variance, symmetry, and equaling $0$ for independent random variables.

\begin{proposition}\label{prop:properties}
The Riemannian Covariance as in \ref{def:OurCov} has the following properties:
    \begin{enumerate}
        \item 
        $\Cov_{X,X}(\mu)=\sigma^2_X$ (Fr\'echet variance special case);
        \item $\Cov_{X,Y}(p)=\Cov_{Y,X}(p)$ (Symmetry);
        \item if $\Cov_{X,X}(\mu)=0$, then $X$ is degenerate (Degeneracy);
        \item  if $X$ and $Y$ are independent, then $\Cov_{X,Y}(p)=0$ (Uncorrelatedness);
        \item for a fixed point $q\in\mcM$, $\Cov_{X,q}(q)=0$ (Variance of a constant);
        \item $\Cov_{(a,X),Y}(p)=a\Cov_{X,Y}(p)$ where $(a,X)$ is $\exp_\mu(a\log_\mu(X))$ and $a\in\mbR$ (Scaling).
    \end{enumerate}
\end{proposition}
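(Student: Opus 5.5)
We verify the six items one at a time. Throughout we use three facts: the trace identity $\tr(uv^T)=\langle u,v\rangle_p$; the fact that parallel transport along the geodesic $\alpha$ is a linear isometry that commutes with expectation; and Theorem~\ref{thm:ftinvar}, which lets us move the footpoint $p$ anywhere on $\alpha$. With these, $\Cov_{X,Y}(p)=\mbE\langle \Gamma_\mu^p\log_\mu X,\Gamma_\nu^p\log_\nu Y\rangle_p$. Nearly every item then reduces to a statement about inner products of tangent vectors.

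\emph{Items 1 and 2.} For item 1, take $Y=X$, so $\nu=\mu$ and $\alpha$ is the constant path. Then $\Gamma_\mu^\mu=\mathrm{id}$, and
\[
\Cov_{X,X}(\mu)=\mbE\|\log_\mu X\|_\mu^2=\mbE\, d^2(\mu,X)=\sigma_X^2 .
\]
For item 2, use $\tr(A^T)=\tr(A)$ together with the symmetry of $g_p$. Reversing the geodesic gives the same point set and the same transport maps, so $\Sigma_{Y,X}(p)=\Sigma_{X,Y}(p)^T$ and the traces agree.

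\emph{Items 3 and 5.} For item 3, by item 1 we have $\mbE\, d^2(\mu,X)=0$, so $X=\mu$ almost surely. For item 5, the random variable $q$ has Fr\'echet mean $q$, and $\log_q q=0$. The integrand is therefore identically zero, using $p=q$ on the degenerate geodesic, or any $p$ by Theorem~\ref{thm:ftinvar}.

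\emph{Item 4.} Independence makes the expectation of the outer product factor:
\[
\Sigma_{X,Y}(p)=\bigl(\Gamma_\mu^p\,\mbE[\log_\mu X]\bigr)\bigl(\Gamma_\nu^p\,\mbE[\log_\nu Y]\bigr)^T .
\]
Here the transports are deterministic linear maps, because $\mu,\nu,p$ are fixed, so they pass through the expectation. It remains to show $\mbE[\log_\mu X]=0$. This is the first-order condition for the Fr\'echet mean. Under Assumption~\ref{A1}, $d^2(\cdot,x)$ is differentiable on the support ball with gradient $-2\log_\cdot x$. Differentiating under the integral, which is justified by compactness and boundedness, the gradient of $\sigma_X^2$ at its minimizer $\mu$ is $-2\,\mbE[\log_\mu X]=0$. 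I expect this step to be the main obstacle: it needs careful justification that the minimizer is interior and that differentiation may be exchanged with the integral, using the ball-radius bound of Assumption~\ref{A1}.

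\emph{Item 6.} This is the place where the definition of $(a,X)$ requires care. First we must identify the Fr\'echet mean of $X_a:=\exp_\mu(a\log_\mu X)$. Since $\mbE[\log_\mu X_a]=a\,\mbE[\log_\mu X]=0$, the point $\mu$ satisfies the first-order condition. Assuming $X_a$ still satisfies Assumption~\ref{A1} (which needs $|a|$ small enough, or the resulting ball to remain admissible), uniqueness gives that $\mu$ is its mean. Moreover $\log_\mu X_a=a\log_\mu X$ on the injectivity domain. Linearity of $\Gamma_\mu^p$, the geodesic $\alpha$ being unchanged, and linearity of trace and expectation then yield $\Cov_{X_a,Y}(p)=a\Cov_{X,Y}(p)$. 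For $a<0$ we would note that $a\log_\mu X$ stays in the ball, since only its norm matters.
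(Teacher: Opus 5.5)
Your proposal is correct and matches the paper's proof item by item: identity transport and trace-as-squared-norm for items 1 and 3, transposing the outer product for item 2, the first-order condition $\mbE[\log_\mu X]=\mbE[\log_\nu Y]=0$ with factorization under independence for item 4, $\log_q q=0$ for item 5, and the first-order condition plus uniqueness under Assumption~\ref{A1} to keep $\mu$ as the mean of $(a,X)$ in item 6. Two small slips do not affect the argument. In item 5 the geodesic from $\mu$ to $q$ is not degenerate in general, but $\Gamma_q^p 0=0$ for every $p$, so no footpoint argument is needed. In item 6 your claim that $a\log_\mu X$ stays in the ball for $a<0$ holds only if the ball is centered at $\mu$; geodesic convexity does give it for $a\in[0,1]$, so it is safer simply to assume Assumption~\ref{A1} for $(a,X)$, as both you and the paper already do.
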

\begin{remark}
    As multiplication is not defined on Riemannian manifolds, item~6 of Proposition~\ref{prop:properties} requires us to define what it means to scale a random variable. We propose a definition and mention the implications with regard to our assumptions in the proof. Further, items~6 and~2 in conjunction with~1 yield $\Cov_{(a,X),(a,X)}(\mu)=a^2\Cov_{X,X}(\mu)=a^2\sigma^2_X$.
\end{remark}

\subsection{Riemannian Correlation}\label{ss:RCorr}

Given the definition of Riemannian covariance, we introduce the notion of Riemannian correlation. This notion generalizes the Pearson correlation from Euclidean spaces to the manifold setting. 

\begin{definition}[Riemannian correlation]\label{def:corr}
    Let $X$ and $Y$ be random variables with respective means $\mu$ and $\nu$ satisfying Assumption \ref{A2}. Let $\alpha$ be the geodesic with $\alpha(0)=\mu$ and $\alpha(1)=\nu$. For some $p\in\alpha$, the Riemannian cross-correlation at point $p$ is defined as $$\varrho_{X,Y}(p)=\frac{\Sigma_{X,Y}(p)}{\sqrt{\Cov_{X,X}(\mu)}\sqrt{\Cov_{Y,Y}(\nu)}},$$
    and the corresponding Riemannian correlation coefficient is given by $$\mcR_{X,Y}(p)=\tr(\varrho_{X,Y}(p))=\frac{\Cov_{X,Y}(p)}{\sqrt{\Cov_{X,X}(\mu)}\sqrt{\Cov_{Y,Y}(\nu)}}.$$
\end{definition}

Here, $\Cov_{X,X}(\mu)$ and $\Cov_{Y,Y}(\nu)$ are computed at $\mu$ and $\nu$, respectively. However, benefiting from invariance under parallel transport, these quantities can be evaluated at any point. The properties of the Riemannian correlation, Proposition~\ref{prop:corrproperties}, parallel those of the Riemannian covariance (i.e., Proposition~\ref{prop:properties}). A fundamental property of Riemannian correlation is its boundedness. Therefore, we have the following result, which is analogous to its Euclidean counterpart.
\begin{proposition}\label{prop:RieCor}
    We have that $\mcR_{X,Y}(p)\in [-1,1]$.
\end{proposition}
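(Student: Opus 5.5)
The plan is to reduce the claim to the Cauchy--Schwarz inequality, applied twice: once pointwise in the tangent space $T_p\mcM$ and once in $L^2$ of the underlying probability space. First we rewrite the numerator. By linearity of trace and expectation, and since $\tr(uv^T)=\langle u,v\rangle_p$ for $u,v\in T_p\mcM$ (expressed in an orthonormal basis of $T_p\mcM$, so that the trace of the outer product is the Riemannian inner product),
\begin{align*}
\Cov_{X,Y}(p)=\mbE\left[\langle U,V\rangle_p\right],\qquad U:=\Gamma_\mu^p\log_\mu X,\quad V:=\Gamma_\nu^p\log_\nu Y .
\end{align*}
Assumptions~\ref{A1} and~\ref{A2} guarantee that $\mu,\nu$ are unique, that the logarithm maps are defined almost surely, and that the transport along the unique geodesic $\alpha$ is well defined. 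Compactness, or the boundedness in Assumption~\ref{A1}, makes $U$ and $V$ bounded, so all expectations are finite.

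Next we identify the denominator. By item~1 of Proposition~\ref{prop:properties}, $\Cov_{X,X}(\mu)=\sigma_X^2=\mbE[d^2(\mu,X)]=\mbE\|\log_\mu X\|_\mu^2$, and likewise $\Cov_{Y,Y}(\nu)=\mbE\|\log_\nu Y\|_\nu^2$. Because parallel transport is an isometry, $\|U\|_p=\|\log_\mu X\|_\mu$ and $\|V\|_p=\|\log_\nu Y\|_\nu$. Hence the denominator equals $\sqrt{\mbE\|U\|_p^2}\sqrt{\mbE\|V\|_p^2}$.

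The main step is then
\begin{align*}
|\mbE\langle U,V\rangle_p|\le \mbE\left[\|U\|_p\|V\|_p\right]\le \sqrt{\mbE\|U\|_p^2}\sqrt{\mbE\|V\|_p^2}.
\end{align*}
The first inequality is the pointwise Cauchy--Schwarz inequality in the inner product space $(T_p\mcM,g_p)$ combined with $|\mbE Z|\le\mbE|Z|$. The second is Cauchy--Schwarz for the real random variables $\|U\|_p$ and $\|V\|_p$. Dividing by the denominator gives $\mcR_{X,Y}(p)\in[-1,1]$.

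The only real obstacle is bookkeeping rather than analysis. One must check that the trace of the matrix $\Sigma_{X,Y}(p)$ really is the metric inner product, which requires orthonormal coordinates at $p$; alternatively, Proposition~\ref{prop:prop1} gives this coordinate-free. One must also check that the two variance terms, although evaluated at the different points $\mu$ and $\nu$, match the transported norms, which follows from the isometry property above. Finally, we assume both variances are nonzero so that $\mcR_{X,Y}(p)$ is defined. By item~3 of Proposition~\ref{prop:properties}, a zero variance means a degenerate variable, in which case the correlation is undefined and excluded.
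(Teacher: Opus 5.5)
Your proposal is correct and follows the paper's proof essentially step for step. The paper also moves the trace inside the expectation and bounds $|\mbE[\cdot]|\le\mbE|\cdot|$. It then applies Cauchy--Schwarz pointwise in $T_p\mcM$ and again in $L^2$, and uses the isometry of parallel transport to identify the resulting second moments with $\Cov_{X,X}(\mu)$ and $\Cov_{Y,Y}(\nu)$. Your remarks on working in orthonormal coordinates at $p$ and on excluding degenerate variables, where the denominator vanishes, are sensible details that the paper leaves implicit.
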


The properties of the Riemannian correlation, Proposition~\ref{prop:corrproperties}, parallel those of the Riemannian covariance (i.e., Proposition~\ref{prop:properties}).

\begin{proposition}\label{prop:corrproperties}
    The Riemannian correlation coefficient as in \ref{def:corr} has the following properties:
    \begin{enumerate}
        \item $\mcR_{X,Y}(p)=\mcR_{X,Y}(q)$ (Invariance);
        \item $\mcR_{X,Y}(p)=\mcR_{Y,X}(p)$ (Symmetry);
        \item If $X$ and $Y$ are independent, then $\mcR_{X,Y}(p)=0$ (Uncorrelatedness);
        \item $\mcR_{(a,X),Y}(p)=\text{sign}(a)\mcR_{X,Y}(p)$ (Scale invariance);
        \item If X is nondegenerate, $\mcR_{X,X}(p)=1$ (Self-correlation).
    \end{enumerate}
\end{proposition}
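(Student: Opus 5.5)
Each of the five items reduces to a property of the Riemannian covariance already established. The key observation is that the denominator $\sqrt{\Cov_{X,X}(\mu)}\sqrt{\Cov_{Y,Y}(\nu)}=\sigma_X\sigma_Y$ is a footpoint-free quantity. By item~1 of Proposition~\ref{prop:properties}, it is the product of the Fréchet standard deviations. So every statement about $\mcR$ becomes a statement about the numerator $\Cov_{X,Y}(p)$, together with a check of how $\sigma_X$ changes under the relevant operation. Throughout I assume $X$ and $Y$ are nondegenerate, so the denominator is nonzero; by item~3 of Proposition~\ref{prop:properties}, degeneracy is exactly when it vanishes.

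\textbf{Invariance and symmetry.} For item~1, Theorem~\ref{thm:ftinvar} gives $\Cov_{X,Y}(p)=\Cov_{X,Y}(q)$ for $p,q\in\alpha$, and the denominator does not depend on $p$. For item~2, symmetry of the numerator is item~2 of Proposition~\ref{prop:properties}, and the denominator is symmetric in $(X,\mu)\leftrightarrow(Y,\nu)$. I would remark that swapping $X,Y$ reverses the geodesic $\alpha$, but the trace is unaffected because parallel transport along the reversed geodesic is the inverse isometry.

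\textbf{Uncorrelatedness and self-correlation.} Item~3 is immediate from item~4 of Proposition~\ref{prop:properties}. For item~5, take $Y=X$, so $\nu=\mu$ and $\alpha$ is constant. Then $\Cov_{X,X}(p)=\Cov_{X,X}(\mu)=\sigma_X^2$ by item~1 and invariance, and the ratio is $\sigma_X^2/\sigma_X^2=1$.

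\textbf{Scale invariance (the main obstacle).} By item~6 of Proposition~\ref{prop:properties}, the numerator scales by $a$. I must show the denominator scales by $|a|$, which is the one point that is not purely formal. First I need that the Fréchet mean of $(a,X)=\exp_\mu(a\log_\mu X)$ is still $\mu$. I would argue this as in the proof of item~6: under Assumption~\ref{A1} applied to the scaled variable, the mean is characterized by the first-order condition $\mbE[\log_\mu (a,X)]=0$. Here $\log_\mu(a,X)=a\log_\mu X$ because $a\log_\mu X$ stays within the injectivity radius, and $a\,\mbE[\log_\mu X]=0$. This gives mean $\mu$, and then $\sigma^2_{(a,X)}=\mbE\|a\log_\mu X\|^2=a^2\sigma_X^2$, matching the remark after Proposition~\ref{prop:properties}. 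Since the mean is unchanged, the geodesic $\alpha$ is unchanged too. Hence
$$\mcR_{(a,X),Y}(p)=\frac{a\Cov_{X,Y}(p)}{|a|\sigma_X\sigma_Y}=\mathrm{sign}(a)\mcR_{X,Y}(p)$$
for $a\neq0$. The case $a=0$ is excluded, or read as degenerate, consistent with $\mathrm{sign}(0)=0$ and the numerator vanishing.
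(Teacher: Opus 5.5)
Your proposal is correct and is essentially the paper's argument: the paper gives no separate proof of this proposition, only noting that these properties parallel Proposition~\ref{prop:properties}, and your item-by-item reduction to Theorem~\ref{thm:ftinvar} and items 1, 2, 4, 6 of Proposition~\ref{prop:properties} (with the denominator $|a|\sigma_X\sigma_Y$ from the remark after that proposition) is exactly that reduction. A minor aside: the symmetry step needs no inverse-transport argument, since $\Gamma_\mu^p$ and $\Gamma_\nu^p$ are the same maps whichever orientation of $\alpha$ is used, so $\Sigma_{Y,X}(p)=\Sigma_{X,Y}(p)^\top$ and the traces agree directly.
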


\subsection{Consistency and Asymptotic Normality}\label{ss:TheoGuar}
Given i.i.d.\ samples $\{(X_i,Y_i)\}_{i=1}^n$ and sample Fr\'echet means $\hat\mu,\hat\nu$, we first define the plug-in estimator
\begin{align*}
\widehat{\Sigma}_{X,Y}(p)\coloneq 
\frac1n\sum_{i=1}^n \hat U_i \hat V_i^\top,
\end{align*}
where $\hat U_i\coloneq \Gamma_{\hat\mu}^p \log_{\hat\mu}(X_i)$ and $\hat V_i\coloneq \Gamma_{\hat\nu}^p \log_{\hat\nu}(Y_i)$ at point $p$.
We also define the scalar covariance
\(
\Cov_{X,Y}(p)\coloneq \tr(\Sigma_{X,Y}(p))
\)
and its estimator $\widehat\Cov_{X,Y}(p)\coloneq \tr(\widehat\Sigma_{X,Y}(p))$.

\begin{assumption}[Regularity conditions]
    We impose the following regularity conditions:
        \begin{enumerate}[label=(A\arabic*)]
            \item 
            There exists a geodesically convex open set $\U\subset\mcM$ containing $\mu,\nu,p$ such that:
            (i) $\U$ is contained in a normal neighborhood of each point in $\U$,
            (ii) sectional curvature on $\U$ is bounded, and
            (iii) the injectivity radius on $\U$ is bounded away from $0$.
            
            \item 
            $\mbE\|\log_\mu(X)\|_\mu^2<\infty$ and $\mbE\|\log_\nu(Y)\|_\nu^2<\infty$.
            
            \item 
            $\mathbb{P}(X \in \U,\, Y \in \U) \geq 1 - \delta$ for some $\delta \geq 0$ (so the argument applies after truncation).
            
            \item
            $\mu$ and $\nu$ are unique Fr\'echet means and the population Fr\'echet variance functions are twice continuously differentiable on $\U$.
            Let
            \[
            \begin{aligned}
            H_X &\coloneq  \nabla^2 \Big( x\mapsto \mbE[d^2(X,x)] \Big)\Big|_{x=\mu},\\
            H_Y &\coloneq  \nabla^2 \Big( y\mapsto \mbE[d^2(Y,y)] \Big)\Big|_{y=\nu}.
            \end{aligned}
            \]
            Assume $H_X$ and $H_Y$ are positive definite.
            
            \item
            $\mbE\|\log_\mu(X)\|_\mu^4<\infty$ and $\mbE\|\log_\nu(Y)\|_\nu^4<\infty$.
            
            \item
            Define transport maps into $T_p\mcM$:
            \[
            \phi(x)\coloneq \Gamma_\mu^p\log_\mu(x),\qquad
            \psi(y)\coloneq \Gamma_\nu^p\log_\nu(y),
            \]
            and their plug-in counterparts
            \[
            \hat\phi_{\hat\mu}(x)\coloneq \Gamma_{\hat\mu}^p\log_{\hat\mu}(x),\qquad
            \hat\psi_{\hat\nu}(y)\coloneq \Gamma_{\hat\nu}^p\log_{\hat\nu}(y).
            \]
            Assume $\phi,\psi$ are $C^2$ on $\U$ and the maps
            $m\mapsto \Gamma_m^p\log_m(z)$ are $C^2$ jointly in $(m,z)\in\U\times\U$ with uniformly bounded first derivatives on $\U\times\U$.
            (Under (A1) this holds for the Levi--Civita connection on compact subsets of $\U$.)
            \end{enumerate}
\end{assumption}

\begin{remark}
Assumptions (A1)--(A4) are the standard regularity conditions under which sample Fr\'echet means satisfy strong consistency and $\sqrt{n}$-asymptotics.
Assumptions (A5)--(A6) ensure that the plug-in covariance admits a first-order expansion.
\end{remark}

Under these conditions, the following two lemmas will be used to establish the consistency and asymptotic properties of the estimators.

\begin{lemma}[Strong consistency of sample Fr\'echet means (\citealp{bhattacharya2003large}, Thm.~2.3)]\label{lem:mean_consistency}
Under (A1)--(A4), $\hat\mu\to \mu$ almost surely and $\hat\nu\to\nu$ almost surely.
\end{lemma}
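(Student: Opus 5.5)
Lemma~\ref{lem:mean_consistency} is cited from \citet{bhattacharya2003large}, so my plan is to verify that (A1)--(A4) imply the hypotheses of their Theorem~2.3 rather than to re-derive it. I treat $X$ and $\hat\mu$; the argument for $Y$ and $\hat\nu$ is identical.

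The approach is a Wald-type argmin consistency argument for the sample Fr\'echet function $F_n(x)=\frac1n\sum_i d^2(X_i,x)$. The population counterpart is $F(x)=\mbE\,d^2(X,x)$. I would carry out the key steps in this order.

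\begin{enumerate}
\item \emph{Finiteness.} Since $\mcM$ is compact, $d$ is bounded by $\mathrm{diam}(\mcM)$. Hence $F$ is finite and continuous everywhere, and (A2) is automatically compatible with this.
\item \emph{Uniform convergence.} The triangle inequality gives $|d^2(z,x)-d^2(z,x')|\le 2\,\mathrm{diam}(\mcM)\,d(x,x')$. So the family $\{d^2(\cdot,x)\}_{x\in\mcM}$ is uniformly Lipschitz in $x$ with a constant envelope. A finite $\varepsilon$-net of compact $\mcM$, combined with the pointwise strong law of large numbers on the net, then yields $\sup_{x\in\mcM}|F_n(x)-F(x)|\to0$ almost surely.
\item \emph{Well-separated minimum.} By (A4), $\mu$ is the unique minimizer of $F$. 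By continuity and compactness, for every $\varepsilon>0$ we get $\eta(\varepsilon)=\inf_{d(x,\mu)\ge\varepsilon}F(x)-F(\mu)>0$.
\item \emph{Argmin consistency.} Any sample mean $\hat\mu$ satisfies $F_n(\hat\mu)\le F_n(\mu)$. This gives $F(\hat\mu)-F(\mu)\le 2\sup|F_n-F|$, which is eventually smaller than $\eta(\varepsilon)$ almost surely. Therefore $d(\hat\mu,\mu)<\varepsilon$ eventually, and since $\varepsilon$ is arbitrary (take a countable sequence), $\hat\mu\to\mu$ almost surely.
\end{enumerate}

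The argument shows that every measurable selection from the sample Fr\'echet mean set converges, which is exactly Bhattacharya--Patrangenaru's formulation. It also works even if sample means are non-unique.

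The main obstacle I expect is the role of (A1) and (A3). They are not needed for the statement above, but they are needed to interpret $\hat\mu$ as lying in $\U$ for later use, so that $\log_{\hat\mu}$ and the transport are well defined. For this I would argue that $\mu\in\U$ with $\U$ open, so almost-sure convergence puts $\hat\mu\in\U$ for all large $n$.

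If $\mcM$ were only complete rather than compact, I would replace step 2 with a coercivity argument. The finite second moment in (A2) gives $F(x)\to\infty$ as $d(x,\mu)\to\infty$. This confines $\hat\mu$ to a compact ball eventually, after which steps 2--4 apply on that ball.
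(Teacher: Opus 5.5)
Your argument is correct. The paper gives no proof of its own here; it only invokes Theorem~2.3 of Bhattacharya--Patrangenaru. What you have written is a self-contained version of the reasoning behind that theorem, specialized to the paper's standing assumption that $\mcM$ is compact. It has three ingredients: a uniform strong law for $F_n$ (from the bound $|d^2(z,x)-d^2(z,x')|\le 2\,\mathrm{diam}(\mcM)\,d(x,x')$ and a finite net), a well-separated unique minimizer, and the basic inequality $F(\hat\mu)-F(\mu)\le 2\sup_x|F_n(x)-F(x)|$.

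Your route buys transparency. It shows that only compactness and the uniqueness part of (A4) are actually used. (A2) is automatic, and (A1), (A3) and the smoothness/Hessian part of (A4) play no role in this lemma; (A1) enters only afterwards, to place $\hat\mu$ in $\U$ eventually, as you note.

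The citation buys generality. The cited result holds on any metric space whose closed bounded sets are compact, hence on every complete Riemannian manifold. This matters because the paper's own SPD example is not compact. Your final paragraph sketches that extension, and two points need tightening to make it airtight:
\begin{enumerate}
\item The confinement has to be done at the sample level, not only via coercivity of $F$. For example, use $F_n(x)\ge\bigl(d(x,\mu)-\frac1n\sum_i d(X_i,\mu)\bigr)^2$ whenever $d(x,\mu)\ge\frac1n\sum_i d(X_i,\mu)$, combined with $F_n(\hat\mu)\le F_n(\mu)\to F(\mu)$ almost surely. This places $\hat\mu$ in a fixed ball $\bar B(\mu,R)$ eventually.
\item On that ball, the Lipschitz envelope in your Step~2 is $2\bigl(d(z,\mu)+R\bigr)$ rather than a constant. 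It is integrable by (A2), so the net argument still goes through.
\end{enumerate}
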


\begin{lemma}[$\sqrt{n}$ expansion of sample Fr\'echet means (\citealp{bhattacharya2005large}, Thm.~2.1)]\label{lem:mean_clt}
Under (A1)--(A5), in $T_\mu\mcM$ and $T_\nu\mcM$ respectively,
\begin{align}
\sqrt{n}\,\log_\mu(\hat\mu) &= H_X^{-1}\,\frac{1}{\sqrt{n}}\sum_{i=1}^n \xi_i + o_p(1),\label{eq:mu_clt}\\
\sqrt{n}\,\log_\nu(\hat\nu) &= H_Y^{-1}\,\frac{1}{\sqrt{n}}\sum_{i=1}^n \zeta_i + o_p(1),\label{eq:nu_clt}
\end{align}
where $\xi_i\in T_\mu\mcM$ and $\zeta_i\in T_\nu\mcM$ are i.i.d.\ mean-zero random vectors with finite second moments.
Moreover, $\frac1{\sqrt{n}}\sum\xi_i \overset{d}{\to} \Normal(0,\Omega_X)$ and $\frac1{\sqrt{n}}\sum\zeta_i \overset{d}{\to} \Normal(0,\Omega_Y)$.
\end{lemma}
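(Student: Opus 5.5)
The plan is the standard M-estimation (Z-estimator) argument of \citet{bhattacharya2005large}, carried out in normal coordinates at $\mu$; the statement for $\hat\nu$ is identical with $(X,\mu,H_X)$ replaced by $(Y,\nu,H_Y)$. By (A1) and (A3), after truncation we may assume $X\in\U$ almost surely. Since $\U$ is geodesically convex and lies in a normal neighborhood of each of its points, $x\mapsto d^2(X,x)$ is smooth on $\U$. Its Riemannian gradient is $-2\log_x X$.

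\textbf{Step 1: estimating equations.} Set $\varphi=\log_\mu$ and $\hat v=\log_\mu(\hat\mu)$. Define
\[
\Psi_n(v)=\frac1n\sum_{i=1}^n \nabla_v\, d^2\bigl(X_i,\exp_\mu v\bigr),
\]
the gradient of the empirical Fréchet function pulled back to $T_\mu\mcM$. By Lemma~\ref{lem:mean_consistency}, $\hat\mu\in\U$ eventually, so $\hat\mu$ is an interior minimizer and $\Psi_n(\hat v)=0$. At the population level, differentiating under the integral sign (justified by (A2) and smoothness on $\U$) and using that $\mu$ minimizes the Fréchet function gives
\[
\nabla F(0)=-2\,\mbE[\log_\mu X]=0 .
\]

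\textbf{Step 2: Taylor expansion.} Apply the mean value theorem componentwise:
\[
0=\Psi_n(\hat v)=\Psi_n(0)+D\Psi_n(\tilde v)\,\hat v,
\]
with $\tilde v$ on the segment $[0,\hat v]$. Here $\Psi_n(0)=-\frac{2}{n}\sum_i\log_\mu X_i$. Put $\xi_i\coloneq 2\log_\mu X_i$ (up to the paper's normalization of $H_X$). These are i.i.d., have mean zero by Step 1, and have finite second moment by (A2). Their covariance $\Omega_X=4\,\mbE[\log_\mu X\,(\log_\mu X)^T]$ is finite, so the multivariate CLT gives $\frac1{\sqrt n}\sum_i\xi_i\overset{d}{\to}\Normal(0,\Omega_X)$.

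\textbf{Step 3: Hessian convergence (main obstacle).} The key claim is $D\Psi_n(\tilde v)\to H_X$ in probability. I would prove it in two parts:
\begin{enumerate}
\item A uniform law of large numbers for $v\mapsto \nabla^2_v d^2(X,\exp_\mu v)$ on a compact neighborhood $K$ of $0$ with $\exp_\mu(K)\subset\U$. The entries are continuous in $v$. They are dominated by a function of $\|\log_\mu X\|$, using the bounded curvature and the injectivity radius bounded below in (A1) through standard Jacobi-field comparison bounds on the Hessian of $d^2$. The moment condition (A5), or already (A2), makes this envelope integrable.
\item Continuity of the population Hessian (A4), which combines with $\tilde v\to0$ a.s.\ (Lemma~\ref{lem:mean_consistency}) to give $D\Psi_n(\tilde v)\to D\Psi(0)=H_X$.
\end{enumerate}
Verifying the uniform dominating bound is where the geometric hypotheses really enter, and it is the step I expect to take the most care. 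In particular, the truncation to $\U$ in (A3) is needed to stay away from the cut locus.

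\textbf{Step 4: conclusion.} Since $H_X$ is positive definite by (A4), $D\Psi_n(\tilde v)$ is invertible with probability tending to one, and its inverse converges in probability to $H_X^{-1}$. Therefore
\[
\sqrt n\,\hat v=D\Psi_n(\tilde v)^{-1}\,\frac{1}{\sqrt n}\sum_i\xi_i=H_X^{-1}\frac{1}{\sqrt n}\sum_i\xi_i+o_p(1).
\]
The last equality uses Slutsky's lemma and the tightness of $\frac1{\sqrt n}\sum_i\xi_i$ from Step 2. This is \eqref{eq:mu_clt}, and \eqref{eq:nu_clt} follows in the same way.
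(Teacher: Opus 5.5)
Your proposal is correct. It is essentially the argument behind the result that the paper cites without giving its own proof (\citet{bhattacharya2005large}, Thm.~2.1): pull the Fr\'echet function back to $T_\mu\mcM$ via $\exp_\mu$, Taylor-expand its gradient at $0$, show the Hessian at intermediate points converges to $H_X$, and invert by Slutsky. Also, with the paper's $H_X$ (the Hessian of $\mbE[d^2(X,\cdot)]$, with no factor $\tfrac12$), your choice $\xi_i=2\log_\mu X_i$ and $\Omega_X=4\,\mbE[\log_\mu X(\log_\mu X)^\top]$ is exactly the right normalization, so the hedge ``up to the paper's normalization'' can be dropped.
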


We then prove the strong consistency of the plug-in covariance estimator $\widehat{\Sigma}_{X,Y}(p)$ for a fixed footpoint $p\in\U$ and, hence, of its scalar trace $\widehat{\Cov}_{X,Y}(p)$. This almost-sure convergence justifies treating $\widehat{\Sigma}_{X,Y}(p)$ as a consistent plug-in quantity in the subsequent $\sqrt{n}$-asymptotic expansion.

\begin{theorem}[Strong consistency]\label{thm:Sigma_consistency}
Under (A1)--(A4) and (A6), for any fixed $p\in\U$,
\[
\widehat{\Sigma}_{X,Y}(p) \xrightarrow{a.s.} \Sigma_{X,Y}(p).
\]
Consequently, $\widehat\Cov_{X,Y}(p)\to \Cov_{X,Y}(p)$ almost surely.
\end{theorem}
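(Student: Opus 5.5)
We split the plug-in estimator into an oracle term, in which the population means $\mu,\nu$ are used, and a perturbation term caused by estimating the means:
\[
\widehat{\Sigma}_{X,Y}(p)=\frac1n\sum_{i=1}^n \phi(X_i)\psi(Y_i)^\top+\frac1n\sum_{i=1}^n\Big[\hat U_i\hat V_i^\top-\phi(X_i)\psi(Y_i)^\top\Big]=:A_n+B_n .
\]
All vectors live in the fixed space $T_p\mcM$, so after fixing an orthonormal basis of $T_p\mcM$ every quantity is an ordinary $d\times d$ matrix.

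\textbf{Oracle term.} The pairs $(X_i,Y_i)$ are i.i.d., so the summands $\phi(X_i)\psi(Y_i)^\top$ are i.i.d. matrices. Parallel transport is an isometry, which gives
\[
\|\phi(X)\psi(Y)^\top\|\le \|\log_\mu X\|_\mu\,\|\log_\nu Y\|_\nu .
\]
By Cauchy--Schwarz and (A2) the right-hand side has finite expectation. The strong law of large numbers therefore gives $A_n\to \mbE[\phi(X)\psi(Y)^\top]=\Sigma_{X,Y}(p)$ almost surely. Here the transport is taken along $\alpha$, consistent with Definition~\ref{def:OurCov}, and $p$ lies on the geodesic.

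\textbf{Perturbation term.} This is the main obstacle. We write
\[
\hat U_i\hat V_i^\top-\phi(X_i)\psi(Y_i)^\top=(\hat U_i-\phi(X_i))\hat V_i^\top+\phi(X_i)(\hat V_i-\psi(Y_i))^\top .
\]
By (A6), the map $m\mapsto\Gamma_m^p\log_m(z)$ has first derivatives bounded uniformly by some $L$ on $\U\times\U$. Since $\U$ is geodesically convex, the mean value inequality along the geodesic from $\mu$ to $\hat\mu$ gives
\[
\|\hat U_i-\phi(X_i)\|\le L\,d(\hat\mu,\mu)
\]
whenever $X_i\in\U$ and $\hat\mu\in\U$. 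By Lemma~\ref{lem:mean_consistency}, $\hat\mu\to\mu$ almost surely, so $\hat\mu\in\U$ eventually. Using $\|\hat V_i\|=d(\hat\nu,Y_i)\le d(\nu,Y_i)+d(\hat\nu,\nu)$, we obtain
\[
\|B_n\|\le L\,d(\hat\mu,\mu)\Big(\tfrac1n\textstyle\sum_i d(\nu,Y_i)+d(\hat\nu,\nu)\Big)+L\,d(\hat\nu,\nu)\,\tfrac1n\textstyle\sum_i d(\mu,X_i).
\]
The empirical averages converge almost surely to finite limits by the strong law and (A2). Both $d(\hat\mu,\mu)$ and $d(\hat\nu,\nu)$ tend to zero almost surely, so $B_n\to0$ almost surely.

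\textbf{Observations outside $\U$.} The delicate point is the event where some $X_i$ or $Y_i$ leaves $\U$; this is where (A3) enters. I would first take $\delta=0$, i.e.\ the data lie in $\U$ almost surely, which is consistent with Assumption~\ref{A1}, and get the bound above directly. For $\delta>0$, I would follow the truncation the assumption alludes to. Split each summand by the indicator of $\{X_i,Y_i\in\U\}$. On the complement, use the crude bound
\[
\|\hat U_i\hat V_i^\top\|\le d(\hat\mu,X_i)\,d(\hat\nu,Y_i).
\]
Compactness of $\mcM$ bounds this by $\mathrm{diam}(\mcM)^2$. The contribution of these terms to the average converges almost surely to the expectation of the truncated part, and the population quantity has the same truncated part. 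I expect this accounting to be the fiddliest step.

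\textbf{Conclusion.} Combining the two terms gives $\widehat{\Sigma}_{X,Y}(p)\to\Sigma_{X,Y}(p)$ almost surely. Since the trace is a continuous linear map on $T_p\mcM\otimes T_p\mcM$, the continuous mapping theorem yields $\widehat\Cov_{X,Y}(p)\to\Cov_{X,Y}(p)$ almost surely.
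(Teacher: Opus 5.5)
Your argument matches the paper's proof. It uses an SLLN for the oracle average $\frac1n\sum_i\phi(X_i)\psi(Y_i)^\top$, integrable by the isometry property, Cauchy--Schwarz and (A2), and the same product split of the plug-in error, controlled by the uniform footpoint-Lipschitz bound in (A6) together with Lemma~\ref{lem:mean_consistency}. Your bound $\|\hat V_i\|= d(\hat\nu,Y_i)\le d(\nu,Y_i)+d(\hat\nu,\nu)$ is a cleaner substitute for the paper's local-domination step.

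The only soft spot is your $\delta>0$ truncation paragraph. There the $\mathrm{diam}(\mcM)^2$ bound controls the out-of-$\U$ terms only up to an $O(\delta)$ error. Genuine convergence would require continuity of $m\mapsto\Gamma_m^p\log_m(z)$ at $m=\mu$ for $z\notin\U$, which fails on the cut locus of $\mu$. The paper's own proof, however, also covers only the case where the data lie in $\U$.
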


\begin{lemma}[Asymptotic linear representation of $\widehat\Sigma_{X,Y}(p)$]\label{lem:Sigma_if}
Assume (A1)--(A6) and fix $p\in\U$.
Let $\mathrm{vec}(\cdot)$ stack columns of a $d\times d$ matrix into a $d^2$-vector and define
\[
\Delta_n\coloneq \mathrm{vec}(\widehat\Sigma_{X,Y}(p))-\mathrm{vec}(\Sigma_{X,Y}(p)).
\]
Then
\begin{equation}\label{eq:final_if}
\sqrt{n}\,\Delta_n
=
\frac1{\sqrt{n}}\sum_{i=1}^n \IF_i + o_p(1),
\end{equation}
where
\begin{equation}\label{eq:IF}
\IF_i
\coloneq 
\Big(\mathrm{vec}(Z_i)-\mbE\mathrm{vec}(Z_i)\Big)
+
A_\mu H_X^{-1}\xi_i
+
A_\nu H_Y^{-1}\zeta_i.
\end{equation}
Here, $Z_i\coloneq U_iV_i^\top$ with $U_i=\Gamma_\mu^p\log_\mu(X_i)$ and $V_i=\Gamma_\nu^p\log_\nu(Y_i)$,
and $A_\mu,A_\nu$ and $\xi_i,\zeta_i$ are as defined in \eqref{eq:defA_mu}--\eqref{eq:defA_nu} and Lemma~\ref{lem:mean_clt}.
\end{lemma}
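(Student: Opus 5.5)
The plan is to split $\Delta_n$ into a ``known-means'' term and a ``plug-in'' term, then handle the plug-in term with a first-order Taylor expansion in the footpoint variables combined with Lemma~\ref{lem:mean_clt}. Write $Z_i(m,m')\coloneq \bigl(\Gamma_m^p\log_m X_i\bigr)\bigl(\Gamma_{m'}^p\log_{m'}Y_i\bigr)^\top$, so that $\widehat\Sigma_{X,Y}(p)=\frac1n\sum_i Z_i(\hat\mu,\hat\nu)$ and $Z_i=Z_i(\mu,\nu)$. Then
\[
\sqrt n\,\Delta_n=\frac1{\sqrt n}\sum_{i=1}^n\bigl(\mathrm{vec}(Z_i)-\mbE\,\mathrm{vec}(Z_i)\bigr)+\frac1{\sqrt n}\sum_{i=1}^n\bigl(\mathrm{vec}\,Z_i(\hat\mu,\hat\nu)-\mathrm{vec}\,Z_i(\mu,\nu)\bigr).
\]
The first sum already has the required form. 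It has finite variance because (A5) and Cauchy--Schwarz give $\mbE\|U_iV_i^\top\|^2\le \mbE\|U_i\|^2\|V_i\|^2\le (\mbE\|U_i\|^4\,\mbE\|V_i\|^4)^{1/2}<\infty$, using that parallel transport is an isometry. All the work is in the second sum.

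For the second sum, parametrize the footpoints in normal coordinates: $m=\exp_\mu(u)$ with $u\in T_\mu\mcM$ and $m'=\exp_\nu(w)$ with $w\in T_\nu\mcM$. By (A6), the map $(u,w)\mapsto \mathrm{vec}\,Z_i(\exp_\mu u,\exp_\nu w)$ is $C^2$ on $\U$ (after truncation via (A3)). Set $\hat u=\log_\mu\hat\mu$ and $\hat w=\log_\nu\hat\nu$, and expand by the mean-value theorem:
\[
\mathrm{vec}\,Z_i(\hat\mu,\hat\nu)-\mathrm{vec}\,Z_i=D_uZ_i\,\hat u+D_wZ_i\,\hat w+R_i .
\]
By the product rule, $D_uZ_i$ is (derivative of $\phi$ in the footpoint)$\,\otimes V_i$, and symmetrically for $D_w$. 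Uniformly bounded first derivatives (A6) together with (A5) then give $\mbE\|D_uZ_i\|<\infty$. I would then define
\[
A_\mu\coloneq \mbE[D_uZ_i],\qquad A_\nu\coloneq \mbE[D_wZ_i],
\]
which is presumably the content of \eqref{eq:defA_mu}--\eqref{eq:defA_nu}. With this definition, $\frac1{\sqrt n}\sum_i D_uZ_i\,\hat u=\bigl(\frac1n\sum_i D_uZ_i\bigr)\sqrt n\,\hat u$.

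Three facts finish the argument. By the SLLN, $\frac1n\sum_i D_uZ_i\to A_\mu$ a.s. By Lemma~\ref{lem:mean_clt}, $\sqrt n\,\hat u=H_X^{-1}n^{-1/2}\sum_i\xi_i+o_p(1)=O_p(1)$. Slutsky's lemma then gives
\[
\frac1{\sqrt n}\sum_i D_uZ_i\,\hat u=A_\mu H_X^{-1}\frac1{\sqrt n}\sum_i\xi_i+o_p(1),
\]
and likewise $A_\nu H_Y^{-1}n^{-1/2}\sum_i\zeta_i+o_p(1)$ for the $\nu$ term. Adding these two terms to the first sum yields \eqref{eq:final_if}.

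The main obstacle is the remainder $\frac1{\sqrt n}\sum_i R_i$. I would bound it in one of two ways. The first route is to write $R_i=\bigl(D Z_i(\tilde u_i,\tilde w_i)-DZ_i(0,0)\bigr)(\hat u,\hat w)$ at intermediate points and bound it by $\omega_i(\|\hat u\|+\|\hat w\|)\,(\|\hat u\|+\|\hat w\|)$, where $\omega_i$ is a modulus of continuity of the derivative. Then $\frac1{\sqrt n}\sum_i\|R_i\|\le \bigl(\frac1n\sum_i\sup_{\|(u,w)\|\le\epsilon}\|DZ_i(u,w)-DZ_i(0,0)\|\bigr)\,O_p(1)$ on the event $\|(\hat u,\hat w)\|\le\epsilon$. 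That event has probability tending to one by Lemma~\ref{lem:mean_consistency}. The averaged supremum tends to $\mbE\sup(\cdot)$, which vanishes as $\epsilon\to0$ by dominated convergence. The domination comes from the product structure: derivatives of $\phi$ and $\psi$ are bounded by (A6), and they are multiplied by $\|U_i\|$ or $\|V_i\|$, whose second moments are finite. The alternative route is to use the $C^2$ assumption to get $\|R_i\|\le C(1+\|U_i\|+\|V_i\|)(\|\hat u\|^2+\|\hat w\|^2)$, which makes the remainder sum $O_p(n^{-1/2})$. In this route, the second derivatives in the footpoint must be uniformly bounded on $\U\times\U$, which follows from compactness of the closure of the truncated region. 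Either route gives $o_p(1)$ for the remainder, completing the proof.
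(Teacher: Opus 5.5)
Your proposal is correct and follows essentially the same route as the paper: split $\sqrt n\,\Delta_n$ into the known-means empirical term and the plug-in term, linearize the plug-in term in $\log_\mu\hat\mu$ and $\log_\nu\hat\nu$, replace the averaged footpoint Jacobians by their expectations $A_\mu,A_\nu$ via the law of large numbers, and substitute the expansion of Lemma~\ref{lem:mean_clt}. The differences are cosmetic. You expand the product map $Z_i(m,m')$ jointly, whereas the paper expands each factor and then multiplies; your second remainder route is the paper's uniform $O_p(\|\delta\|^2)$ bound; and your first route, via a modulus of continuity, is a slightly more careful variant that needs only continuous first derivatives plus an integrable envelope.
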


\begin{theorem}[$\sqrt{n}$-asymptotic normality]\label{thm:Sigma_clt}
Under (A1)--(A6), for any fixed $p\in\U$,
\[
\sqrt{n}\Big(\mathrm{vec}(\widehat\Sigma_{X,Y}(p))-\mathrm{vec}(\Sigma_{X,Y}(p))\Big)
\overset{d}{\to}
\Normal(0,\Xi),
\]
where
\begin{equation}\label{eq:Xi}
\Xi
=
\Var(\IF_1)
=
\Var\Big(
\mathrm{vec}(Z_1)
+ A_\mu H_X^{-1}\xi_1
+ A_\nu H_Y^{-1}\zeta_1
\Big).
\end{equation}
\end{theorem}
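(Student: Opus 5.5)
The proof is a direct consequence of Lemma~\ref{lem:Sigma_if} combined with the multivariate central limit theorem and Slutsky's lemma. By Lemma~\ref{lem:Sigma_if}, under (A1)--(A6) we already have
\[
\sqrt{n}\,\Delta_n=\frac1{\sqrt n}\sum_{i=1}^n \IF_i+o_p(1),
\]
so it suffices to show that $n^{-1/2}\sum_i \IF_i\overset{d}{\to}\Normal(0,\Var(\IF_1))$. The $o_p(1)$ remainder then does not affect the limit, by Slutsky.

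The plan is to verify that the $\IF_i$ are i.i.d., mean zero, and have finite second moments, in three steps.

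\emph{Independence.} Each $\IF_i$ is a fixed measurable function of $(X_i,Y_i)$. The quantities $\mu,\nu,p,A_\mu,A_\nu,H_X,H_Y$ are deterministic population objects. The scores $\xi_i,\zeta_i$ from Lemma~\ref{lem:mean_clt} are functions of $X_i$ and $Y_i$ respectively (in Bhattacharya--Patrangenaru, $\xi_i$ is proportional to the gradient $-2\log_\mu X_i$ of $d^2(X_i,\cdot)$ at $\mu$). Hence $\{\IF_i\}$ is an i.i.d. sequence in $\mbR^{d^2}$, where the transported vectors are identified with $\mbR^d$ through a fixed orthonormal basis of $T_p\mcM$.

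\emph{Mean zero.} The first term $\mathrm{vec}(Z_i)-\mbE\,\mathrm{vec}(Z_i)$ is centered by construction. The terms $\xi_i$ and $\zeta_i$ are mean zero by Lemma~\ref{lem:mean_clt}, and $A_\mu H_X^{-1}$, $A_\nu H_Y^{-1}$ are fixed linear maps, so those terms are centered as well.

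\emph{Finite second moments.} This is the only step needing care. Because parallel transport is an isometry, $\|Z_i\|_F=\|U_i\|_p\|V_i\|_p=\|\log_\mu X_i\|_\mu\,\|\log_\nu Y_i\|_\nu$. Cauchy--Schwarz then gives
\[
\mbE\|Z_i\|_F^2\le \bigl(\mbE\|\log_\mu X\|^4\bigr)^{1/2}\bigl(\mbE\|\log_\nu Y\|^4\bigr)^{1/2}<\infty
\]
by (A5). The terms $\xi_i,\zeta_i$ have finite second moments by Lemma~\ref{lem:mean_clt}. $H_X,H_Y$ are invertible by (A4), and $A_\mu,A_\nu$ are finite because (A6) gives uniformly bounded first derivatives. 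So $\mbE\|\IF_1\|^2<\infty$ by the triangle inequality.

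With these three facts, the classical multivariate Lindeberg--L\'evy CLT gives $n^{-1/2}\sum_i\IF_i\overset{d}{\to}\Normal(0,\Xi)$ with $\Xi=\Var(\IF_1)$, which is \eqref{eq:Xi}. Slutsky's lemma then transfers this limit to $\sqrt n\,\Delta_n$.

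I expect no real obstacle here, since the analytic work (the Taylor expansion in $\hat\mu,\hat\nu$ and control of the remainder) is contained in Lemma~\ref{lem:Sigma_if}. The one point to state explicitly is the truncation in (A3) when $\delta>0$: the expansion and the moment bounds hold on the event $\{X,Y\in\U\}$, and the CLT is applied to the truncated variables.
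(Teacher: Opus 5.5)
Your proposal is correct and follows the paper's proof: invoke Lemma~\ref{lem:Sigma_if}, apply the multivariate CLT to the i.i.d., mean-zero, square-integrable $\IF_i$, and conclude with Slutsky's lemma. Your explicit second-moment check (isometry of parallel transport, Cauchy--Schwarz with (A5), and finiteness of $A_\mu, A_\nu$ from (A6)) spells out what the paper asserts in one line.
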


Since $\widehat\Cov_{X,Y}(p)=\tr(\widehat\Sigma_{X,Y}(p))$ is a linear functional of $\widehat\Sigma_{X,Y}(p)$, the delta method is immediate. We hence have the corollary below stating the CLT for the scalar covariance.

\begin{corollary}[CLT for scalar covariance]\label{cor:trace_clt}
Under the conditions of Theorem~\ref{thm:Sigma_clt},
\[
\sqrt{n}\Big(\widehat\Cov_{X,Y}(p)-\Cov_{X,Y}(p)\Big)
\overset{d}{\to}
\Normal\big(0,\ \sigma^2_{\tr}\big),
\]
where $\sigma^2_{\tr}=\Var\big(\langle \mathrm{vec}(I_d), \IF_1\rangle\big)$ and $I_d$ is the $d\times d$ identity matrix.
\end{corollary}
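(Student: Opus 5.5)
The plan is to treat the trace as a fixed linear functional on the $d^2$-dimensional space of matrices and push Theorem~\ref{thm:Sigma_clt} through it with the continuous mapping theorem. This is the delta method in its simplest, exactly linear form.

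First, we fix an orthonormal basis of $T_p\mcM$, which is the basis implicitly used in Definition~\ref{def:OurCov} and in the estimator. Both $\widehat\Sigma_{X,Y}(p)$ and $\Sigma_{X,Y}(p)$ are then $d\times d$ matrices. The trace is basis-independent, and the identity $\tr(M)=\langle \mathrm{vec}(I_d),\mathrm{vec}(M)\rangle$ holds for every $M$. Writing $\ell(v)=\langle \mathrm{vec}(I_d),v\rangle$, linearity gives
\[
\sqrt{n}\big(\widehat\Cov_{X,Y}(p)-\Cov_{X,Y}(p)\big)=\ell\big(\sqrt{n}\,\Delta_n\big),
\]
with $\Delta_n$ as in Lemma~\ref{lem:Sigma_if}. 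This holds exactly, with no remainder term.

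Second, $\ell$ is a continuous linear map $\mbR^{d^2}\to\mbR$. Theorem~\ref{thm:Sigma_clt} gives $\sqrt{n}\Delta_n\overset{d}{\to}\Normal(0,\Xi)$. By the continuous mapping theorem, $\ell(\sqrt{n}\Delta_n)\overset{d}{\to}\ell(W)$ with $W\sim\Normal(0,\Xi)$. A linear image of a Gaussian is Gaussian, so $\ell(W)\sim\Normal\big(0,\mathrm{vec}(I_d)^\top\Xi\,\mathrm{vec}(I_d)\big)$.

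Third, we identify the variance with the stated form. Equation \eqref{eq:Xi} gives $\Xi=\Var(\IF_1)$, and hence
\[
\mathrm{vec}(I_d)^\top\Xi\,\mathrm{vec}(I_d)=\Var\big(\langle\mathrm{vec}(I_d),\IF_1\rangle\big)=\sigma^2_{\tr}.
\]
As an equivalent check, one can apply $\ell$ directly to the asymptotic linear representation \eqref{eq:final_if}. This yields
\[
\sqrt{n}\big(\widehat\Cov_{X,Y}(p)-\Cov_{X,Y}(p)\big)=n^{-1/2}\sum_{i=1}^n\ell(\IF_i)+o_p(1).
\]
The classical CLT applies to the i.i.d. scalars $\ell(\IF_i)$: they have mean zero because $\xi_i,\zeta_i$ and the centered $\mathrm{vec}(Z_i)$ are mean zero, and they have finite variance by (A5) together with Cauchy--Schwarz. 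Slutsky's lemma then gives the same limit.

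There is no real obstacle here. The only point needing care is that the trace is taken in an orthonormal frame of $(T_p\mcM,g_p)$. Only in such a frame does $\tr$ coincide with the Euclidean pairing against $\mathrm{vec}(I_d)$ that is used in $\sigma^2_{\tr}$. This is consistent with parallel transport being an isometry. Finally, if $\sigma^2_{\tr}=0$, the limit is interpreted as the point mass at $0$.
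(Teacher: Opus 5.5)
Your proposal is correct and is essentially the paper's argument: the paper only remarks that $\tr$ is a linear functional of $\widehat\Sigma_{X,Y}(p)$, so the delta method, which here is exact because the map is linear, applied to Theorem~\ref{thm:Sigma_clt} gives the limit with variance $\mathrm{vec}(I_d)^\top\Xi\,\mathrm{vec}(I_d)=\Var(\langle\mathrm{vec}(I_d),\IF_1\rangle)$. Your two additional points, that the trace must be taken in an orthonormal frame of $T_p\mcM$ and that $\sigma^2_{\tr}=0$ gives a point mass at zero, are sound clarifications that the paper leaves implicit.
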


\begin{remark}[Plug-in variance estimation and Wald inference]
One can estimate $\Xi$ by the sample covariance of estimated influence terms:
\[
\widehat\Xi
=
\frac1n\sum_{i=1}^n \hat\IF_i\hat\IF_i^\top,
\]
where $\hat\IF_i$ replaces $(\mu,\nu,H_X,H_Y,A_\mu,A_\nu,\xi_i,\zeta_i)$ by consistent estimators
(e.g.\ empirical Hessians and empirical gradients for the Fr\'echet loss, and empirical averages for $A_\mu,A_\nu$).
Then asymptotic Wald-type confidence regions follow:
\[
\mathrm{vec}(\widehat\Sigma_{X,Y}(p)) \pm z_{\alpha/2}\, \frac{1}{\sqrt{n}}\sqrt{\mathrm{diag}(\widehat\Xi)}.
\]
For the trace, use $\hat\sigma^2_{\tr}=\mathrm{vec}(I_d)^\top \widehat\Xi\,\mathrm{vec}(I_d)$.
\end{remark}

\begin{remark}[Correlation]
    The consistency and asymptotic normality properties of Riemannian cross-correlation $\hat\varrho_{X,Y}(p)$ and Riemannian correlation coefficient $\hat\mcR_{X,Y}(p)$ follow directly. This is since they are simply the compositions of continuous functions of $\widehat\Sigma_{X,Y}(p)$ and $\widehat\Cov_{X,Y}(p)$.
\end{remark}
\section{Examples}\label{ss:examp}
For the following simulations, we utilize code from ManOpt \cite{JMLR:v17:16-177} and Geomstats \cite{miolane2020geomstats}.

\subsection{Sphere}
We first present some simulation studies on unit sphere $\mbS^2$ to illustrate the behavior of Riemannian covariance and correlation, see \ref{app:spheredetail} for more details. The first dataset is sampled from a truncated uniform distribution. A dependent dataset is then generated by rotating the first dataset, adding noise and transporting it to another point on the sphere, as detailed in \ref{sphere:simu}.

We compare our proposed Riemannian correlation with estimates from \citet{abuqrais2024riemannian} in different settings. Unless otherwise specified, we evaluate Riemannian correlation in the tangent space of the Fr\'echet mean of the dependent dataset and evaluate $A\&P$ correlation in the tangent space of the joint Fr\'echet mean, i.e. the midpoint of the geodesic connecting the two Fr\'echet means.

\begin{figure}[h]
    \centering
    {\includegraphics[height=3.2cm]{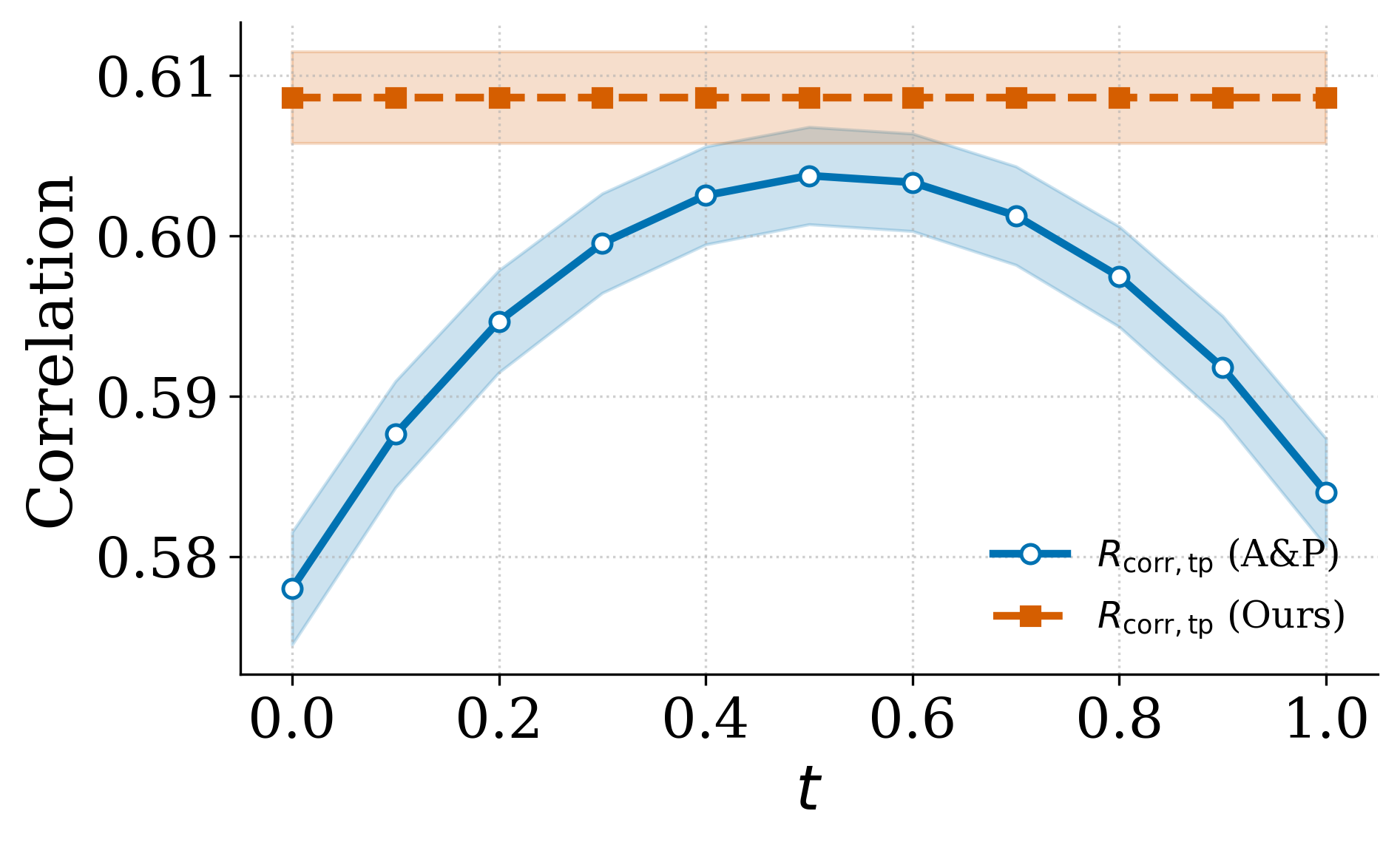}}
     \caption{Geodesic Evaluation on Sphere. The shaded area indicates 95\% confidence intervals (mean $\pm$ 2SE). The x-axis represents the location where the evaluation is taken along the geodesic connecting sample Fr\'echet means. Parameters: $n=200$, $\tau=\pi/6$, $\eta=\pi/4$, $q=(0,-1,0)$, $\sigma_\epsilon=0.15$, MC $= 200$.}
    \label{fig:eval_geo}
\end{figure}

For Figure \ref{fig:eval_geo}, we transport the dependent set to point $(0,-1,0)$ and evaluate the correlation estimate along the geodesic connecting Fr\'echet means of two datasets. The shaded area denotes the $95\%$ confidence interval based on two standard error. As shown in the plot, our estimate is numerically consistent along the geodesic, whereas the $A\&P$ correlation has an inverted U-shaped pattern with the largest estimate at the joint Fr\'echet mean. This demonstrates that the $A\&P$ correlation is subject to the choice of footpoint, as discussed in Section~\ref{RL_Cov}, while our method is footpoint-invariant.

\begin{figure}[h]
    \centering
    \begin{tabular}{@{}c@{} @{}c@{} @{}c@{}}
         \includegraphics[width=0.3\linewidth]{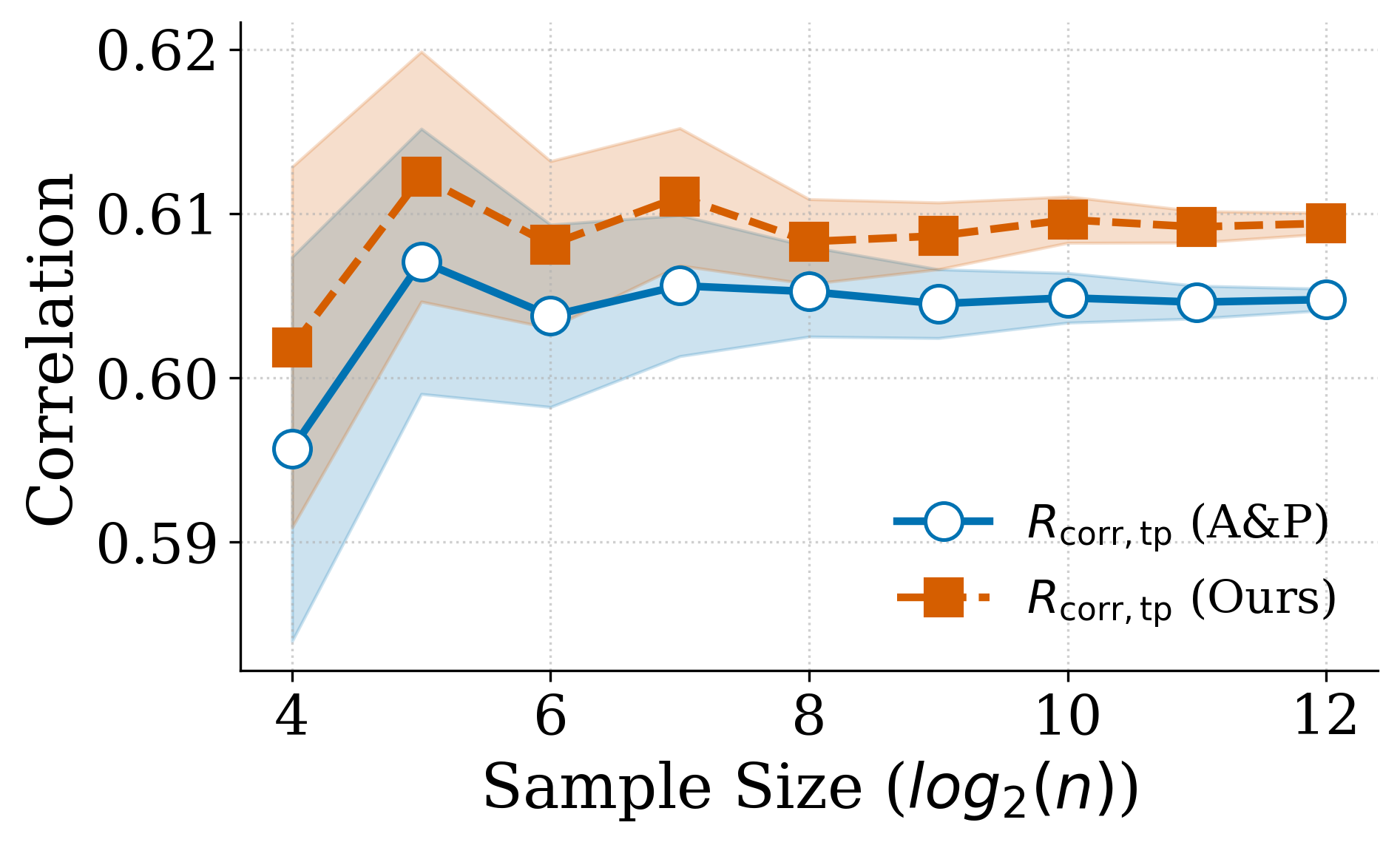}&
         \includegraphics[width=0.3\linewidth]{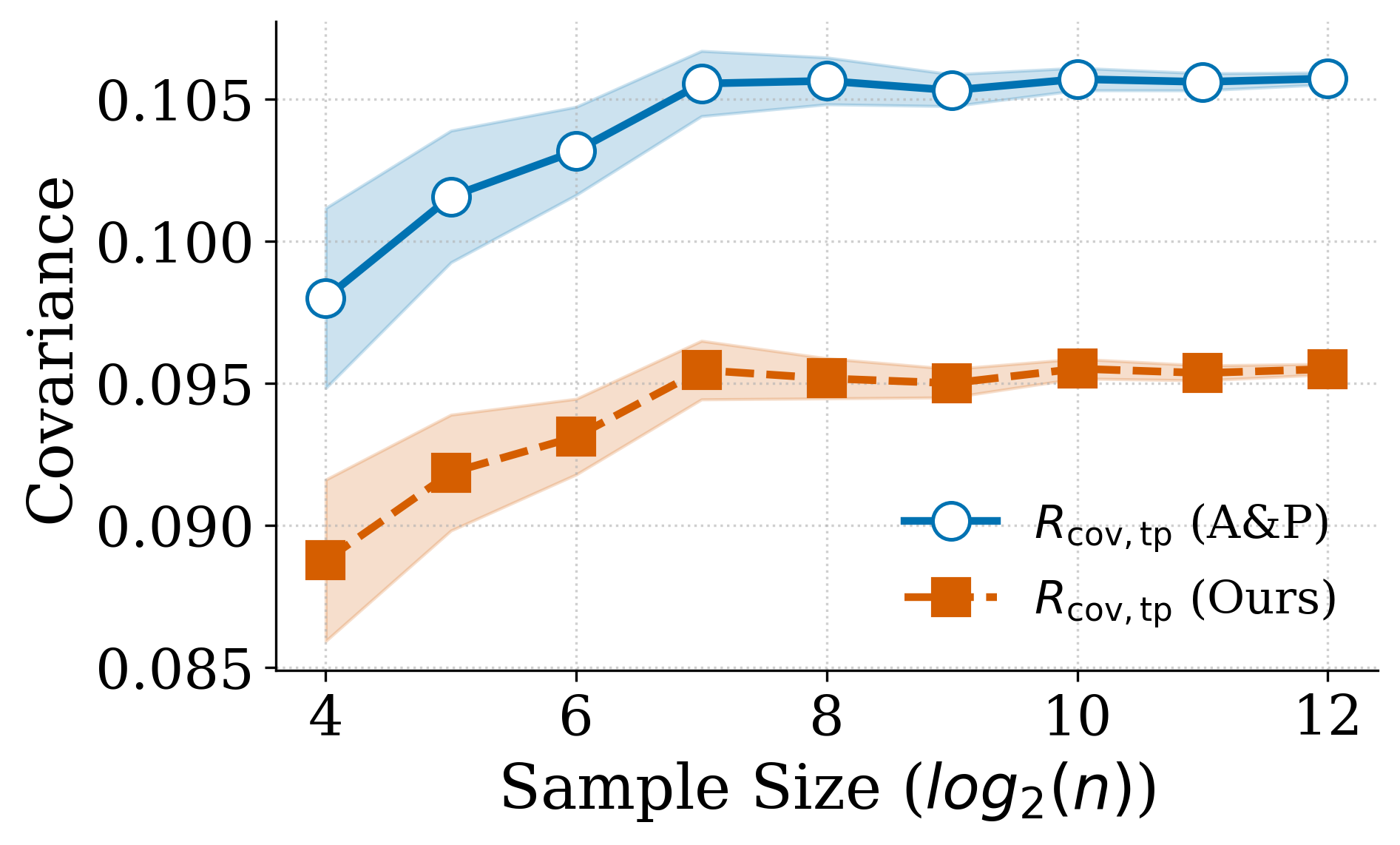}&
         \includegraphics[width=0.3\linewidth]{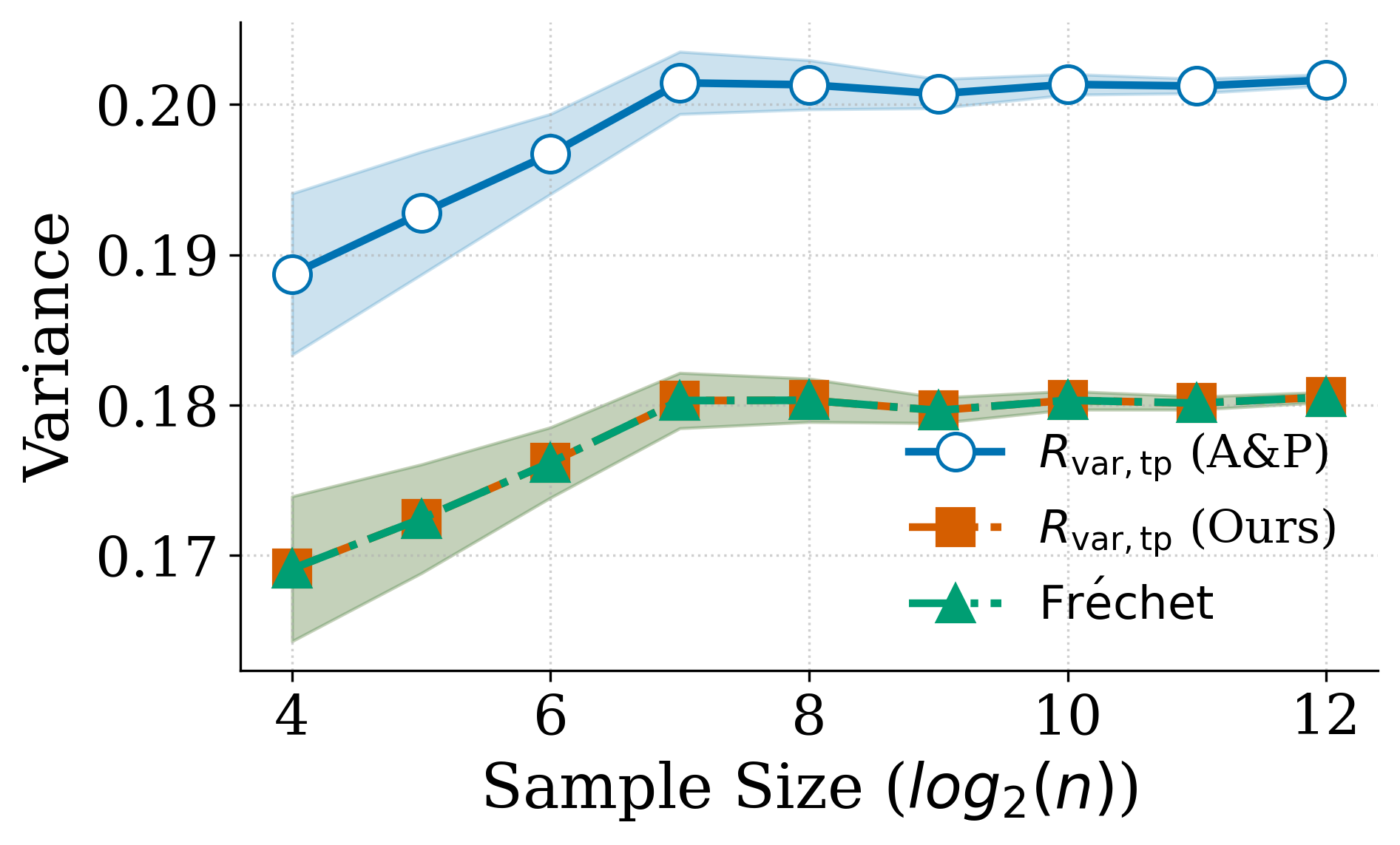}
    \end{tabular}
    \caption{Comprehensive Comparison on Sphere with parallel-transported dataset. Correlation, covariance, and variance estimates versus sample size. Shaded regions indicate 95\% confidence intervals. Parameters: $\tau=\pi/6$, $\eta=\pi/4$, $q=(0,-1,0)$, $\sigma_\epsilon=0.15$, MC $=200$.}
    \label{fig:comparison}
\end{figure}

Figure \ref{fig:comparison} compares the correlation and covariance estimates from our method and those from $A\&P$ correlation. We transport the dependent set to the point $(0,-1,0)$ and evaluate both methods at the joint Fr\'echet mean, with different sample sizes. As before, the shaded region represents the confidence interval corresponding to two standard errors. The left plot shows the estimates of correlation, where both methods exhibit convergence as the sample size increases. Compared to our approach, $A\&P$ correlation yields a slightly smaller result. The two panels in the middle and right are the corresponding covariance and variance estimates used in computing the correlation. For the variance, we also include the estimate of Fr\'echet variance, computed using Fr\'echet mean of transported set instead of the joint Fr\'echet mean. It can be seen that our method matches the Fr\'echet variance exactly, which is expected since parallel transport allows evaluation at the mean at all times. In contrast, the $A\&P$ method tends to produce a higher estimate, and could potentially lead to overestimation and bias.

\subsection{Symmetric Positive Definite Matrices}

We extend the simulation study to the manifold of Symmetric Positive Definite (SPD) matrices, $\mcP_d$, endowed with the Affine Invariant Riemannian Metric (AIRM), see Appendix~\ref{app:spdmdetail} for more details. This setting, characterized by non-positive (and non-constant) sectional curvature, provides a rigorous testbed for evaluating the proposed estimators.
The data generation process, detailed in Appendix \ref{app:spd_sim}, constructs correlated datasets by parallel transporting tangent vectors from a source distribution to a target location, thereby preserving the intrinsic geometric structure while varying the ambient configuration.

The fundamental property of footpoint invariance is first examined. Figure \ref{fig:spd_geodesic_eval} illustrates the trajectory of correlation estimates as the evaluation point $p$ traverses the geodesic connecting the sample means, for both the original dataset (non-transport) and the parallel-transported dataset (transport). We include this figure in Appendix~\ref{app:spd_sim} as the results are similar to the spherical case in Figure~\ref{fig:eval_geo}. The proposed Riemannian correlation yields estimates with low variation (standard error $SE \approx 0.008$) across all 21 evaluation points, which is consistent with footpoint invariance (Theorem \ref{thm:ftinvar}). In contrast, the A\&P estimator exhibits substantial variation ($SE \approx 0.15$), revealing a systematic bias that depends on the evaluation point. This dependence arises because the logarithmic map $\log_p(\cdot)$ at varying footpoints distorts the metric structure differently in the presence of negative curvature. Our transport-based formulation mitigates this flaw by aligning tangent spaces via parallel transport, which preserves inner products under isometry. The comparison between the two panels demonstrates that our method produces nearly identical estimates for both original and parallel-transported datasets, which is consistent with the estimator capturing the intrinsic geometry invariant under isometry.

\begin{figure}[h]
    \centering
    \begin{tabular}{@{}c@{} @{}c@{} @{}c@{}}
        \includegraphics[width=0.3\linewidth]{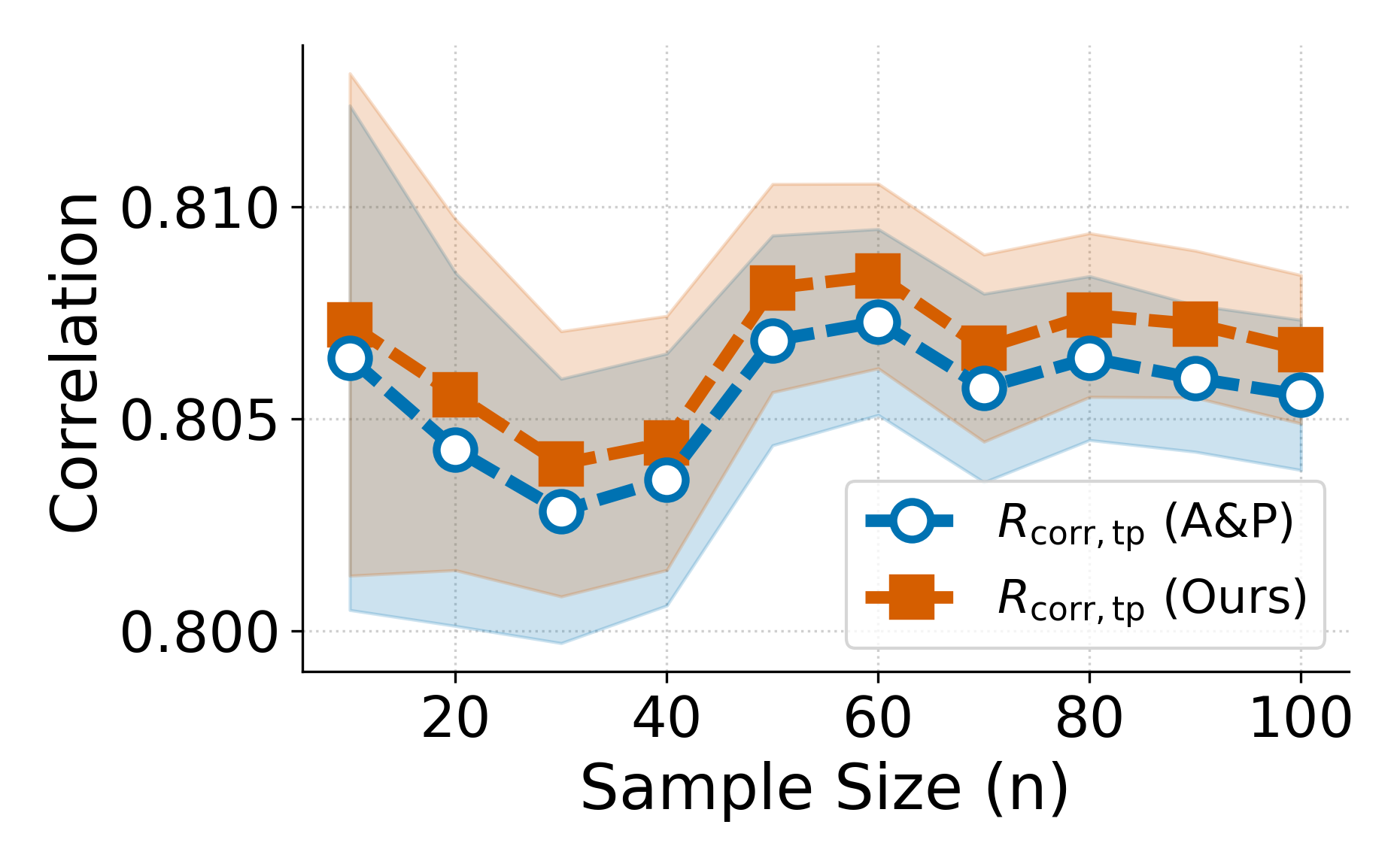}&
        \includegraphics[width=0.3\linewidth]{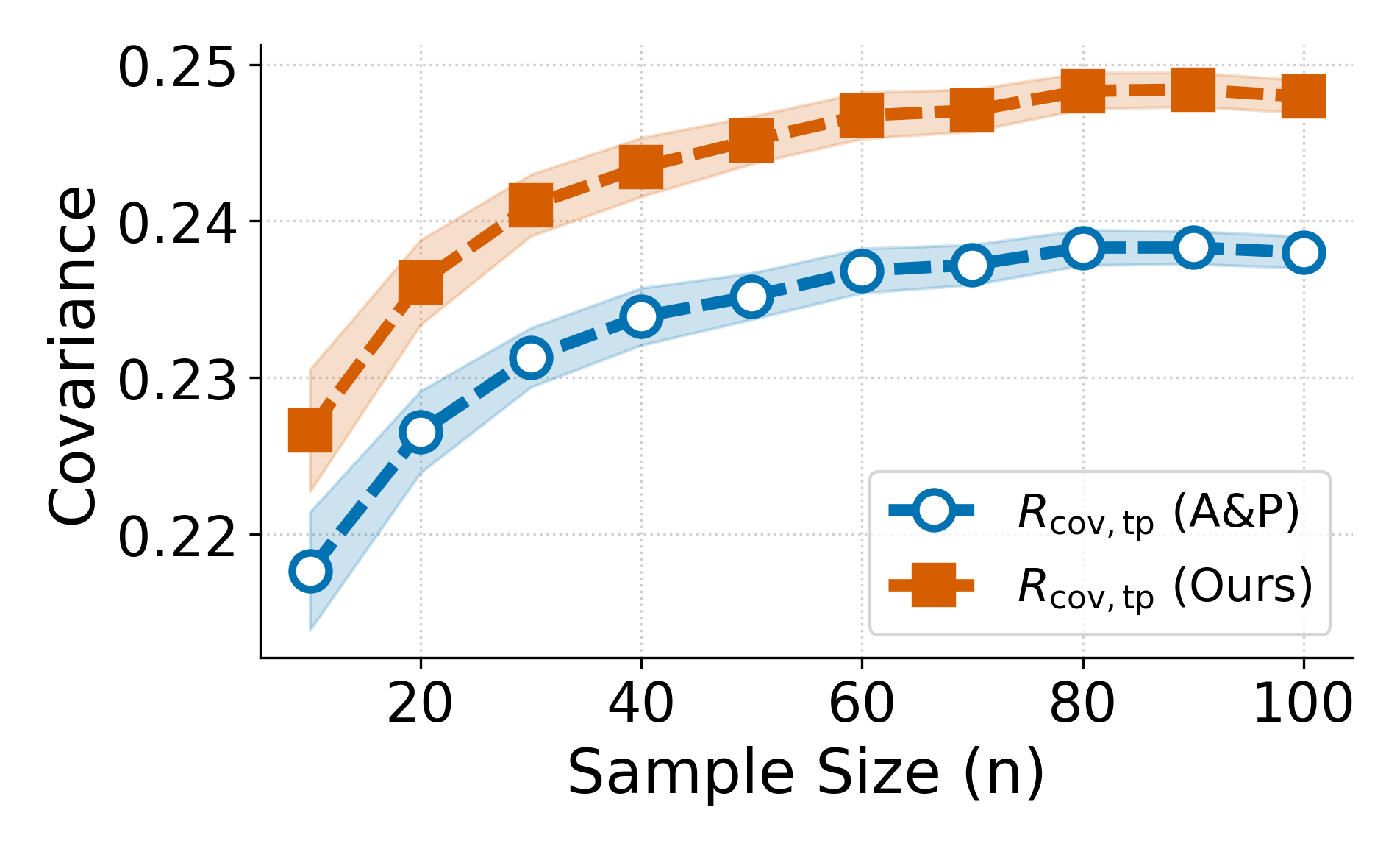}&
        \includegraphics[width=0.3\linewidth]{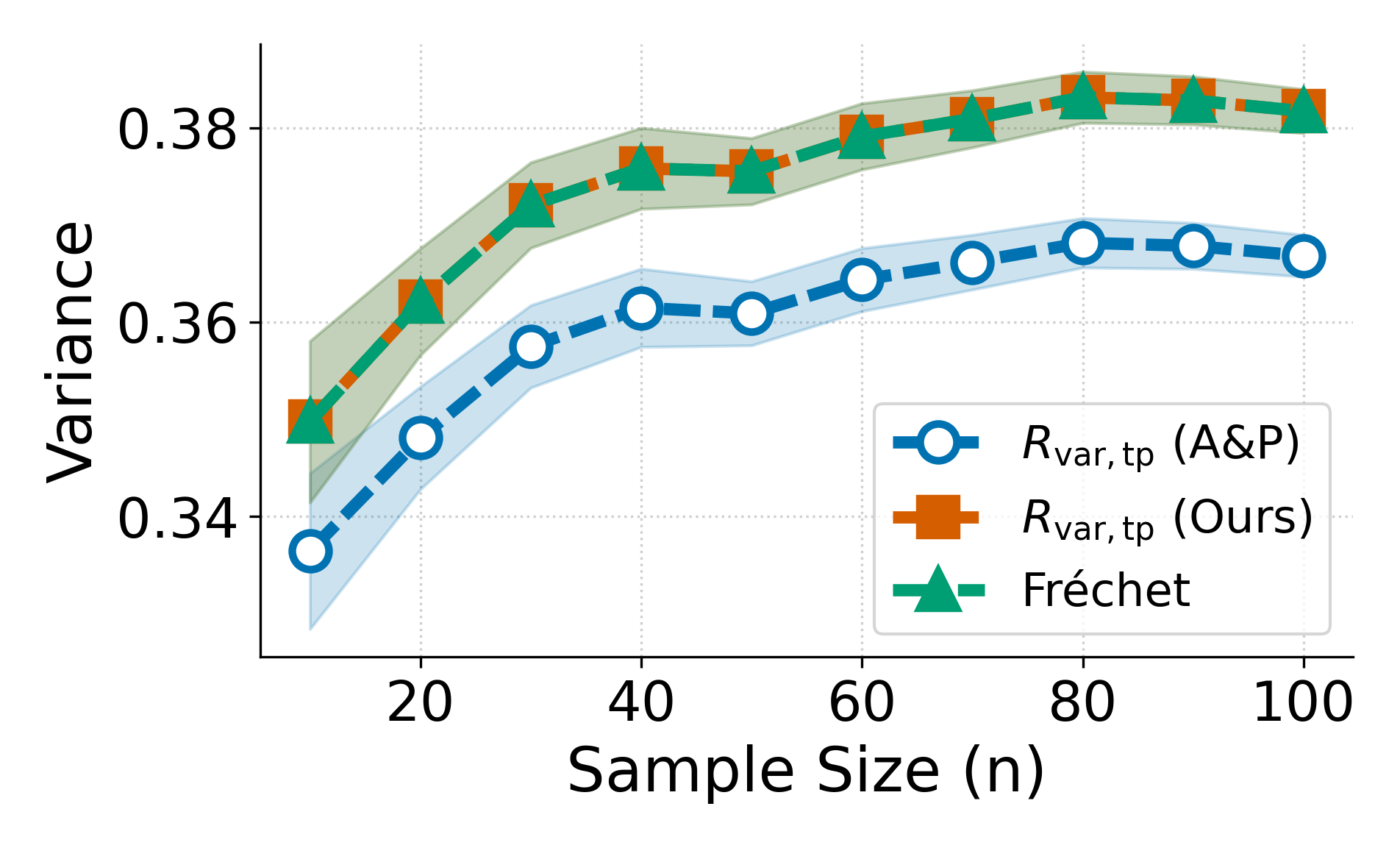}
    \end{tabular}
    \caption{Comprehensive Comparison on the SPD Manifold with parallel-transported dataset. Correlation, covariance, and variance estimates versus sample size. Shaded regions indicate 95\% confidence intervals. Parameters: $d=3, \tau=0.5, \sigma_\epsilon=0.15$, MC $=200$.}
    \label{fig:spd_comprehensive2}
\end{figure}

\subsection{Kendall's Shape Space}

Kendall's shape space \citep{kendall1984shape} is a geometric framework for shape analysis in a way that treat objects as having the same shape regardless of their location, scale, and orientation. For more details, we refer to \ref{app:kendalldetail}. Kendall's shape space is identified with the quotient space $$\mcL_{m,k}/\left(\mbR^m\rtimes(\mbR^+\times SO(m))\right),$$ with the translation group $\mbR^m$, scaling group $\mbR^+$, and rotation group $SO(m)$. Here $\rtimes$ is the semi-product of groups. In the special case of planar shapes $(m=2)$, the shape space is a complex projective space $\mbC P^{k-2}$, which has a well-defined Riemannian geometry structure. 

We evaluate our approach on the Sunnybrook Cardiac MRI dataset \cite{radau2009evaluation}, which consists of two-dimensional cine magnetic resonance images (MRI) acquired in the short-axis view. The dataset includes four clinically distinct groups: Healthy controls (Normal), Hypertrophic cardiomyopathy (Hypertrophy), Heart failure without infarction (HF-Infarct), and Heart failure with infarction (HF+Infarct). Landmarks are extracted from contour coordinate of each cardiac structure. We compute the correlation between the inner wall (endocardium) and outer wall (epicardium) of left ventricular (LV) anatomy within each group. Our goal is to investigate whether the cardiac condition could affect the LV structure and symmetry.

\begin{table}[h]
    \centering
    \caption{Correlation estimates for the four groups using our proposed method, the $A\&P$ correlation, and canonical correlation analysis (CCA).}
    \small
    \begin{tabular}{cccc}
    \toprule
        & $R_{corr}$(Ours) & $R_{corr}$($A\&P)$ & CCA \\
    \midrule
    Normal($n=9$)  &  0.587 & 0.637 & 1\\
    Hypertrophy($n=12$) & 0.352 & 0.394 & 1\\
    HF-infarct($n=12$) &  0.840 & 0.846 & 1\\
    HF+infarct($n=12$) & 0.628  & 0.659 & 1\\
    \bottomrule
    \end{tabular}
    
    \label{tab:shape_corr}
\end{table}

Table \ref{tab:shape_corr} compares the correlation estimates from our proposed approach, $A\&P$ correlation, and CCA. As in the previous analysis, the $A\&P$ correlation is computed at the joint Fr\'echet mean. The proposed estimates are close to those obtained using the $A\&P$ method, whereas CCA yields correlations equal to 1 across all four groups. This reflects an inherent limitation of CCA: since it directly optimizes correlation, it is prone to substantial overfitting in small-sample and high-dimensional settings.

Similar shapes in shape space are expected to exhibit a high degree of association. The estimated Riemannian correlation results suggest that the HF-infarct group has the highest correlation, while the hypertrophy group shows a relatively weak association. As illustrated in Figure \ref{fig:shape_visual}, the inner and outer walls of the LV in the HF-infarct group are expected to remain geometrically symmetric. In contrast, MRI images of patient with hypertrophy typically display imbalanced wall thickness, leading to a disruption of symmetry. These observations are consistent with existing finds in the literature. For instance, \citet{maron2014hypertrophic} reports that in hypertrophic cardiomyopathy, the inner wall becomes irregular and chaotic, whereas the outer wall remains relatively smooth, thereby breaking the geometric coupling structure. Moreover, \citet{eshaghian2007cardiac} states that amyloid infiltration of the heart can lead to a uniformly thickened wall with a firm, rubbery consistency, resulting in heart failure without infarction. Overall, our proposed estimator provides reasonable results and presents a quantitative measure of structural coupling in shape geometry.
 
\section{Discussion}
We have introduced two novel definitions, the intrinsic footpoint-invariant Riemannian cross-covariance and the subsequent Riemannian correlation. These tools are fundamental for understanding the second-order information present in manifold-valued random objects. Our estimators enjoy many desirable properties (e.g., Proposition~\ref{prop:properties}), which makes them a robust and natural choice for manifold statistical analysis. Lastly, their asymptotic properties are vital for quantifying their behavior.
The experimental results on the sphere, SPD matrices, and Kendall's shape space show the applicability of our method and validate its properties. Given its effectiveness, the method can further the area of geometric learning on Riemannian manifolds, including regression and principal component analysis.
We explored a few manifolds; however, many others can be considered, such as the Grassmannian, the Stiefel manifold, the spaces of rotation and reflection matrices, hyperbolic space, and product manifolds such as the torus, to name a few.
Further, our method can be extended to Lie groups, where the natural choice for a tangent space is the Lie algebra.

\newpage
\bibliography{ref.bib}

@article{petersen2019frechet,
  title={Fr{\'e}chet regression for random objects with Euclidean predictors},
  author={Petersen, Alexander and M{\"u}ller, Hans-Georg},
  journal={The Annals of Statistics},
  volume={47},
  number={2},
  pages={691--719},
  year={2019},
  publisher={JSTOR}
}

@article{pennec2006intrinsic,
  title={Intrinsic statistics on Riemannian manifolds: Basic tools for geometric measurements},
  author={Pennec, Xavier},
  journal={Journal of Mathematical Imaging and Vision},
  volume={25},
  number={1},
  pages={127--154},
  year={2006},
  publisher={Springer}
}

@article{abuqrais2024riemannian,
  title={A Riemannian covariance for manifold-valued data},
  author={Abuqrais, Meshal and Pigoli, Davide},
  journal={arXiv preprint arXiv:2410.06164},
  year={2024}
}

@article{karcher1977riemannian,
  title={Riemannian center of mass and mollifier smoothing},
  author={Karcher, Hermann},
  journal={Communications on pure and applied mathematics},
  volume={30},
  number={5},
  pages={509--541},
  year={1977},
  publisher={Wiley Online Library}
}

@inproceedings{frechet1948elements,
  title={Les {\'e}l{\'e}ments al{\'e}atoires de nature quelconque dans un espace distanci{\'e}},
  author={Fr{\'e}chet, Maurice},
  booktitle={Annales de l'institut Henri Poincar{\'e}},
  volume={10},
  pages={215--310},
  year={1948}
}

@inproceedings{kim2014canonical,
  title={Canonical correlation analysis on riemannian manifolds and its applications},
  author={Kim, Hyunwoo J and Adluru, Nagesh and Bendlin, Barbara B and Johnson, Sterling C and Vemuri, Baba C and Singh, Vikas},
  booktitle={European conference on computer vision},
  pages={251--267},
  year={2014},
  organization={Springer}
}

@article{petersen2019wasserstein,
  title={Wasserstein covariance for multiple random densities},
  author={Petersen, Alexander and M{\"u}ller, Hans-Georg},
  journal={Biometrika},
  volume={106},
  number={2},
  pages={339--351},
  year={2019},
  publisher={Oxford University Press}
}

@book{lee2018introduction,
  title={Introduction to Riemannian manifolds},
  author={Lee, John M},
  volume={2},
  year={2018},
  publisher={Springer}
}

@book{do1992riemannian,
  title={Riemannian geometry},
  author={do Carmo, Manfredo Perdigao},
  volume={2},
  year={1992},
  publisher={Springer}
}

@article{kendall1990probability,
  title={Probability, convexity, and harmonic maps with small image I: uniqueness and fine existence},
  author={Kendall, Wilfrid S},
  journal={Proceedings of the London Mathematical Society},
  volume={3},
  number={2},
  pages={371--406},
  year={1990},
  publisher={Wiley Online Library}
}

@article{LinIntrinsic,
author = {Zhenhua Lin and Fang Yao},
title = {{Intrinsic Riemannian functional data analysis}},
volume = {47},
journal = {The Annals of Statistics},
number = {6},
publisher = {Institute of Mathematical Statistics},
pages = {3533 -- 3577},
year = {2019},
doi = {10.1214/18-AOS1787},
URL = {https://doi.org/10.1214/18-AOS1787}
}

@article{shao2022intrinsic,
  title={Intrinsic Riemannian functional data analysis for sparse longitudinal observations},
  author={Shao, Lingxuan and Lin, Zhenhua and Yao, Fang},
  journal={The Annals of Statistics},
  volume={50},
  number={3},
  pages={1696--1721},
  year={2022},
  publisher={Institute of Mathematical Statistics}
}

@article{mardia1975statistics,
  title={Statistics of directional data},
  author={Mardia, Kantilal Varichand},
  journal={Journal of the Royal Statistical Society Series B: Statistical Methodology},
  volume={37},
  pages={349--371},
  year={1975},
  publisher={Oxford University Press}
}

@article{fu2025adaptive,
  title={Adaptive Electromagnetic Analysis via Non-Euclidean Manifold Learning for Atmospheric Precipitation Understanding},
  author={Fu, Tian and Yao, Tianliang and Wang, Haoyu and Chen, Bin},
  journal={IEEE Geoscience and Remote Sensing Letters},
  year={2025},
  publisher={IEEE}
}

@incollection{mardia2018directional,
  title={Directional statistics in protein bioinformatics},
  author={Mardia, Kanti V and Foldager, Jesper Illemann and Frellsen, Jes},
  booktitle={Applied Directional Statistics},
  pages={17--40},
  year={2018},
  publisher={Chapman and Hall/CRC}
}

@article{rao1945information,
  title={Information and the accuracy attainable in the estimation of statistical parameters},
  author={Rao, C Radhakrishna and others},
  journal={Bull. Calcutta Math. Soc},
  volume={37},
  number={3},
  pages={81--91},
  year={1945}
}

@article{dai2019analyzing,
  title={Analyzing dynamical brain functional connectivity as trajectories on space of covariance matrices},
  author={Dai, Mengyu and Zhang, Zhengwu and Srivastava, Anuj},
  journal={IEEE transactions on medical imaging},
  volume={39},
  number={3},
  pages={611--620},
  year={2019},
  publisher={IEEE}
}

@article{kendall1984shape,
  title={Shape manifolds, procrustean metrics, and complex projective spaces},
  author={Kendall, David G},
  journal={Bulletin of the London mathematical society},
  volume={16},
  number={2},
  pages={81--121},
  year={1984},
  publisher={Wiley Online Library}
}

@article{lu2015clustering,
  title={Clustering tree-structured data on manifold},
  author={Lu, Na and Miao, Hongyu},
  journal={IEEE transactions on pattern analysis and machine intelligence},
  volume={38},
  number={10},
  pages={1956--1968},
  year={2015},
  publisher={IEEE}
}

@article{cheng1989historical,
  title={An historical note on finite rotations},
  author={Cheng, Hui and Gupta, Krishna C},
  journal={Journal of Applied Mechanics},
  volume={56},
  number={1},
  pages={139--145},
  year={1989},
  publisher={American Society of Mechanical Engineers Digital Collection}
}

@article{afsari2011riemannian,
  title={Riemannian L\^{}$\{$p$\}$ center of mass: existence, uniqueness, and convexity},
  author={Afsari, Bijan},
  journal={Proceedings of the American Mathematical Society},
  volume={139},
  number={2},
  pages={655--673},
  year={2011}
}

@article{JMLR:v17:16-177,
    author = {James Townsend and Niklas Koep and Sebastian Weichwald},
    journal = {Journal of Machine Learning Research},
    number = {137},
    pages = {1–5},
    title = {Pymanopt: A Python Toolbox for Optimization on Manifolds using Automatic Differentiation},
    url = {http://jmlr.org/papers/v17/16-177.html},
    volume = {17},
    year = {2016}
}

@article{fletcher2004principal,
  title={Principal geodesic analysis for the study of nonlinear statistics of shape},
  author={Fletcher, P Thomas and Lu, Conglin and Pizer, Stephen M and Joshi, Sarang},
  journal={IEEE transactions on medical imaging},
  volume={23},
  number={8},
  pages={995--1005},
  year={2004},
  publisher={IEEE}
}

@article{bhattacharya2005large,
  title={Large sample theory of intrinsic and extrinsic sample means on manifolds---II},
  author={Bhattacharya, Rabi and Patrangenaru, Vic},
  journal={The Annals of Statistics},
  volume={33},
  number={3},
  pages={1225--1259},
  year={2005},
  doi={10.1214/009053605000000093}
}

@article{bhattacharya2003large,
  title={Large sample theory of intrinsic and extrinsic sample means on manifolds},
  author={Bhattacharya, Rabi and Patrangenaru, Vic},
  journal={The Annals of Statistics},
  volume={31},
  number={1},
  pages={1--29},
  year={2003},
  doi={10.1214/aos/1046294456}
}

@article{miolane2020geomstats,
  title={Geomstats: A python package for riemannian geometry in machine learning},
  author={Miolane, Nina and Guigui, Nicolas and Le Brigant, Alice and Mathe, Johan and Hou, Benjamin and Thanwerdas, Yann and Heyder, Stefan and Peltre, Olivier and Koep, Niklas and Zaatiti, Hadi and others},
  journal={Journal of Machine Learning Research},
  volume={21},
  number={223},
  pages={1--9},
  year={2020}
}

@inproceedings{guigui2021parallel,
  title={Parallel transport on kendall shape spaces},
  author={Guigui, Nicolas and Maignant, Elodie and Trouv{\'e}, Alain and Pennec, Xavier},
  booktitle={International Conference on Geometric Science of Information},
  pages={103--110},
  year={2021},
  organization={Springer}
}

@article{radau2009evaluation,
  title={Evaluation framework for algorithms segmenting short axis cardiac MRI.},
  author={Radau, Perry and Lu, Yingli and Connelly, Kim and Paul, Gideon and Dick, Alexander J and Wright, Graham A},
  journal={The MIDAS Journal},
  year={2009},
  publisher={NumFOCUS-Insight Software Consortium (ITK)}
}

@article{maron2014hypertrophic,
  title={Hypertrophic cardiomyopathy: present and future, with translation into contemporary cardiovascular medicine},
  author={Maron, Barry J and Ommen, Steve R and Semsarian, Christopher and Spirito, Paolo and Olivotto, Iacopo and Maron, Martin S},
  journal={Journal of the American College of Cardiology},
  volume={64},
  number={1},
  pages={83--99},
  year={2014},
  publisher={American College of Cardiology Foundation Washington, DC}
}

@article{eshaghian2007cardiac,
  title={Cardiac amyloidosis: new insights into diagnosis and management},
  author={Eshaghian, Shervin and Kaul, Sanjay and Shah, Prediman K},
  journal={Reviews in Cardiovascular Medicine},
  volume={8},
  number={4},
  pages={189--199},
  year={2007},
  publisher={IMR Press}
}

@misc{mlsp-2014-mri,
    author = {Eduardo Castro and joycenv and Mustafa Cetin and Navin Cota and Rogers F Silva and Vince},
    title = {MLSP 2014 Schizophrenia Classification Challenge},
    year = {2014},
    howpublished = {\url{https://kaggle.com/competitions/mlsp-2014-mri}},
    note = {Kaggle}
}
\bibliographystyle{plainnat}


\appendix

\section{Riemannian Geometry}\label{app:ManNotes}
\subsection{Sphere}\label{app:spheredetail}
Let $\mcM=\mbS^d$ be the $d$-dimensional unit sphere in $\mbR^{d+1}$ with the metric induced from the ambient space. The exponential map is given by
$$ \exp_p(v) = \cos(\|v\|)\, p + \sin(\|v\|)\, \frac{v}{\|v\|}.$$ The logarithm map is given by $$\log_p(q) = \frac{\theta}{\sin(\theta)}\, (q-\cos(\theta) p),$$ with $q \notin \{p,-p\}$ and $\theta=\arccos\langle p,q\rangle$. Lastly, the parallel transport of a vector $v$ along a geodesic from $p$ to $q$ is given by $$\Gamma_p^q(v)=v-\frac{\langle \log_pq,v\rangle_p}{\theta^2}\, (\exp^{-1}_pq+\exp^{-1}_qp),$$ with $\theta$ as above.
\subsection{SPDM}\label{app:spdmdetail}
Let $\mcM$ be the space of SPD matrices with the affine invariant metric (the Rao-Fisher metric). The exponential map is given by $$\exp_p(v) = p^{1/2}\,\text{Exp}\!\left(p^{-1/2}vp^{-1/2}\right) p^{1/2},$$ and the log map is given by $$\log_q(p) = q^{1/2}\,\text{Log}\!\left(q^{-1/2}p\, q^{-1/2}\right) q^{1/2},$$ where $\text{Exp}$ and $\text{Log}$ are the matrix exponential and logarithm, respectively. The parallel transport of tangent vector $v\in T_p\mcM$ along a geodesic connecting $p$ and $q$ is given by $$\Gamma_p^qv=(qp^{-1})^{1/2}v(p^{-1}q)^{1/2}.$$
\subsection{Kendall's shape space}\label{app:kendalldetail}
Define the ordered k-tuple landmark space $$\mcL_{m,k}=\{X\in\mbR^{m\times k}|\dim(\text{span}(X))=m, k>m\}.$$
For an object $X\in\mcL_{m,k}$, we first remove the effect of translation and scaling $$\hat X=\frac{X-\bar x\,\mathbf{1}_{k}^T}{\|X-\bar x\,\mathbf{1}_{k}^T\|_F},$$
where $\bar x\in\mbR^m$ is the mean across landmarks (i.e., the row-wise mean over the $k$ columns) and $\|\cdot\|_F$ is the Frobenius norm. This defines the preshape space $$\mcS_{m}^k=\Big\{X\in\mcL_{m,k}\,\Big|\,X\mathbf{1}_k=0,\ \|X\|_F=1\Big\},$$ which lies on a hypersphere $\mcS^{m(k-1)-1}$. The Kendall's shape space is further defined by taking quotient space of preshape space under the rotation group $$\Sigma_m^k = \mcS_m^k/SO(m).$$

For each element $\tilde X\in\Sigma_m^k$, there corresponds an orbit in preshape space $\tilde X=[\hat X]=\{O\hat X|O\in SO(m)\}$. Therefore, the exponential map in $\Sigma_{m}^k$ is defined (on a chosen representative $\hat X_1$ of $\tilde X_1$) by $$\exp_{\tilde X_1}(v)=\cos(\|v\|_F)\hat X_1+\sin(\|v\|_F)\frac{v}{\|v\|_F}.$$
The logarithm map is given by $$\log_{\tilde X_1}(\tilde X_2)=\frac{\theta}{\sin(\theta)}(O^*\hat X_2-\cos(\theta)\hat X_1),$$
where $\theta=\arccos(\langle\hat X_1, O^*\hat X_2\rangle_F)$ and $O^*=\argmax_{O\in SO(m)}\langle\hat X_1,O\hat X_2\rangle_F.$ The spherical parallel transport cannot be applied on Kendall's shape space directly, but transported vector can be approximated using a pole ladder algorithm \citep{guigui2021parallel}.
\section{Canonical Correlation}\label{app:cca}
Here we include more details from \citet{kim2014canonical}. The geodesic submanifold \cite{fletcher2004principal} $S\subset\mcM$ is defined as $S=\exp_p(\mathrm{span}(\{b_i\})\cap U)$ with $U\subset T_p\mcM$ and $b_i\in T_p\mcM$. The geodesic submanifold is the subspace of $\mcM$ generated by pushing the span of some vectors $\{b_i\}$ onto the manifold. Note that when $|\{b_i\}|=1$ then $S$ is simply a geodesic. Note that once an appropriate $\mcR^{CCA}_{X,Y}$ is obtained, one can determine the covariance by multiplying by the standard deviations of both sets of coefficients.

Notice that projection onto the geodesic submanifold can be expressed in terms of the inner product, thus using Euclidean metric, the Log-Euclidean framework of CCA is actually equivalent to the manifold-valued CCA:
\begin{align*}
    \mcR^{CCA}_{X,Y}&=\max_{W_x,W_y}\frac{\sum_i\langle W_x,\log_{\mu_x}(x_i)\rangle\langle W_y, \log_{\mu_y}(y_i)\rangle}{\sqrt{\sum_i\langle W_x,\log_{\mu_x}(x_i)\rangle^2}\sqrt{\sum_i\langle W_y, \log_{\mu_y}(y_i)\rangle^2}}\\
    &=\max_{W_x,W_y}\frac{W^T_x \Cov(X,Y)\,W_y}{\sqrt{W^T_x \Cov(X,X)\,W_x}\sqrt{W^T_y \Cov(Y,Y)\,W_y}},
\end{align*}
where $\Cov(X,Y)=\frac{1}{n}\sum_i[(\log_{\mu_x}x_i)(\log_{\mu_y}y_i)^T]$ and similar for $\Cov(X,X)$ and $\Cov(Y,Y)$. Unlike the general cross-covariance, which accounts for all possible linear combination between datasets, the CCA computes a normalized maximal cross-correlation, thus it tends to overestimate the correlation when datasets share the same coordinates.
\section{Euclidean Covariance and Correlation}
Let $(\Omega, \mcF, \mbP)$  be a probability space and let $X$ and $Y$ be two integrable random variables such that $X, Y\in L^1(\Omega, \mcF, \mbP)$. The expectation of $X$ is defined as $$\mbE(X)=\int_\Omega X(\omega)d\mbP(\omega),$$ and similary for $\mbE(Y)$. The covariance of $X$ and $Y$ is given by $$\text{Cov}(X,Y)=\mbE((X-\mbE(X))(Y-\mbE(Y))).$$
Equivalently, $$\text{Cov}(X,Y)=\mbE(XY)-\mbE(X)\mbE(Y).$$
The covariance has the following properties:
\begin{enumerate}
    \item Variance: $\Cov(X,X)=\text{Var}(X)$;
    \item Symmetry: $\Cov(X,Y)=\Cov(Y,X)$;
    \item Scaling: $\Cov(aX,Y)=a\Cov(X,Y)$;
    \item Uncorrelatedness: If $X$ and $Y$ are independent, then $\Cov(X,Y)=0$;
    \item Invariance to constant: $\Cov(X+c,Y)=\Cov(X,Y)$;
    \item Variance of a constant: $\Cov(X,c)=0$;
    \item Additivity: $\Cov(X_1+X_2,Y)=\Cov(X_1,Y)+\Cov(X_2,Y)$.
    More generally, $$\Cov\left(\sum_ia_iX_i,\sum_jb_jY_j\right)=\sum_i\sum_ja_ib_j\Cov(X_i,Y_j).$$
\end{enumerate}
With the variance given by $$\text{Var}(X)=\Cov(X,X),$$ the Pearson correlation is defined as $$\rho(X,Y)=\frac{\Cov(X,Y)}{\sqrt{\text{Var}(X)}\sqrt{\text{Var}(Y)}}$$
Thus we have the following properties for correlation:
\begin{enumerate}
    \item Bounded: $\rho(X,Y)\in[-1,1]$;
    \item Symmetry: $\rho(X,Y)=\rho(Y,X)$;
    \item Scale invariance: $\rho(aX+b, cY+d)=\text{sign}(ac)\rho(X,Y)$;
    \item Self-correlation: if X is nondegenerate, $\rho(X,X)=1$.
\end{enumerate}
\section{Covariance Under Basis Rotation}

The parallel transport can be interpreted as a rotation of basis when the manifold is viewed as embedded in some ambient space, i.e. $\mbR^n$. Suppose that we want to move a vector $w$ from the tangent space of $p\in\mcM$ to another tangent space of $q\in\mcM$ along the geodesic connecting $p$ and $q$. The vector $w\in T_p\mcM$ can be written as $$w=\sum_na_ie_i^p,$$ where $\{e_i^p\}$ is a basis of $T_p\mcM$, then under the parallel transport, the corresponding basis on $T_q\mcM$ is mapped by $e_i^q=De_i^p$, where $D\in SO(n)$ and the transported vector $w'\in T_q\mcM$ is $$w'=\sum_na_ie_i^q=Dw.$$ Therefore, we identify $D$ with parallel transport matrix, denoted $D\simeq\Gamma_\mu^\nu$.

Under this setting, for two random variables $X,Y\in\mcM$ with respective mean $\mu,\nu$, we have random vectors $u=\log_\mu X\in T_\mu\mcM$ and $v=\log_\nu Y\in T_\nu\mcM$. Let $\gamma:[0,1]\to\mcM$ be a geodesic with $\gamma(0)=\mu$, $\gamma(1)=\nu$, and for some $t\in[0,1]$, the corresponding matrices to parallel transport is given by \begin{align*}
    A\simeq \Gamma_{\gamma(0)}^{\gamma(1)},\quad B(t)\simeq \Gamma_{\gamma(0)}^{\gamma(t)}, \quad C(t)\simeq \Gamma_{\gamma(t)}^{\gamma(1)},
\end{align*}
where $A,B,C\in SO(n)$, and hence we also have $A=C(t)B(t)$ and \begin{align*}
    A^T\simeq \Gamma^{\gamma(0)}_{\gamma(1)},\quad B^T(t)\simeq \Gamma^{\gamma(0)}_{\gamma(t)}, \quad C^T(t)\simeq \Gamma^{\gamma(t)}_{\gamma(1)},
\end{align*}

The Riemannian cross-covariance at $p=\alpha(t)$ can be defined alternatively as \begin{align*}
    \Sigma_{X,Y}(p)&=\mbE((\Gamma_\mu^p\log_\mu X)(\Gamma_\nu^p\log_\nu Y)^T)\\
    &=\mbE((\Gamma_{\gamma(0)}^{\gamma(t)}u)(\Gamma_{\gamma(1)}^{\gamma(t)}v)^T)\\
    &=\mbE((B(t)u)(C(t)^Tv)^T)\\
    &=\mbE(B(t)uv^TC(t)).
\end{align*}

It is obvious that the covariance matrix is subject to the choice of the p, however for the Riemannian covariance, we have \begin{align*}
    \Cov_{X,Y}(p)&=\tr(\Sigma_{X,Y}(p))\\
    &=\tr(\mbE(B(t)uv^TC(t))\\
    &=\mbE(\tr(C(t)B(t)uv^T))\\
    &=\mbE(\tr(Auv^T))\\
    &=\tr(\mbE((\Gamma_{\mu}^\nu\log_\mu X)(\log_\nu Y)^T)\\
    &=\Cov_{X,Y}(\nu),
\end{align*}

similarly, we also have $\Cov_{X,Y}(p)=\Cov_{X,Y}(\mu)$. Since it holds for any $p=\alpha(t)$, it is enough to show the invariance of Riemannian covariance along the geodesic $\alpha$.

\section{Additional Proofs}\label{ss:AdProof}

\begin{proof}[Proof of Proposition~\ref{prop:ap-variant}]
Fix two distinct points $p,q \in \mathcal M$, and write $\xi = \log_p(q)$.
Since $\mathcal M$ has nonzero curvature, the logarithmic map does not commute
with parallel transport. In particular, for any $x$ in a sufficiently small
normal neighborhood of $p$, the logarithmic map admits the expansion
\begin{equation}
\label{eq:log_change_base}
\log_q(x)
=
\Gamma^q_p \big(\log_p(x)-\xi\big)
+
C_p(\log_p(x),\xi)
+
O\!\left(\|\log_p(x)\|^3+\|\xi\|^3\right),
\end{equation}
where $\Gamma^q_p$ denotes parallel transport along the geodesic from $p$ to
$q$, the leading term $-\Gamma^q_p\xi$ is the translation in the basepoint
(present already in the Euclidean limit), and $C_p(\cdot,\cdot)$ is a bilinear
term involving the Riemann curvature tensor at $p$. The curvature term
vanishes identically if and only if $\mathcal M$ is flat.

Applying \eqref{eq:log_change_base} to $X$ and $Y$ and centering, the
translation $-\Gamma^q_p\xi$ cancels in the centered second moment, so
\begin{align*}
    \Sigma^{A\&P}_{X,Y}(q)
    &=\Gamma^q_p\,\Sigma^{A\&P}_{X,Y}(p)\,(\Gamma^q_p)^\top
    +D_{XY},
\end{align*}
where the correction term $D_{XY}$ collects the curvature contributions
\begin{align*}
D_{XY}&=\mathbb E\big[
\bar C_p(\log_p X,\xi)(\log_p Y-\mbE\log_p Y)^\top \\
&\quad+(\log_p X-\mbE\log_p X)\bar C_p(\log_p Y,\xi)^\top\big]+O(\cdot),
\end{align*}
with $\bar C_p$ involving the Riemann curvature tensor at $p$.

Taking traces and using that parallel transport is an isometry (so $\operatorname{tr}(\Gamma^q_p A(\Gamma^q_p)^\top)=\operatorname{tr}(A)$),
\[
\operatorname{tr}\!\big(\Sigma^{A\&P}_{X,Y}(q)\big)=\operatorname{tr}\!\big(\Sigma^{A\&P}_{X,Y}(p)\big)+\operatorname{tr}(D_{XY}).
\]
Since the curvature tensor is nonzero, one can choose distributions of $X$
and $Y$ for which $\operatorname{tr}(D_{XY})\neq 0$. Consequently,
\[
\operatorname{tr}\!\big(\Sigma^{A\&P}_{X,Y}(q)\big)
\neq
\operatorname{tr}\!\big(\Sigma^{A\&P}_{X,Y}(p)\big),
\]
which proves that the Abuqrais-Pigoli covariance depends on the choice of
footpoint.
\end{proof}

\begin{proof}[Proof of Theorem~\ref{thm:ftinvar}]
\begin{align*}
        \tr\left[\Sigma_{X,Y}(p)\right] &=\tr\left[\mbE\left[(\Gamma_\mu^p\log_\mu X)(\Gamma_\nu^p\log_\nu Y)^T\right]\right]\\
        &=\mbE\left[\tr\left[(\Gamma_\mu^p\log_\mu X)(\Gamma_\nu^p\log_\nu Y)^T\right]\right]\\
        &=\mbE\left[\tr\left[(\Gamma_p^q\Gamma_\mu^p\log_\mu X)(\Gamma_p^q\Gamma_\nu^p\log_\nu Y)^T\right]\right]\\
        &=\mbE\left[\tr\left[(\Gamma_\mu^q\log_\mu X)(\Gamma_\nu^q\log_\nu Y)^T\right]\right]\\
        &=\tr\left[\mbE\left[(\Gamma_\mu^q\log_\mu X)(\Gamma_\nu^q\log_\nu Y)^T\right]\right]\\
        &=\tr\left[\Sigma_{X,Y}(q)\right]. \\
    \end{align*}    
     Here the diagonal elements of $\left[(\Gamma_\mu^p\log_\mu X)(\Gamma_\nu^p\log_\nu Y)^T\right]$ are $\langle\Gamma_\mu^p\log_\mu X,e_i\rangle\cdot \langle\Gamma_\nu^p\log_\nu Y,e_i\rangle$ where $\{e_i\}$ is a local orthonormal basis in $T_p\mcM$. Noting that parallel transport is done with respect to a basis and thus applying the parallel transport we have $\langle\Gamma_p^q\Gamma_\mu^p\log_\mu X,\Gamma_p^qe_i\rangle\cdot \langle\Gamma_p^q \Gamma_\nu^p\log_\nu Y,\Gamma_p^qe_i\rangle$. Since $p,q,\mu,\nu$ are all on the same geodesic we have that $\langle\Gamma_p^q\Gamma_\mu^p\log_\mu X,\Gamma_p^qe_i\rangle\cdot \langle\Gamma_p^q \Gamma_\nu^p\log_\nu Y,\Gamma_p^qe_i\rangle=\langle\Gamma_\mu^q\log_\mu X,\Gamma_p^qe_i\rangle\cdot \langle \Gamma_\nu^q\log_\nu Y,\Gamma_p^qe_i\rangle$.
    Parallel transport preserves inner products thus the trace (sum) is unaffected. Note that since $p,q,\mu$ are on the same geodesic $\Gamma_p^q\Gamma_\mu^p v= \Gamma_\mu^q v$, otherwise this does not generally hold.    
    
\end{proof}

\begin{proof}[Proof of Proposition~\ref{prop:prop1}]
Set $U:=\Gamma^{\nu}_{\mu}\log_\mu X$ and $V:=\log_\nu Y$, both elements of $T_\nu M$. Since $\Gamma^{\nu}_{\mu}$ is a linear isometry, $\|U\|_\nu=d(\mu,X)$ and $\|V\|_\nu=d(\nu,Y)$, so the second-moment hypotheses give $\mathbb{E}\|U\|_\nu^{2},\mathbb{E}\|V\|_\nu^{2}<\infty$.
For $u,v\in T_\nu M$, define the rank-one operator $u\otimes v\in\mathcal{H}$ by $(u\otimes v)(w)=\langle v,w\rangle_\nu u$. A direct expansion in any orthonormal basis $\{e_i\}_{i=1}^{d}$ of $T_\nu M$ gives $\|u\otimes v\|_{\mathcal{H}}=\|u\|_\nu\|v\|_\nu$ together with the pairing identity $\langle u\otimes v,A\rangle_{\mathcal{H}}=\langle u,Av\rangle_\nu$ for every $A\in\mathcal{H}$, which specializes to $\langle u\otimes v,\text{id}_{T_\nu M}\rangle_{\mathcal{H}}=\langle u,v\rangle_\nu$.
Continuity of $(u,v)\mapsto u\otimes v$ makes
$\omega\mapsto U(\omega)\otimes V(\omega)$ strongly measurable in $\mathcal{H}$,
and by Cauchy--Schwarz
\[
\mathbb{E}\|U\otimes V\|_{\mathcal{H}}=\mathbb{E}[\|U\|_\nu\|V\|_\nu]
\le(\mathbb{E}\|U\|_\nu^{2})^{1/2}(\mathbb{E}\|V\|_\nu^{2})^{1/2}<\infty,
\]
so $U\otimes V$ is Bochner integrable and $\widetilde{C}_{X,Y}=\mathbb{E}[U\otimes V]
\in\mathcal{H}$ is well defined. Since $A\mapsto\langle Ae_j,e_i\rangle_\nu$ is
a bounded linear functional on $\mathcal{H}$, it commutes with the Bochner
expectation, and a direct calculation gives
$\langle\widetilde{C}_{X,Y}e_j,e_i\rangle_\nu
=\mathbb{E}[\langle V,e_j\rangle_\nu\langle U,e_i\rangle_\nu]
=(\Sigma_{X,Y})_{ij}$, so $\Sigma_{X,Y}$ is precisely the matrix of
$\widetilde{C}_{X,Y}$ in the basis $\{e_i\}$. The trace
$\tr(\cdot)=\langle\cdot,\text{id}_{T_\nu M}\rangle_{\mathcal{H}}$ is itself a bounded linear
functional on $\mathcal{H}$, and applying it under the expectation together
with the pairing identity above yields
\[
\tr(\Sigma_{X,Y})
=\langle\widetilde{C}_{X,Y},\text{id}_{T_\nu M}\rangle_{\mathcal{H}}
=\mathbb{E}\langle U\otimes V,\text{id}_{T_\nu M}\rangle_{\mathcal{H}}
=\mathbb{E}\langle U,V\rangle_\nu,
\]
which is the claimed identity $\Cov_{X,Y}=\langle\widetilde{C}_{X,Y},\text{id}_{T_\nu M}\rangle_{\mathcal{H}}$.

\end{proof}

\begin{proof}[Proof of Proposition~\ref{prop:properties}]

    \textbf{\textit{Proof of Fr\'echet variance special case.}}

    As we only have one mean, the geodesic reduces to a single point $\mu$. Note that $\Gamma_\mu^\mu$ is the identity.
    \begin{align*}
        \tr\left[\Sigma_{X,X}(\mu)\right] &=\tr\left[\mbE\left[(\Gamma_\mu^\mu\log_\mu X)(\Gamma_\mu^\mu\log_\mu X)^T\right]\right]\\
        &=\tr\left[\mbE\left[(\log_\mu X)(\log_\mu X)^T\right]\right]\\
        &=\mbE\left[\tr\left[(\log_\mu X)(\log_\mu X)^T\right]\right]\\
        &=\mbE\left[\|\log_\mu X\|^2_\mu\right]\\
        &=\mbE\left[d^2(\mu, X)\right]=\sigma^2_X(\mu)\\
    \end{align*}    
    
    The diagonal elements of $\left[(\log_\mu X)(\log_\mu X)^T\right]$ are $\langle\log_\mu X,e_i\rangle\cdot \langle\log_\mu X,e_i\rangle$ where $\{e_i\}$ are local basis in $T_\mu\mcM$. The trace is thus, $\sum \langle\log_\mu X,e_i\rangle^2=\|\log_\mu X\|^2_\mu$.
    
    \textbf{\textit{Proof of symmetry.}}
    \begin{align*}
        \Cov_{X,Y}(p) &= \tr(\Sigma_{X,Y}(p)) \\
        &=\tr\left[\mbE\left[(\Gamma_\mu^p\log_\mu X)(\Gamma_\nu^p\log_\nu Y)^T\right]\right]\\
        &=\mbE\left[\tr\left[(\Gamma_\mu^p\log_\mu X)(\Gamma_\nu^p\log_\nu Y)^T\right]\right]\\
         &=\mbE\left[\tr\left[(\Gamma_\nu^p\log_\nu Y)(\Gamma_\mu^p\log_\mu X)^T\right]\right]\\
        &=\tr\left[\mbE\left[(\Gamma_\nu^p\log_\nu Y)(\Gamma_\mu^p\log_\mu X)^T\right]\right]\\
        &= \Cov_{Y,X}(p)
    \end{align*}

    \textbf{\textit{Proof of degeneracy. }}
    
    Suppose $\Cov_{X,X}(\mu)=0$. By the Fr\'echet variance special case proved above, $\Cov_{X,X}(\mu)=\mbE\|\log_\mu X\|_\mu^2$. Vanishing of a non-negative expectation forces $\|\log_\mu X\|_\mu=0$ almost surely, hence $X=\mu$ a.s. Thus $X$ is degenerate.

    \textbf{\textit{Proof of uncorrelatedness.}}
    
    Suppose $X$ and $Y$ are statistically independent random variables on a Riemannian manifold, i.e., $X\perp\!\!\!\perp Y$.
    Under Assumption~\ref{A1}, the Fr\'echet means $\mu,\nu$ are unique interior minimizers, so the first-order optimality of the Fr\'echet variance functions yields
    \[
    \mbE[\log_\mu X]=0,\qquad \mbE[\log_\nu Y]=0.
    \]
    Independence of $X,Y$ implies independence of $\log_\mu X$ and $\log_\nu Y$, hence
    \begin{align*}
        \Sigma_{X,Y}(p)
        &=\mbE\left[(\Gamma_\mu^p\log_\mu X)(\Gamma_\nu^p\log_\nu Y)^\top\right]\\
        &=\big(\Gamma_\mu^p\,\mbE[\log_\mu X]\big)\big(\Gamma_\nu^p\,\mbE[\log_\nu Y]\big)^\top\\
        &=0,
    \end{align*}
    and consequently $\Cov_{X,Y}(p)=\tr(\Sigma_{X,Y}(p))=0$.

    \textbf{\textit{Proof that a constant does not vary.}}

    This is clear as $\log_qq=0$.

    \textit{\textbf{Proof of scaling.}}

    First, we must define scaling. Given a constant $a\in\mbR$ define the scaling action of $a$ on $X$ as $(a,X)\coloneq \exp_\mu(a\log_\mu(X))$. This scaling requires a footpoint which we choose to be $\mu$ here. The mean $\mu$ is the canonical choice as it is the only choice which does not affect the center of the density. The scaling $a\log_\mu(X)$ is well-defined as $T_\mu\mcM$ is a vector space. Note that $\mu$ remains the Fr\'echet mean of $(a,X)$: by linearity, $\mbE[\log_\mu(a,X)]=a\,\mbE[\log_\mu X]=0$ (since $\mu$ is the Fr\'echet mean of $X$), which is the first-order optimality condition; uniqueness follows from Assumption~\ref{A1}. Given Assumption~\ref{A1}, the support of $X$ and $(a,X)$ are both required to be within a ball with appropriate radius. Thus, $|a|$ has an upper bound dependent on the support of the original density. Given this setup we have,

    \begin{align*}
        \Cov_{(a,X),Y}(p) &= \tr(\Sigma_{(a,X),Y}(p)) \\
        &=\tr\left[\mbE\left[(\Gamma_\mu^p\, a\log_\mu X)(\Gamma_\nu^p\log_\nu Y)^T\right]\right]\\
        &=a\,\tr\left[\mbE\left[(\Gamma_\mu^p\log_\mu X)(\Gamma_\nu^p\log_\nu Y)^T\right]\right]\\
        &= a\,\Cov_{X,Y}(p).
    \end{align*}

\end{proof}

\begin{proof}[Proof of Proposition~\ref{prop:RieCor}]

    First, we have $$\Cov_{X,Y}(p)=\tr(\mbE[(\Gamma_\mu^p\log_\mu X)(\Gamma_\nu^p\log_\nu Y)^T]).$$

    Using Jensen's inequality and Cauchy-Schwarz inequality,
    \begin{align*}
        |\Cov_{X,Y}(p)|&=|\tr(\mbE[(\Gamma_\mu^p\log_\mu X)(\Gamma_\nu^p\log_\nu Y)^T])|\\
        &=|\mbE[\tr((\Gamma_\mu^p\log_\mu X)(\Gamma_\nu^p\log_\nu Y)^T)]|\\
        &\le \mbE[|\tr((\Gamma_\mu^p\log_\mu X)(\Gamma_\nu^p\log_\nu Y)^T)|]\\
        &\le \mbE(\|\Gamma_\mu^p\log_\mu X\|_p\|\Gamma_\nu^p\log_\nu Y\|_p)\\
        &\le (\mbE\|\Gamma_\mu^p\log_\mu X\|_p^2)^{1/2}(\mbE\|\Gamma_\nu^p\log_\nu Y\|_p^2)^{1/2}\\
        &=(\mbE\|\log_\mu X\|_\mu^2)^{1/2}(\mbE\|\log_\nu Y\|_\nu^2)^{1/2}\\
        &=\Cov_{X,X}(\mu)^{1/2}\Cov_{Y,Y}(\nu)^{1/2}.
    \end{align*}

    Therefore, $$\mcR_{X,Y}(p)=\frac{\Cov_{X,Y}(p)}{\sqrt{\Cov_{X,X}(\mu)}\sqrt{\Cov_{Y,Y}(\nu)}}\in [-1,1].$$
\end{proof}

\begin{proof}[Proof of Theorem~\ref{thm:ReduceToEuclid}]
This is clear noting that in Euclidean space $\log_pq=q-p$ and $\Gamma_p^qv=v$.
        \begin{align*}
        \Cov_{X,Y}(p) &= \tr(\Sigma_{X,Y}(p)) \\
        &=\tr\left[\mbE\left[(\Gamma_\mu^p\log_\mu X)(\Gamma_\nu^p\log_\nu Y)^T\right]\right]\\
        &=\tr\left[\mbE\left[( X-\mu)(Y-\nu)^T\right]\right]\\
    \end{align*}
\end{proof}
\section{Consistency and Asymptotic Normality Proofs}\label{app:consproofs}

\begin{proof}[Proof of Theorem~\ref{thm:Sigma_consistency}]
Define $Z_i\coloneq U_iV_i^\top$ where $U_i\coloneq \Gamma_\mu^p\log_\mu(X_i)$ and $V_i\coloneq \Gamma_\nu^p\log_\nu(Y_i)$.
Then $\Sigma_{X,Y}(p)=\mbE[Z_1]$ and the oracle sample average $\bar Z_n\coloneq \frac1n\sum_{i=1}^n Z_i$ satisfies $\bar Z_n\to \mbE[Z_1]$ almost surely by the strong law of large numbers, provided $\mbE\|Z_1\|_F<\infty$.
Given that $\|Z_1\|_F\le \|U_1\|\,\|V_1\|$ and parallel transport is an isometry, hence by Cauchy--Schwarz and (A2),
\[
\begin{aligned}
\mbE\|Z_1\|_F
&\le \mbE\|\log_\mu(X)\|_\mu\,\|\log_\nu(Y)\|_\nu\\
&\le \sqrt{\mbE\|\log_\mu(X)\|_\mu^2}\,\sqrt{\mbE\|\log_\nu(Y)\|_\nu^2}
<\infty.
\end{aligned}
\]
Therefore $\bar Z_n\xrightarrow{a.s.}\Sigma_{X,Y}(p)$.

It remains to show that $\|\widehat{\Sigma}_{X,Y}(p)-\bar Z_n\|_F\to 0$ almost surely.
Write
\[
\begin{aligned}
\widehat{\Sigma}_{X,Y}(p)-\bar Z_n
&=
\frac1n\sum_{i=1}^n (\hat U_i\hat V_i^\top - U_iV_i^\top)\\
&=
\frac1n\sum_{i=1}^n
\Big[(\hat U_i-U_i)\hat V_i^\top + U_i(\hat V_i-V_i)^\top\Big].
\end{aligned}
\]
Thus by triangle inequality and submultiplicativity,
\begin{equation}\label{eq:plug_decomp}
\begin{aligned}
\|\widehat{\Sigma}_{X,Y}(p)-\bar Z_n\|_F
&\le
\underbrace{\frac1n\sum_{i=1}^n \|\hat U_i-U_i\|\,\|\hat V_i\|}_{T_{n,1}}
\\
&\quad+
\underbrace{\frac1n\sum_{i=1}^n \|U_i\|\,\|\hat V_i-V_i\|}_{T_{n,2}}.
\end{aligned}
\end{equation}

It suffices to control $T_{n,1}$ since the bound for $T_{n,2}$ follows by the same argument with $(X,\mu,U)$ replaced by $(Y,\nu,V)$.
Under (A6), the map $(m,x)\mapsto \Gamma_m^p\log_m(x)$ is continuously differentiable on $\U\times\U$ with bounded derivative.
Hence, there exists (random but a.s.\ finite) $L$ such that for all large $n$ (once $\hat\mu\in\U$),
\[
\begin{aligned}
\|\hat U_i-U_i\|
&=
\|\Gamma_{\hat\mu}^p\log_{\hat\mu}(X_i)-\Gamma_{\mu}^p\log_{\mu}(X_i)\|\\
&\le
L\cdot d(\hat\mu,\mu),
\qquad\text{for all }i.
\end{aligned}
\]
Therefore,
\[
T_{n,1}
\le
L\, d(\hat\mu,\mu)\cdot \frac1n\sum_{i=1}^n \|\hat V_i\|.
\]
By Lemma~\ref{lem:mean_consistency}, $d(\hat\mu,\mu)\to 0$ almost surely.
Also $\|\hat V_i\|=\|\log_{\hat\nu}(Y_i)\|_{\hat\nu}$ since $\Gamma_{\hat\nu}^p$ is an isometry.
Because $\hat\nu\to\nu$ a.s.\ and $(m,y)\mapsto \|\log_m(y)\|_m$ is continuous on $\U\times\U$, we have $\|\log_{\hat\nu}(Y_i)\|_{\hat\nu}\to \|\log_\nu(Y_i)\|_\nu$ pointwise in $i$ along almost every sample path for large $n$.
Moreover, by (A2) and uniform boundedness of derivatives on $\U$, one can dominate $\|\log_{\hat\nu}(Y_i)\|_{\hat\nu}$ by $C(1+\|\log_\nu(Y_i)\|_\nu)$ for all large $n$ and some constant $C$ (a standard local Lipschitz bound).
Hence, by the strong law of large numbers,
\[
\frac1n\sum_{i=1}^n \|\hat V_i\|
\xrightarrow{a.s.}
\mbE\|V\| <\infty.
\]
Therefore $T_{n,1} \xrightarrow{a.s.} 0$.
A symmetric argument yields $T_{n,2} \xrightarrow{a.s.} 0$.
Plugging into \eqref{eq:plug_decomp} gives $\|\widehat{\Sigma}_{X,Y}(p)-\bar Z_n\|_F \xrightarrow{a.s.} 0$.
Combining with $\bar Z_n \xrightarrow{a.s.} \Sigma_{X,Y}(p)$ completes the proof.
Finally, $\widehat\Cov_{X,Y}(p)=\tr(\widehat\Sigma_{X,Y}(p)) \xrightarrow{a.s.} \tr(\Sigma_{X,Y}(p))$ by continuity of trace.
\end{proof}

\begin{proof}[Proof of Lemma~\ref{lem:Sigma_if}]
Recall the transport maps $\phi,\psi$ and their plug-in versions from Assumption (A6).
Define the population functional $T(\mu,\nu)\coloneq \mbE[\phi(X)\psi(Y)^\top]$ and the empirical counterpart
\[
T_n(m,n)
\coloneq \frac1n\sum_{i=1}^n \phi_m(X_i)\psi_n(Y_i)^\top,
\]
for $(m,n)\in\U\times\U$,
so that $\Sigma_{X,Y}(p)=T(\mu,\nu)$ and $\widehat\Sigma_{X,Y}(p)=T_n(\hat\mu,\hat\nu)$.
Write $\widehat\Sigma\coloneq \widehat\Sigma_{X,Y}(p)$ and $\Sigma\coloneq \Sigma_{X,Y}(p)$.
Then the decomposition
\begin{equation}\label{eq:decomp_main}
\begin{aligned}
\sqrt{n}\big(\widehat\Sigma-\Sigma\big)
&=
\sqrt{n}\Big(T_n(\mu,\nu)-T(\mu,\nu)\Big)
\\
&\quad+
\sqrt{n}\Big(T_n(\hat\mu,\hat\nu)-T_n(\mu,\nu)\Big)
\end{aligned}
\end{equation}
separates the empirical fluctuation term from the plug-in term.

For the empirical term, let $Z_i\coloneq U_iV_i^\top$ with $\mbE Z_i=\Sigma_{X,Y}(p)$.
Under (A5), $\mbE\|Z_i\|_F^2<\infty$ and the multivariate CLT gives
\begin{equation}\label{eq:clt_empirical}
\begin{aligned}
\sqrt{n}\Big(\mathrm{vec}(T_n(\mu,\nu))-\mathrm{vec}(T(\mu,\nu))\Big)
&=
\frac1{\sqrt{n}}\sum_{i=1}^n \tilde Z_i
\\
&\overset{d}{\to} \Normal(0,\Xi_0),
\end{aligned}
\end{equation}
where $\tilde Z_i\coloneq \mathrm{vec}(Z_i)-\mbE\mathrm{vec}(Z_i)$ and $\Xi_0\coloneq \Var(\mathrm{vec}(Z_1))$.

For the plug-in term, let $\delta_\mu\coloneq \log_\mu(\hat\mu)\in T_\mu\mcM$ and $\delta_\nu\coloneq \log_\nu(\hat\nu)\in T_\nu\mcM$.
Under (A6), $(m,x)\mapsto \phi_m(x)$ and $(n,y)\mapsto \psi_n(y)$ admit first-order Taylor expansions, uniformly over $i$ on $\U$:
\begin{align}
\phi_{\hat\mu}(X_i)
&=
\phi_\mu(X_i) + D_\mu\phi_\mu(X_i)[\delta_\mu] + r_{i,\mu},
\label{eq:taylor_phi}\\
\psi_{\hat\nu}(Y_i)
&=
\psi_\nu(Y_i) + D_\nu\psi_\nu(Y_i)[\delta_\nu] + r_{i,\nu},
\label{eq:taylor_psi}
\end{align}
with $\max_i\|r_{i,\mu}\|=O_p(\|\delta_\mu\|^2)$ and $\max_i\|r_{i,\nu}\|=O_p(\|\delta_\nu\|^2)$.
Multiplying and keeping first-order terms yields
\begin{align}
\hat U_i\hat V_i^\top
&=
U_iV_i^\top
+ \Delta_{\mu,i} V_i^\top
+ U_i \Delta_{\nu,i}^\top
+ R_i,\label{eq:prod_expand}
\end{align}
where $\Delta_{\mu,i}\coloneq D_\mu\phi_\mu(X_i)[\delta_\mu]$ and $\Delta_{\nu,i}\coloneq D_\nu\psi_\nu(Y_i)[\delta_\nu]$.
The remainder satisfies $\frac1n\sum_{i=1}^n\|R_i\|_F=O_p(\|\delta_\mu\|^2+\|\delta_\nu\|^2+\|\delta_\mu\|\,\|\delta_\nu\|)$, hence by Lemma~\ref{lem:mean_clt},
\begin{equation}\label{eq:remainder_small}
\sqrt{n}\cdot \Big\|\frac1n\sum_{i=1}^n R_i\Big\|_F = o_p(1).
\end{equation}

Using $\mathrm{vec}(ab^\top)=b\otimes a$ gives the linearization
\begin{equation}\label{eq:plugin_linear}
\sqrt{n}\,\mathrm{vec}\big(\Delta T_n\big)
=
B_{\mu,n}\,\sqrt{n}\,\delta_\mu
+
B_{\nu,n}\,\sqrt{n}\,\delta_\nu
+ o_p(1),
\end{equation}
where $\Delta T_n\coloneq T_n(\hat\mu,\hat\nu)-T_n(\mu,\nu)$ and
\[
\begin{aligned}
B_{\mu,n}&\coloneq \frac1n\sum_{i=1}^n \Big(D_\mu\phi_\mu(X_i)\Big)\otimes V_i,\\
B_{\nu,n}&\coloneq \frac1n\sum_{i=1}^n U_i \otimes \Big(D_\nu\psi_\nu(Y_i)\Big).
\end{aligned}
\]
By the law of large numbers and (A5)--(A6), $B_{\mu,n}\to_p A_\mu$ and $B_{\nu,n}\to_p A_\nu$, where
\begin{align}
A_\mu &\coloneq  \mbE\big[ V\otimes D_\mu\phi_\mu(X)\big],\label{eq:defA_mu}\\
A_\nu &\coloneq  \mbE\big[ U\otimes D_\nu\psi_\nu(Y)\big].\label{eq:defA_nu}
\end{align}
Thus
\begin{equation}\label{eq:plugin_limit}
\sqrt{n}\,\mathrm{vec}\big(\Delta T_n\big)
=
A_\mu\,\sqrt{n}\,\delta_\mu
+
A_\nu\,\sqrt{n}\,\delta_\nu
+ o_p(1).
\end{equation}
Substituting the $\sqrt{n}$-expansions of $\delta_\mu,\delta_\nu$ from Lemma~\ref{lem:mean_clt} yields
\begin{equation}\label{eq:plugin_influence}
\begin{aligned}
\sqrt{n}\,\mathrm{vec}\big(\Delta T_n\big)
&=
A_\mu H_X^{-1}\left(\frac1{\sqrt{n}}\sum_{i=1}^n \xi_i\right)\\
&\quad+
A_\nu H_Y^{-1}\left(\frac1{\sqrt{n}}\sum_{i=1}^n \zeta_i\right)
+ o_p(1).
\end{aligned}
\end{equation}

Finally, combining \eqref{eq:decomp_main}, the vectorized empirical term, and \eqref{eq:plugin_influence} gives the asymptotic linear representation \eqref{eq:final_if} with influence term \eqref{eq:IF}.
\end{proof}
\begin{proof}[Proof of Theorem~\ref{thm:Sigma_clt}]
By Lemma~\ref{lem:Sigma_if}, we have
\[
\begin{aligned}
\sqrt{n}\,\Delta_n
&=
\frac1{\sqrt{n}}\sum_{i=1}^n \IF_i + o_p(1).
\end{aligned}
\]
The summands $\IF_i$ are i.i.d.\ mean-zero. Under (A5)--(A6), they have finite second moments, hence by the multivariate CLT
$\frac1{\sqrt{n}}\sum_{i=1}^n \IF_i \Rightarrow \Normal(0,\Var(\IF_1))$.
Slutsky's theorem yields the stated limit with $\Xi=\Var(\IF_1)$.
\end{proof}
\section{Simulation Results}
The sphere experiments were conducted on a Macbook Air with an Apple M1 chip (8 cores: 4 performance and 4 efficiency), with a total wall-clock time of approximately 1 hours. The SPD experiments were run on a MacBook Pro with an Apple M4 Pro chip (14 cores: 10 performance and 4 efficiency) and 24 GB of unified memory, running macOS 26.3, with a total wall-clock time of approximately 3 hours. While the runtime for the real case study is negligible, the bootstrap method for Schizophrenia challenge dataset takes about 2 hours.

We utilze code from ManOpt \citep{JMLR:v17:16-177} (\url{http://jmlr.org/papers/v17/16-177.html}) and Geomstats \citep{miolane2020geomstats} (\url{https://www.jmlr.org/papers/v21/19-027.html}).
\subsection{Sphere}\label{sphere:simu}

\subsubsection{Data Generation and Experimental Setup}

We start by generating a random data set $\{X_i\}_{i=1}^n$ that is truncated uniformly distributed on $\mbS^2$ which provide a simple way to ensure the data are bounded. Fixing the center at the north pole, points with coordinate $(x,y,z)\in\mbR^3$ are generated as
\begin{align*}
    x &=\sqrt{1-z^2}\cos(2\pi v)\\ 
    y &=\sqrt{1-z^2}\sin(2\pi v)\\ 
    z &=1-w(1-\cos\tau)
\end{align*}

where $v,w\sim U(0,1)$ and $\tau$ controls the bound of the data. To guarantee the uniqueness of the Fr\'echet mean, we require $0<\tau<\pi/2$. The dependent dataset $\{Y_i\}_{i=1}^n$ is controlled by defining it in terms of a rotation of the first set. The rotation matrix, given by
\begin{align*}
    O(\eta) &= I + \sin(\eta)K+(1-\cos(\eta))K^2\\
    K &= \begin{bmatrix}
        0 & -u_z & u_y\\u_z & 0 & u_x\\-u_y & u_x & 0
    \end{bmatrix}
\end{align*}
is a realization of Rodrigues' rotation formula \citep{cheng1989historical}. Here $\eta$ denotes the angle of counterclockwise rotation about an axis $(u_x,u_y,u_z)$. Noise is then added to represent the random perturbation, after which the samples are parallel transported to the tangent space at another point $q$.

Therefore, the second sample set is generated as
\begin{align*}
    Y_i=\exp_q\Bigg[\Gamma_{\tilde\nu}^q\left(\frac{O(\eta)\log_{\hat\mu} X_i+\epsilon_i}{\|O(\eta)\log_{\hat\mu} X_i+\epsilon_i\|}\right)\Bigg],
\end{align*}
where $\epsilon_i\sim N_3(0,\sigma_\epsilon^2I_3)$, $\hat\mu$ is the Fr\'echet mean of the set $\{X_i\}_{i=1}^n$ and $\tilde\nu$ is the Fr\'echet mean of the set $\{Y_i\}_{i=1}^n$ before the transportation. We can expect $q=\hat\nu$ to be the Fr\'echet mean of transported data, as there's no additional noise introduced. 

\begin{figure}
    \centering
    \includegraphics[width=0.7\linewidth]{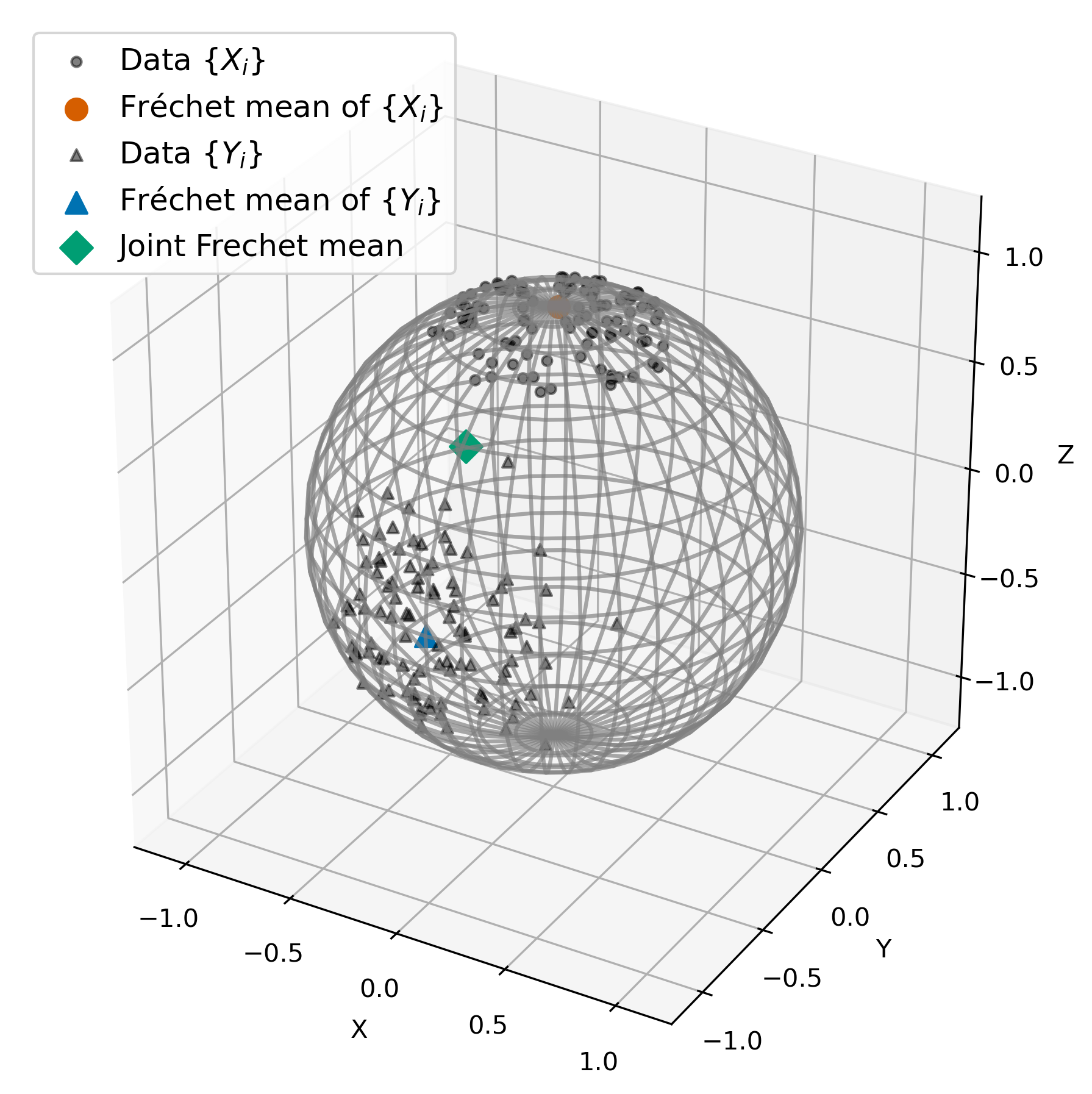}
    \caption{An illustration of a dataset $(n=100)$ on $\mbS^2$ and its dependent dataset, together with the respective Fr\'echet mean and the joint Fr\'echet mean (midpoint of two Fr\'echet mean).}
    \label{fig:Uni_Sphere}
\end{figure}

\subsubsection{Simulation on Transport effects}

\begin{figure}[h]
    \centering
    \includegraphics[width=0.8\linewidth]{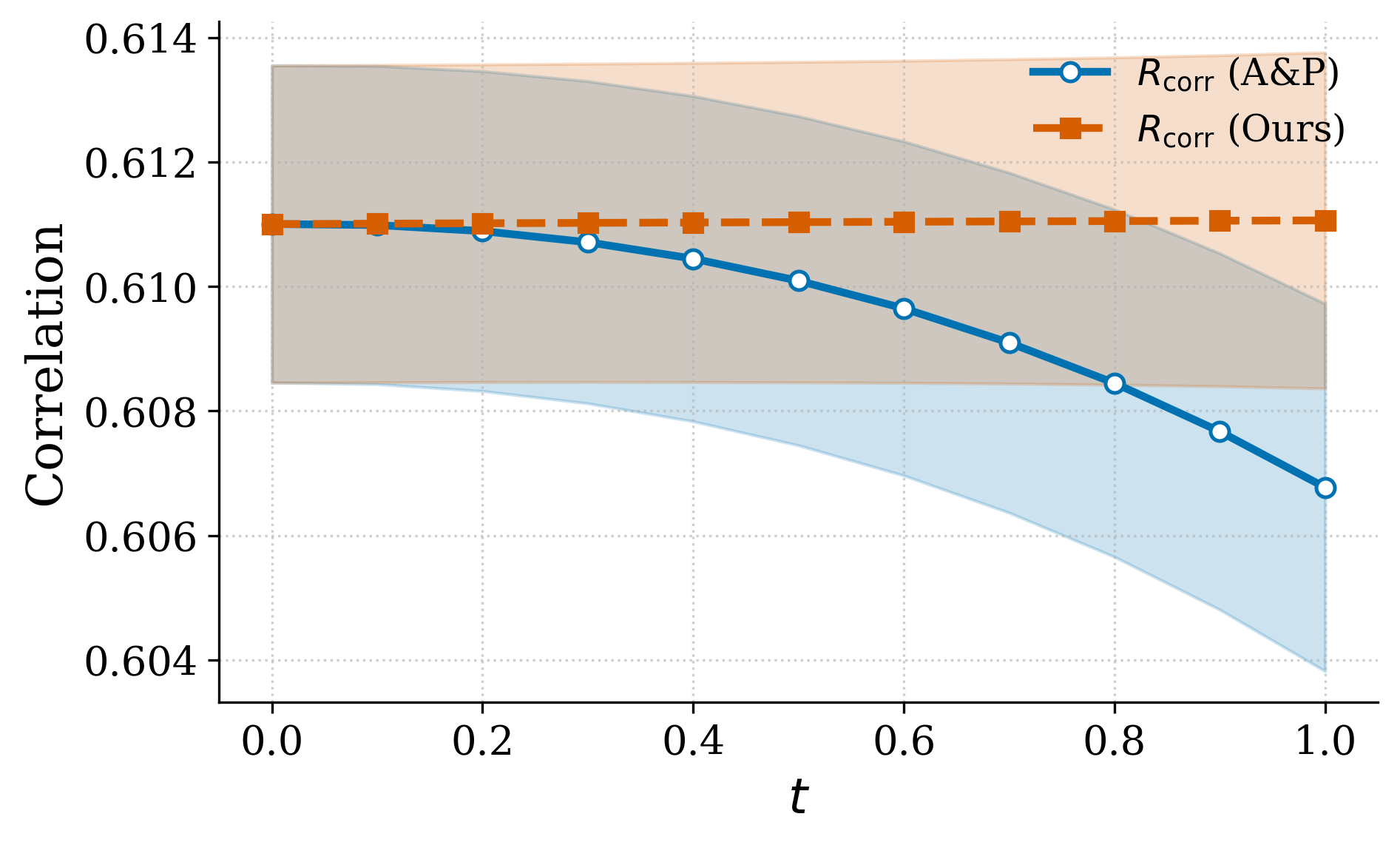}
    \caption{Comparison of Riemannian correlation (Ours), $A\&P$ correlation, and the CCA estimator. The set $\{Y_i\}$ is transported along the geodesic $\alpha$ with $\alpha(0)=\tilde\nu$ and $\alpha(1)=(0,-1,0)$. The x-axis denotes the position of Fr\'echet mean of $\{Y_i\}$ along $\alpha$. Parameters: $\tau=\pi/6$, $\eta=\pi/4$, $\sigma_\epsilon=0.15$, MC $=200$.}
    \label{fig:geodesic_sim}
\end{figure}

For Figure \ref{fig:geodesic_sim} evaluate the correlation estimates along a geodesic. The correlation is computed by transporting data $\{Y_i\}$ to different point along a geodesic connecting $\tilde\nu$ and $(0,-1,0)$. The shaded area represents the confidence interval of 2 standard errors. It can be seen that 
our estimates closely matches the $A\&P$ correlation when two data sets are close, but diverge as they are moved far apart. This indicates that the distance between datasets could affect the $A\&P$ estimate when it is evaluated on the tangent space of joint Fr\'echet mean.

\begin{figure}[h]
    \centering
    \includegraphics[width=0.8\linewidth]{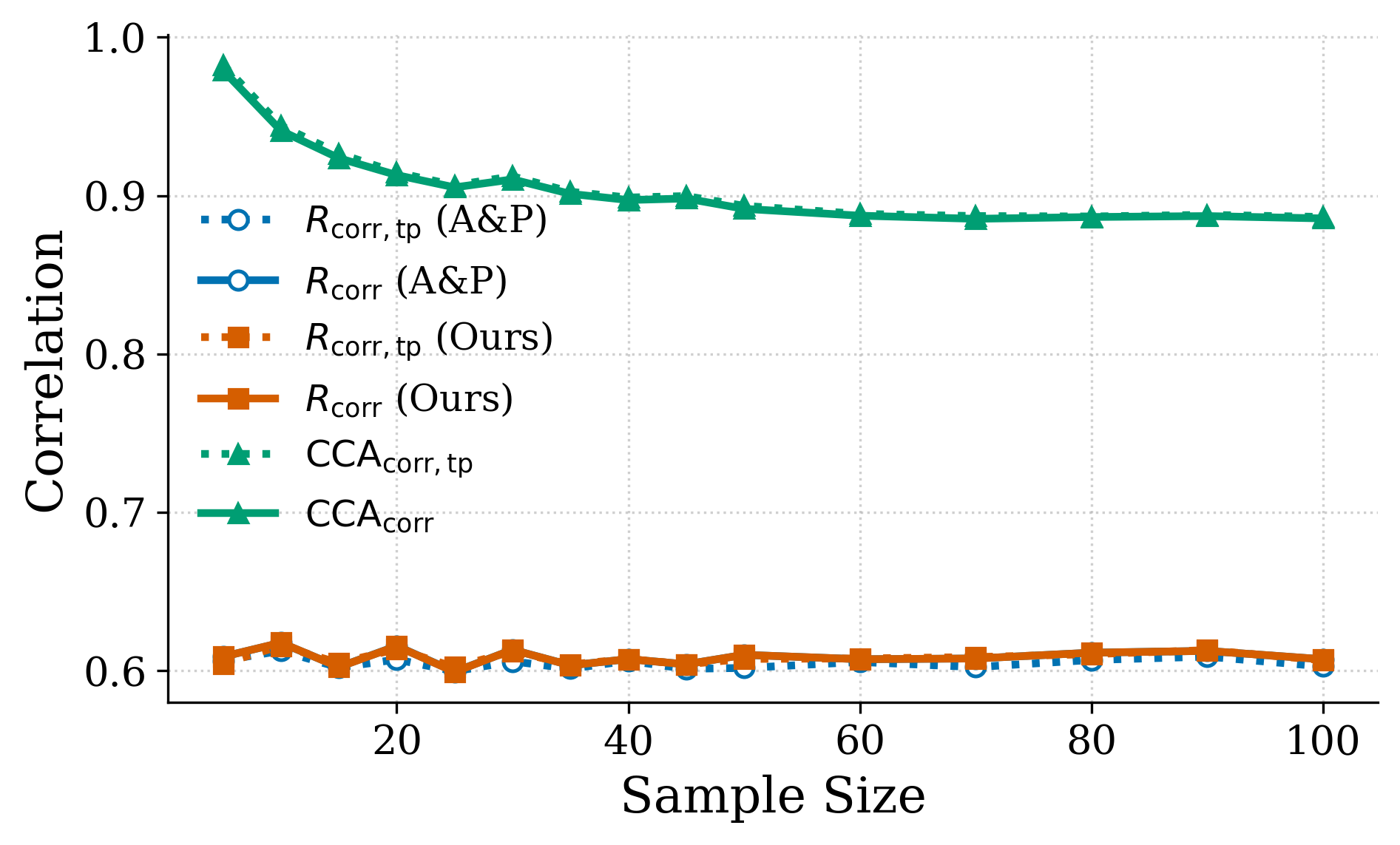}
    \caption{Comparison of Riemannian correlation, $A\&P$ correlation and CCA estimator. Solid line correspond to samples sharing the same Fr\'echet mean while dotted line represents results from data $\{Y_i\}$ transported to $(0,-1,0)$. Parameters: $\tau=\pi/6$, $\eta=\pi/4$, with noise level $\sigma_\epsilon=0.15$.}
    \label{fig:add_convergence}
\end{figure}

In Figure \ref{fig:add_convergence}, we compare our proposed Riemannian correlation with $A\&P$ correlation and CCA estimator. Solid lines correspond to the original set $\{Y_i\}_{i=1}^n$ without transport, while dotted lines shows the results after transporting $\{Y_i\}_{i=1}^n$ to $(0,-1,0)$. The CCA estimator gives the same estimation before and after transportation, but it consistently overestimates the correlation, as it removes the effect of the rotation matrix.

\subsubsection{Sensitivity to Rotation and Independence}

\begin{figure}
    \centering
    \begin{tabular}{@{}c@{} @{}c@{}}
            \includegraphics[width=0.49\linewidth]{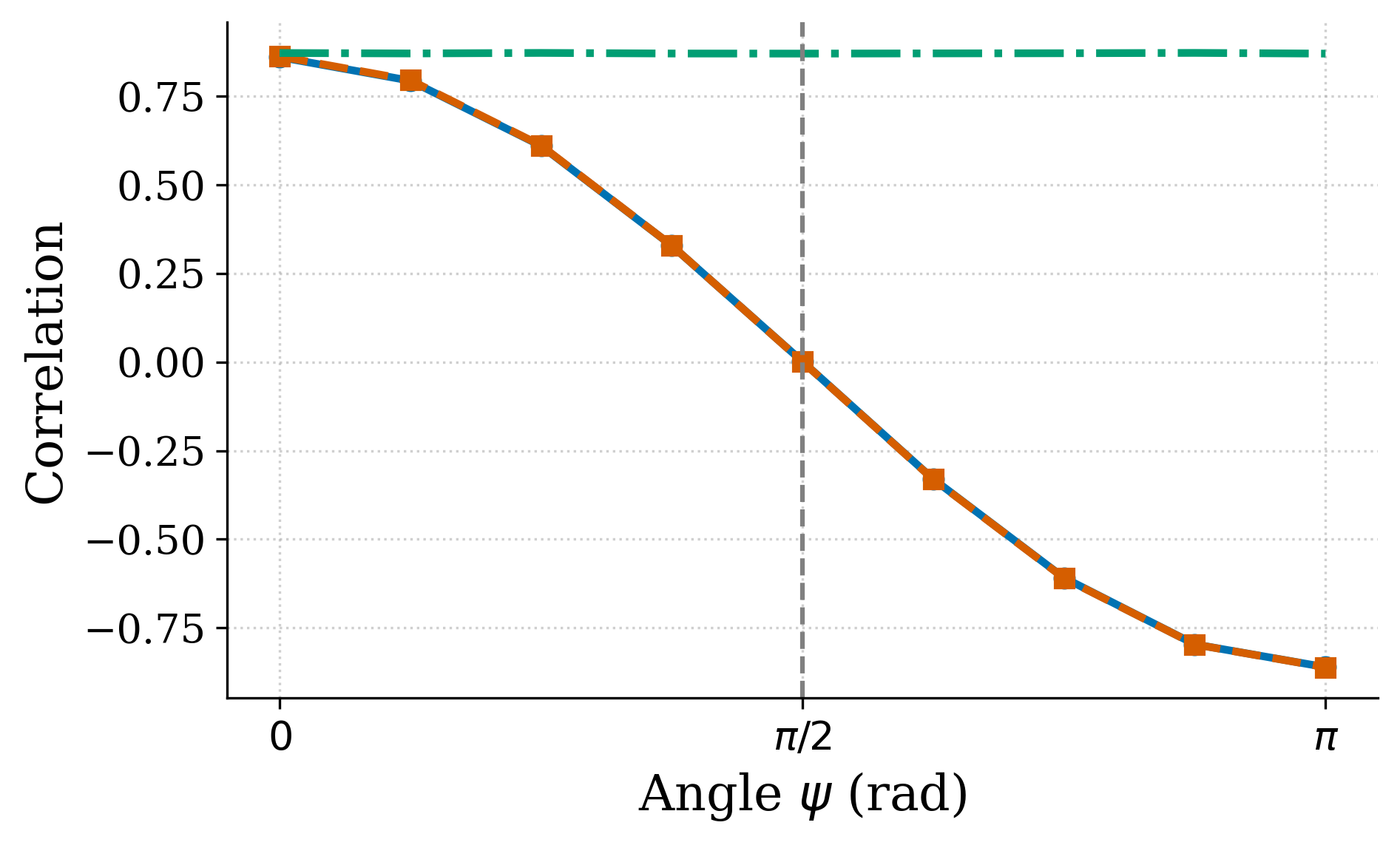}&
            \includegraphics[width=0.49\linewidth]{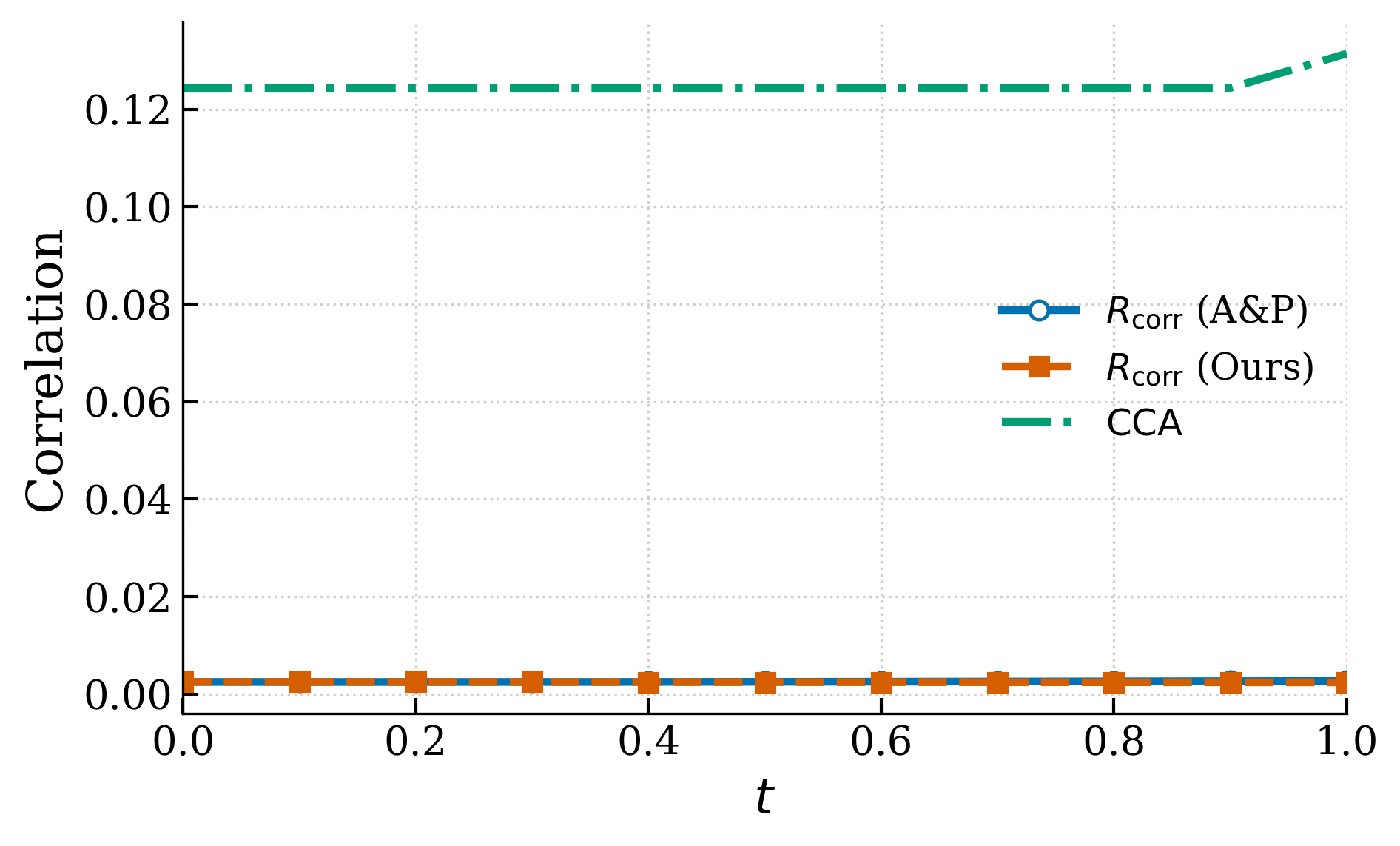}
    \end{tabular}
    \caption{Comparison of Riemannian correlation (Ours), $A\&P$ correlation, and the CCA estimator. Parameter is set as $\tau=\pi/6$. \textbf{Left:} The set $\{Y_i\}$ is transported to $\alpha(1/2)$, with $\alpha(0)=\tilde\nu$ and $\alpha(1)=(0,-1,0)$. The x-axis shows rotation angle with the vertical dashed line indicating $\pi/2$. The noise level is $\sigma_\epsilon=0.15$. \textbf{Right:} Two independent sets with $\{Y_i\}$ transported along the geodesic connecting its Fr\'echet mean and (0,-1,0). The x-axis denoted the position of Fr\'echet mean of $\{Y_i\}$ along $\alpha$.}
    \label{fig:angle_sim}
\end{figure}

In Figure \ref{fig:angle_sim}, we explore the effect of different rotation matrices on the samples, as shown in the left hand plot. The samples $\{Y_i\}_{i=1}^n$ are generated by rotating $\{X_i\}_{i=1}^n$ by various angles and transported to the midpoint of geodesic connecting $\tilde\nu$ and $(0,-1,0)$. When no noise is added, we expect to have $\mcR_{X,Y}(\hat\nu)=1$ for $\eta=0$ and $\mcR_{X,Y}(\hat\nu)=0$ for $\eta=\frac{\pi}{2}$, as two datasets form orthogonal vectors in a tangent space. Similar as before, CCA consistently ignores the rotation and overestimates the correlation. Our method has estimates similar to those of the $A\&P$ approach, and It can be checked that estimates are close to 0 at $\frac{\pi}{2}$.

Finally, we test the estimators on two independent datasets. The datasets are generated using a pseudo-random number generator (PRNG). Though they are not mathematically independent in a theoretical sense, the independence assumption is sufficient for simulation purpose. One dataset is transported along the geodesic, with result shown on the right of Figure \ref{fig:angle_sim}. As can be seen, our method always have estimates around 0, indicating the reliability of the method. The $A\&P$ method also has the similar pattern. Since CCA is defined to maximize an optimization problem, it produces slight higher correlation estimates.
\subsubsection{Ground-truth Recovery Experiment}

We here conduct a recovery simulation on $\mbS^2$. In the tangent space at the north pole $(0,0,1)$, we generate two sets of normally distributed samples with a known covariance matrix. The samples take the form $X=(x_1,x_2,0)$ and $Y=(y_1,y_2,0)$, where $(x_1,x_2,y_1,y_2)\sim\mcN_4(0, \Sigma)$ with
\begin{align*}
    \Sigma=\begin{pmatrix}
    0.1I_2 & 0.04J_2\\
    0.04J_2 & 0.08I_2
    \end{pmatrix}, \quad J_2=\begin{pmatrix}
        1 & 1\\ 1 & 1
    \end{pmatrix}.
\end{align*}

The cross-covariance matrix of the two sets is the $3\times 3$ matrix whose top-left block is $0.04J_2$ and all other entries are 0. The correlation, by construction, is $0.08/\sqrt{0.2\cdot 0.16}\approx 0.447$. We generate 1000 pairs of data and repeat the experiment 100 times. Table~\ref{tab:recovery_corr} gives a numerical comparison of the methods addressed in the paper. The geodesic $\gamma$ is defined such that $\gamma(0)$ is the joint Fr\'echet mean of $\{\exp_{(0,0,1)}X_i\}$ and $\{\exp_{(0,0,1)}Y_i\}$, and $\gamma(1)=(0,1,0)$.

\begin{table}[h]
    \centering
    \caption{Covariance and correlation estimates for recovery experiment}
    \small
    \begin{tabular}{cccc}
    \toprule
         & $\gamma(0)$ & $\gamma(0.5)$ & $\gamma(1)$ \\
    \midrule
       $R_{cov}(Ours)$  & 0.079 & 0.079 & 0.079 \\
       $R_{cov}(A\&P)$ & 0.079 & 0.087 & 0.081 \\
       $R_{corr}(Ours)$ & 0.446 & 0.446 & 0.446 \\
       $R_{corr}(A\&P)$ & 0.446 & 0.438 & 0.445 \\
       $CCA$ & 0.894 & 0.894 & 0.894\\
    \bottomrule
    \end{tabular}
    
    \label{tab:recovery_corr}
\end{table}

Specifically, the covariance matrix given by our method (at Fr\'echet mean of X set) is \begin{align*}
    \hat\Sigma=\begin{pmatrix}
        0.0395 & 0.0396 & -5.96\times 10^{-6}\\
        0.0398 & 0.0399 & -7.07\times 10^{-6}\\
        4.715\times 10^{-8} & -2.692\times 10^{-6} & 9.971\times 10^{-6}
    \end{pmatrix},
\end{align*}
which is very close to the ground truth covariance matrix.
\subsection{SPD Manifold}\label{app:spd_sim}

\subsubsection{Data Generation and Experimental Setup}
To rigorously assess the proposed estimator, we employ a generative model that decouples intrinsic geometric correlation from ambient curvature effects. The manifold of $d \times d$ Symmetric Positive Definite (SPD) matrices, $\mathcal{P}_d$, is endowed with the Affine Invariant Riemannian Metric (AIRM). We fix the dimensionality at $d=3$ and the sample size at $n=100$ for most experiments unless otherwise specified.

The primary dataset $\{X_i\}_{i=1}^n$ is generated by perturbing the identity matrix $I_d$. Tangent vectors $v_{X,i} \in T_{I_d}\mathcal{P}_d$ are sampled from a symmetric normal distribution with spread parameter $\tau=0.5$ and subsequently mapped to the manifold via the Riemannian exponential map, denoted as $X_i = \text{Exp}_{I_d}(v_{X,i})$.

The dependent dataset $\{Y_i\}_{i=1}^n$ is constructed via a two-stage process to ensure that the correlation structure is intrinsic and transport-invariant. First, we generate tangent vectors $v_{Y,i} \in T_{I_d}\mathcal{P}_d$ that are locally correlated with $v_{X,i}$ according to the relation $v_{Y,i} = \rho \cdot v_{X,i} + \sqrt{1-\rho^2} \cdot \epsilon_i$. Here, $\rho \in [0,1]$ controls the correlation strength, and $\epsilon_i$ is a random symmetric matrix representing noise, normalized such that $\|\epsilon_i\|_{I_d} = \|v_{X,i}\|_{I_d}$. This normalization ensures that the dispersion of $Y$ matches that of $X$. Subsequently, the entire vector cloud $\{v_{Y,i}\}$ is parallel transported from $I_d$ to a target mean $\nu$ along the geodesic connecting them. Let $\Gamma_{I_d}^\nu$ denote the parallel transport operator; the final samples are given by $Y_i = \text{Exp}_{\nu}\left( \Gamma_{I_d}^\nu( v_{Y,i} ) \right)$. By generating correlation locally and then transporting the distribution, we ensure that any observed decorrelation in the estimators is attributable to the estimation method itself rather than artifacts of the data generation process in curved space.
\begin{figure}[h]
    \centering
    \begin{tabular}{@{}c@{} @{}c@{}}
        \includegraphics[width=0.49\linewidth]{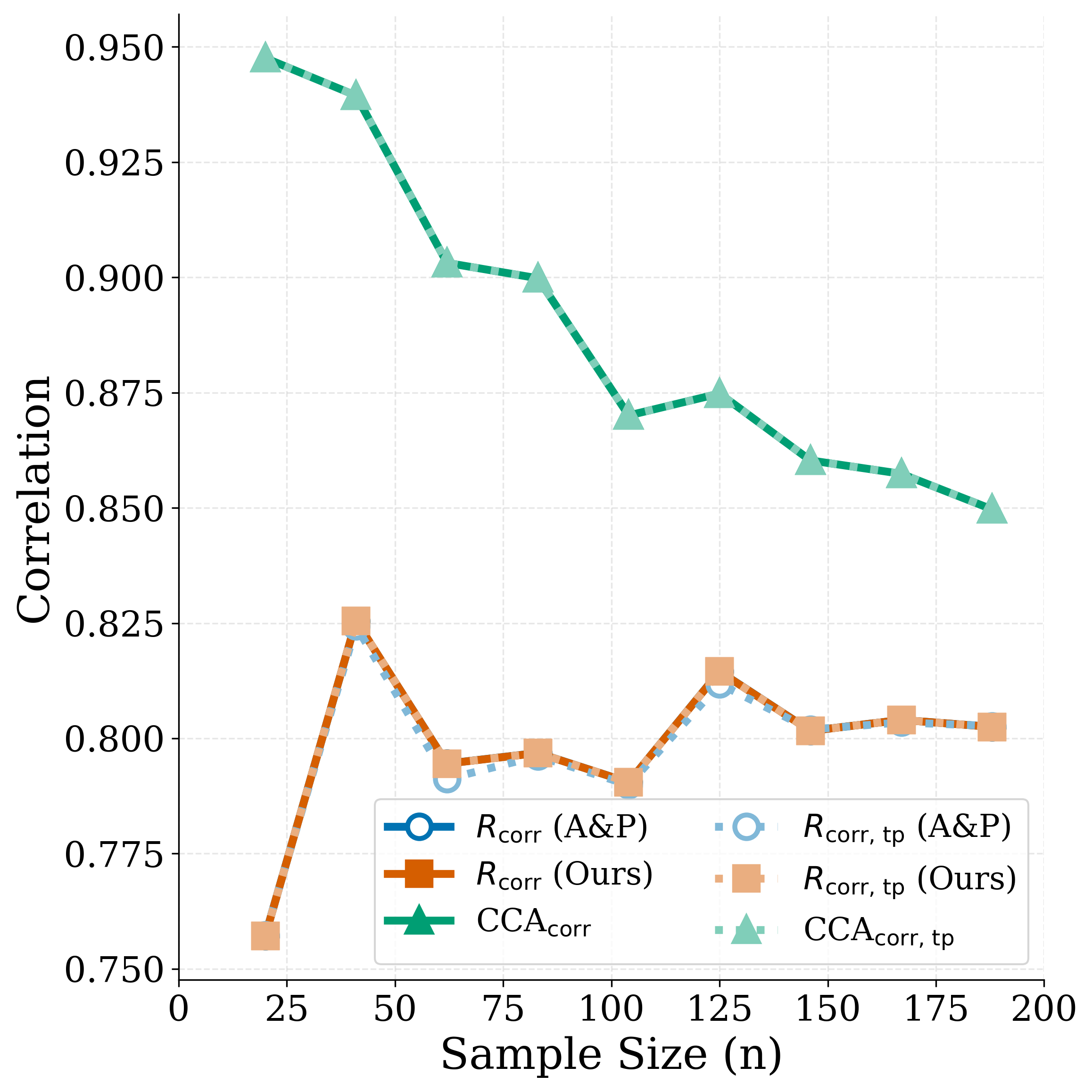}&
        \includegraphics[width=0.49\linewidth]{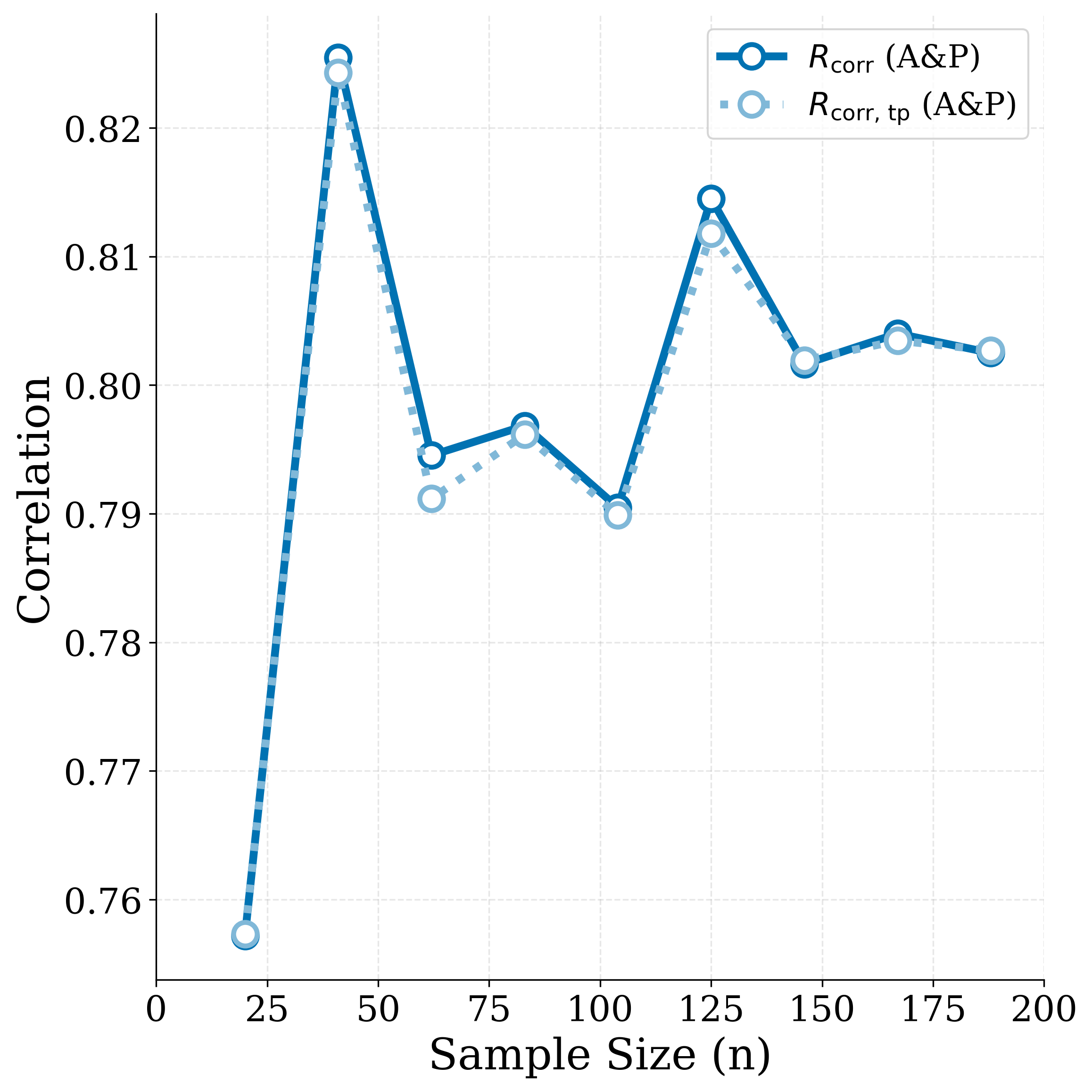}
    \end{tabular}
    \caption{Convergence of Correlation Estimates on SPD Manifold. \textbf{Left:} Comparison of A\&P, Ours, and CCA methods with increasing sample size. Solid lines: original dataset (non-transport); dashed lines: parallel-transported dataset. Our method produces nearly identical estimates for non-transport and transport scenarios, consistent with isometric invariance. Log-log regression suggests $\sqrt{n}$-convergence (slope $-0.49 \pm 0.03$). \textbf{Right:} Detailed A\&P vs Ours comparison. Shaded regions show 95\% confidence intervals. Parameters: $d=3, \tau=0.5, \sigma_\epsilon=0.15$, MC varies by $n$.}
    \label{fig:spd_convergence}
\end{figure}

\subsubsection{Asymptotic Consistency and Convergence}
Asymptotic consistency and isometric invariance are assessed in Figure \ref{fig:spd_convergence}. Log-log regression analysis suggests $\sqrt{n}$-convergence for both methods: the convergence rate estimates are $-0.49 \pm 0.03$ (Ours) and $-0.51 \pm 0.04$ (A\&P), which are consistent with the theoretical rate of $-0.5$ (95\% confidence intervals include $-0.5$). The proposed method produces nearly identical estimates for the original and parallel-transported datasets across all sample sizes, which is consistent with the estimator capturing the intrinsic geometry invariant under isometry. This is a prerequisite for robust statistical inference on manifolds independent of the ambient coordinate system. The right panel provides a detailed comparison between A\&P and Ours, highlighting the systematic differences in convergence behavior.

\subsubsection{Geodesic Simulation and Transport Effects}
Figure \ref{fig:spd_geodesic_sim} examines the estimator's behavior under geometric transformations through two complementary experiments that test distinct theoretical properties. The left panel tests isometric invariance by physically transporting the dataset $Y$ along a geodesic away from $X$ and evaluating correlation at a fixed reference point. As expected from the definition of parallel transport (an isometry preserving inner products), the intrinsic correlation should remain invariant, and our estimator exhibits low variation (standard error $SE \approx 0.008$). The right panel tests footpoint invariance (Theorem \ref{thm:ftinvar}) by fixing the data and evaluating correlation at different analytical points $p$ along the geodesic connecting the sample means. Our estimator produces nearly constant estimates ($SE \approx 0.008$) across all evaluation points, while the A\&P estimator exhibits substantial variation ($SE \approx 0.15$), depending on where it is computed. This dependence on an arbitrary reference tangent space is undesirable for a summary statistic, whereas our method renders results independent of this choice.

\begin{figure}[h]
    \centering
    \begin{tabular}{@{}c@{} @{}c@{}}
        \includegraphics[width=0.49\linewidth]{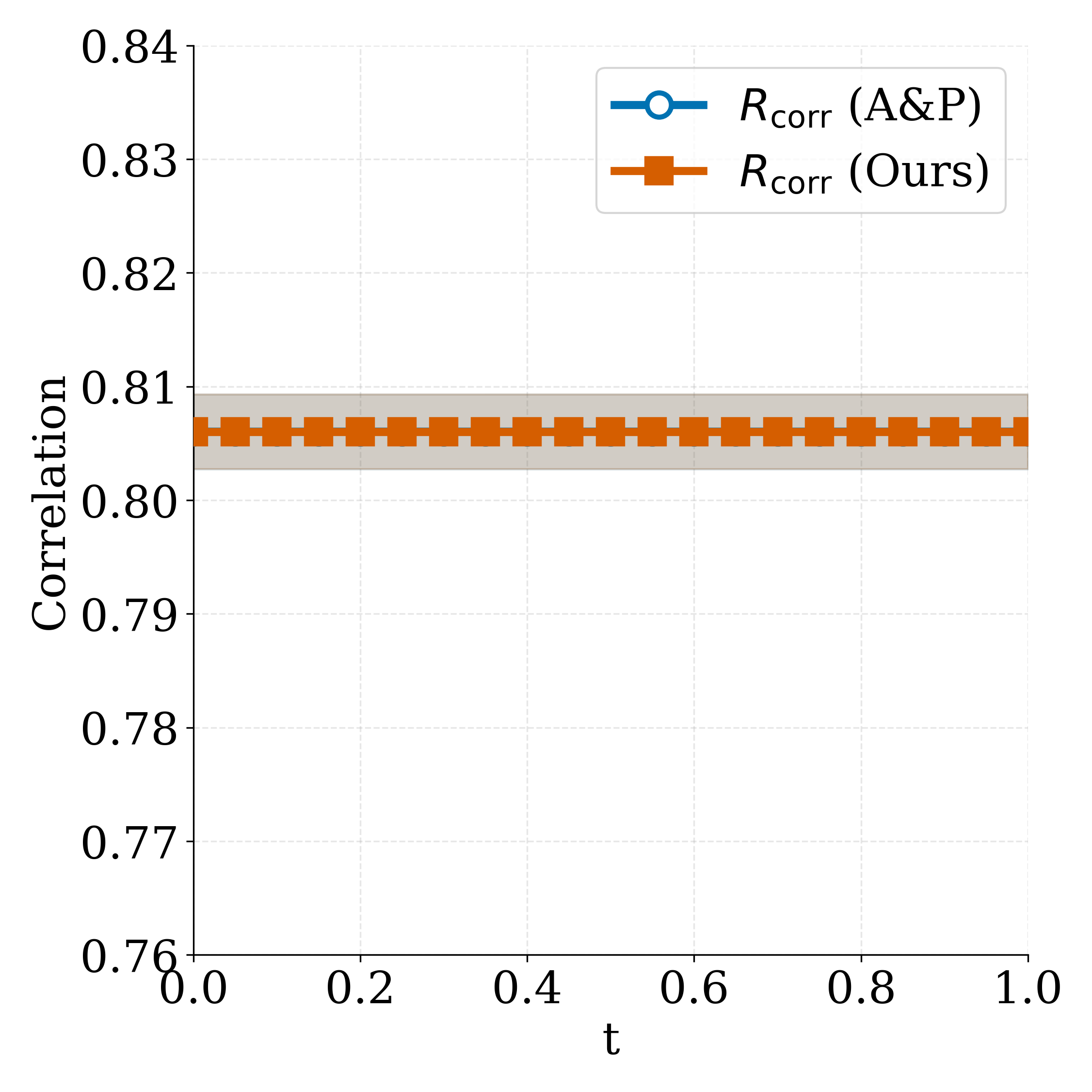}&
        \includegraphics[width=0.49\linewidth]{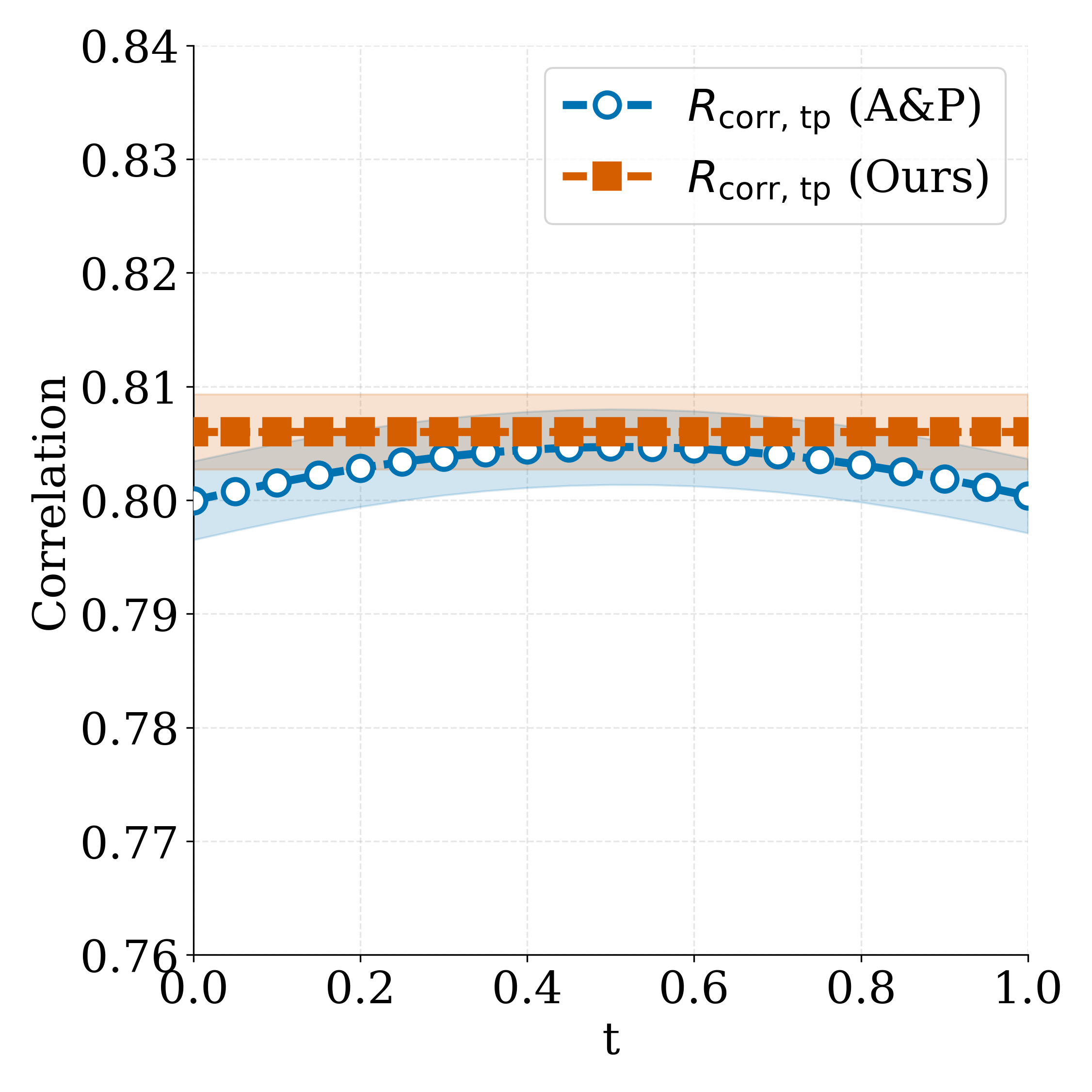}
    \end{tabular}
    \caption{Geodesic Evaluation on SPD Manifold. \textbf{Left:} Original dataset (non-transport). \textbf{Right:} Parallel-transported dataset (transport). Our method produces nearly constant estimates ($SE \approx 0.008$) across all evaluation points, consistent with footpoint invariance (Theorem \ref{thm:ftinvar}). The A\&P method exhibits substantial variation ($SE \approx 0.15$) with an inverted U-shaped pattern. Shaded regions show 95\% confidence intervals (mean $\pm$ 2 SE). Parameters: $n=100, d=3, \tau=0.5, \sigma_\epsilon=0.15$, MC $=200$.}
    \label{fig:spd_geodesic_eval}
\end{figure}

\begin{figure}[h]
    \centering
    \begin{tabular}{@{}c@{} @{}c@{} @{}c@{}}
        \includegraphics[width=0.3\linewidth]{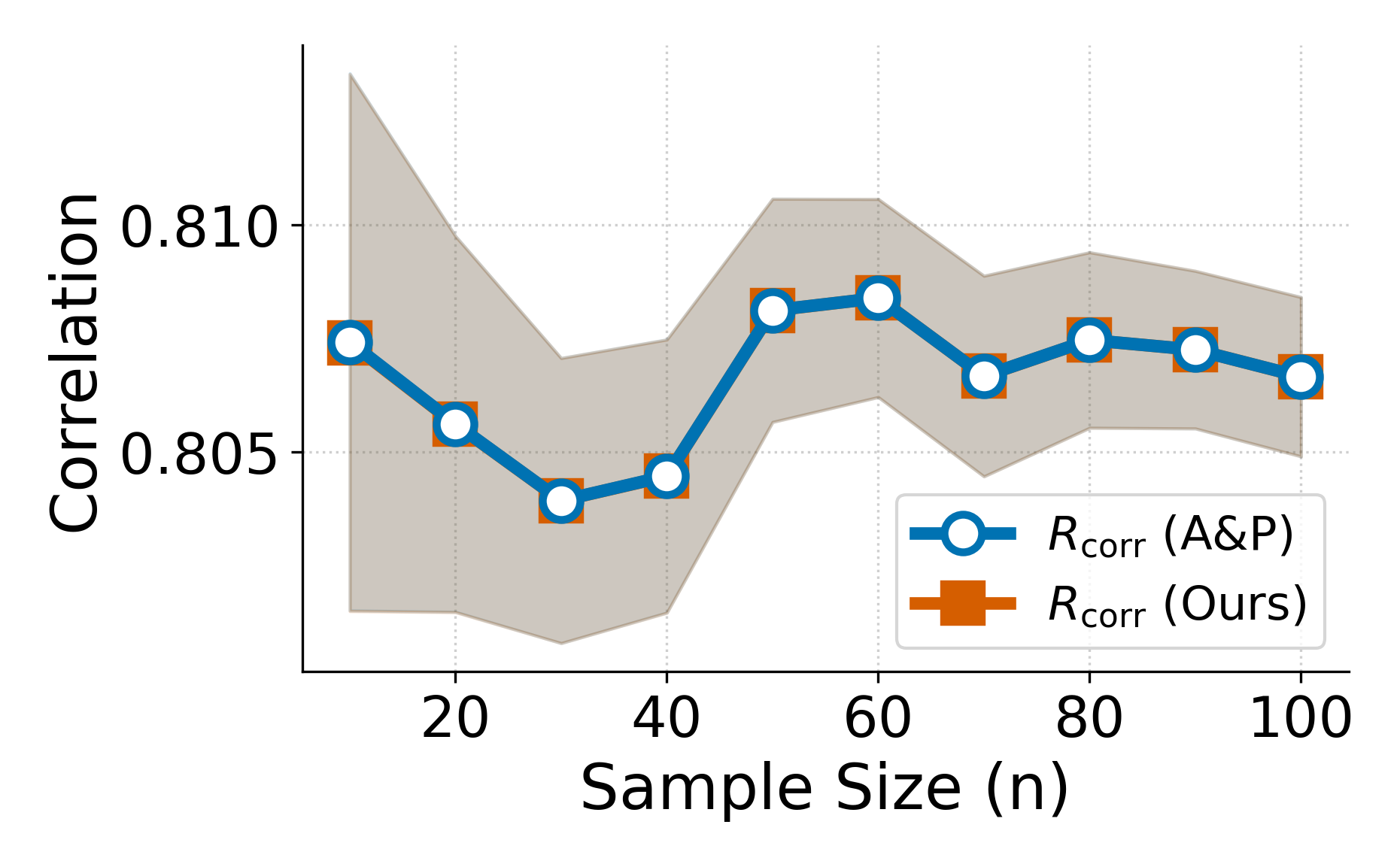}&
        \includegraphics[width=0.3\linewidth]{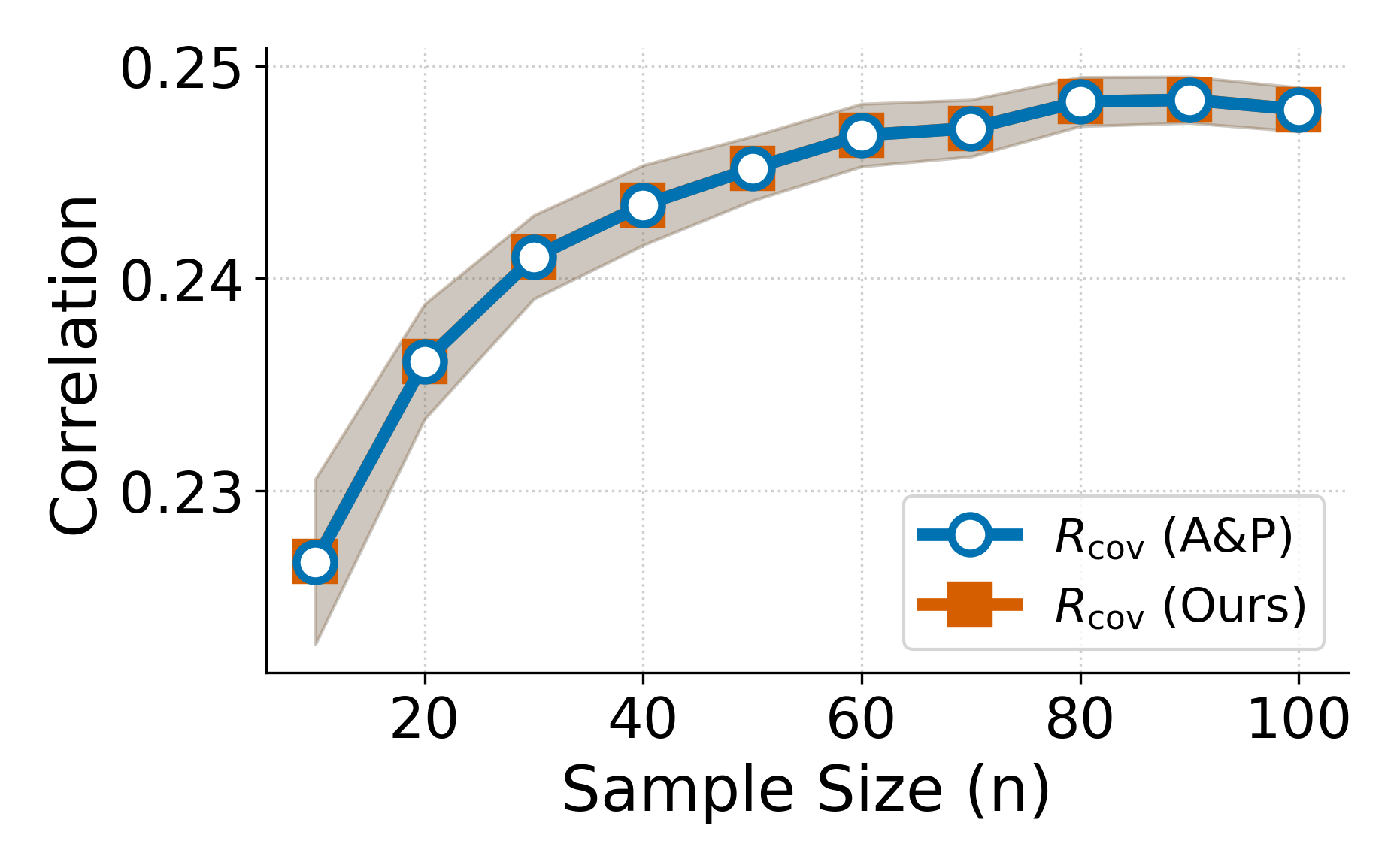}&
        \includegraphics[width=0.3\linewidth]{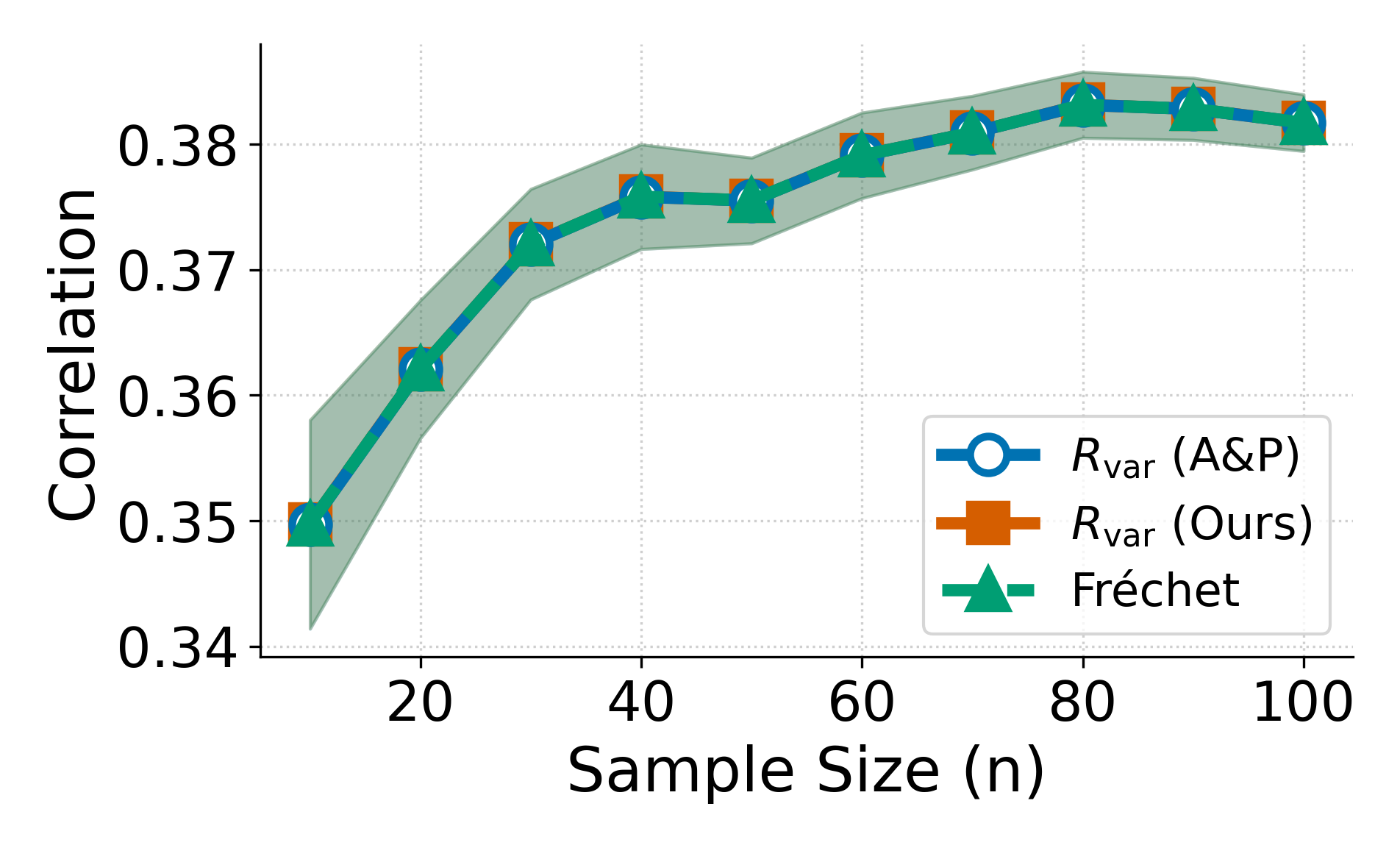}\\
        \includegraphics[width=0.3\linewidth]{Image_Neurips/SPDM/Correlation_vs_Sample_Size_Transport.png}&
        \includegraphics[width=0.3\linewidth]{Image_Neurips/SPDM/Covariance_vs_Sample_Size_Transport.png}&
        \includegraphics[width=0.3\linewidth]{Image_Neurips/SPDM/Variance_vs_Sample_Size_Transport.png}
    \end{tabular}
    \caption{Comprehensive Comparison on SPD Manifold. \textbf{Top Row:} Original dataset (non-transport). \textbf{Bottom Row:} Parallel-transported dataset (transport). \textbf{Columns:} Correlation, covariance, and variance estimates versus sample size. Our method closely matches Fréchet variance, while A\&P yields inflated variance (ratio $= 1.23 \pm 0.05$ at $n=100$), associated with deflated correlation. Our method produces nearly identical estimates for non-transport/transport scenarios, consistent with isometric invariance. Shaded regions indicate 95\% confidence intervals (mean $\pm$ 2 SE). Parameters: $n \in \{10,20,\ldots,100\}, d=3, \tau=0.5, \sigma_\epsilon=0.15$, MC $=200$ per $n$.}
    \label{fig:spd_comprehensive}
\end{figure}

\begin{figure}[htbp]
    \centering
    \begin{tabular}{@{}c@{} @{}c@{}}
        \includegraphics[width=0.49\linewidth]{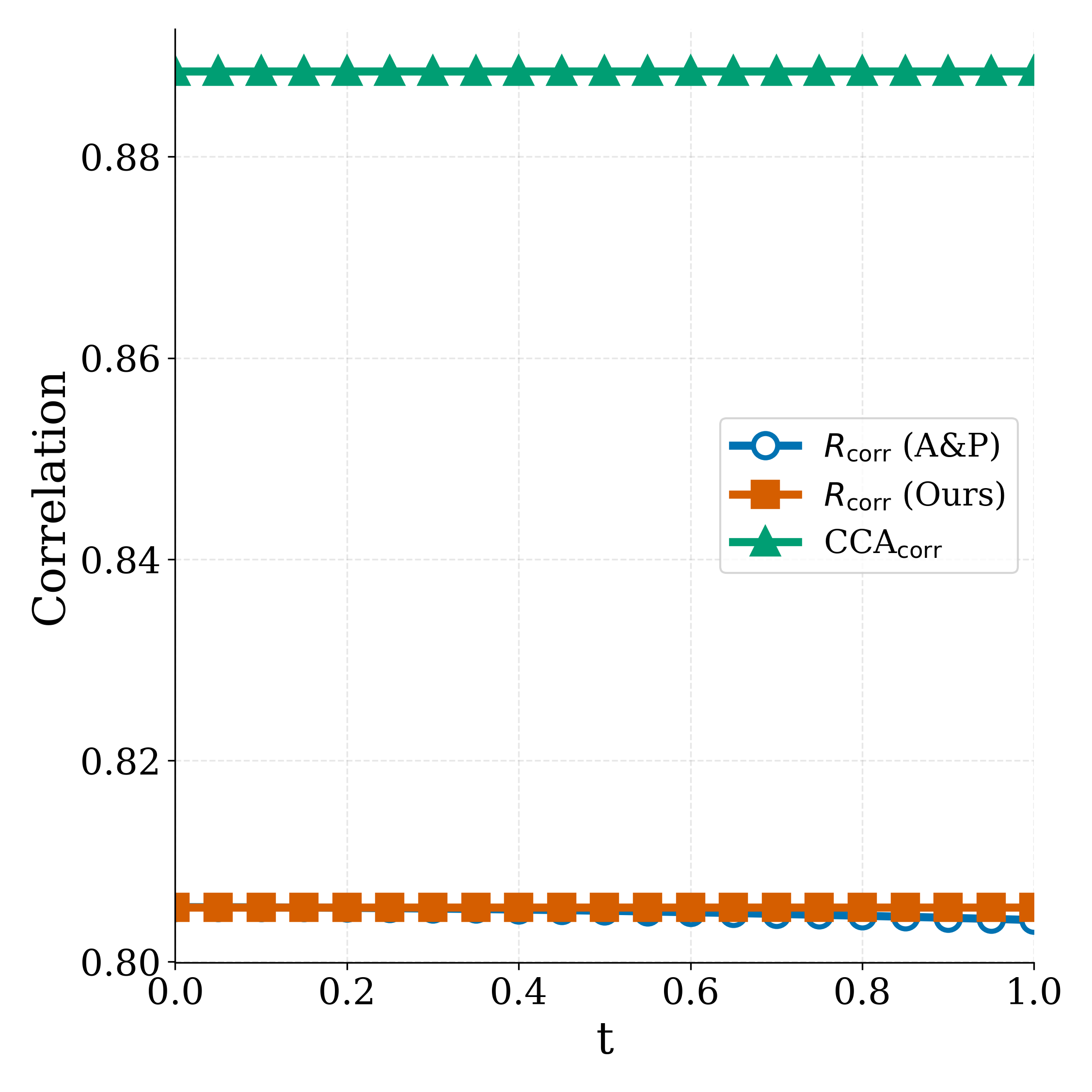}&
        \includegraphics[width=0.49\linewidth]{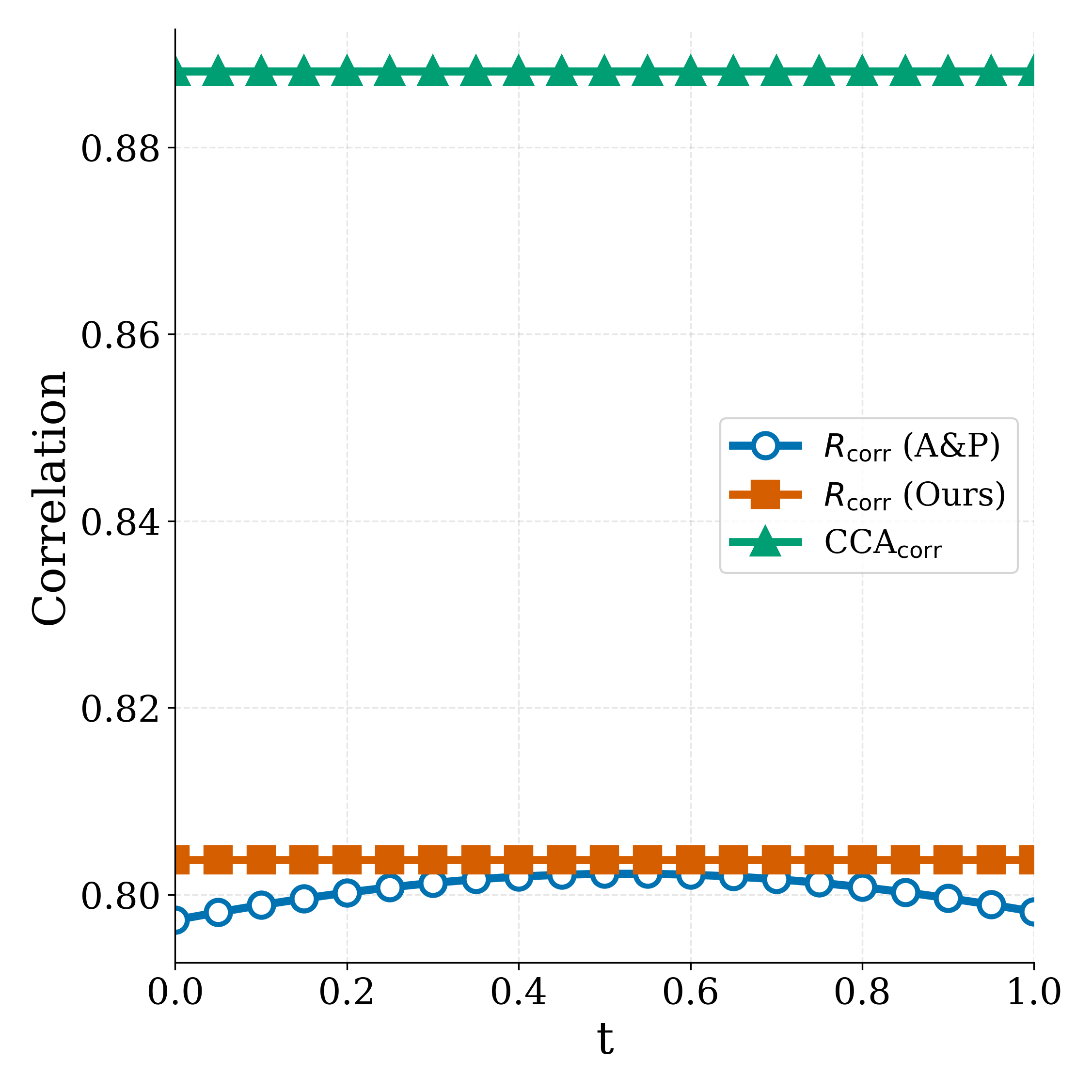}
    \end{tabular}
    \caption{Geodesic Simulation and Transport Effects on SPD Manifold. \textbf{Left:} Isometric invariance test: correlation estimates remain stable ($SE \approx 0.008$) when dataset $Y$ is physically transported along a geodesic. \textbf{Right:} Footpoint invariance test (Theorem \ref{thm:ftinvar}): correlation estimates remain nearly constant ($SE \approx 0.008$) when evaluated at different points $p$ along the geodesic. The A\&P estimator varies substantially ($SE \approx 0.15$), depending on the evaluation point. Parameters: $n=100, d=3, \tau=0.5, \sigma_\epsilon=0.15$, MC $=50$.}
    \label{fig:spd_geodesic_sim}
\end{figure}

\subsubsection{Sensitivity to Rotation and Independence}
We analyze the sensitivity of the estimator to the underlying dependence structure using tangent space rotations. Figure \ref{fig:spd_angle_effect} demonstrates that the correlation estimate decreases monotonically as the rotation angle $\eta$ approaches $\pi/2$, reflecting the geometric decorrelation. The relationship follows the expected pattern $\mathcal{R}_{X,Y}(\eta) \approx \cos(\eta)$ in the noise-free limit, with our method closely tracking this theoretical curve (mean absolute error $\approx 0.04$ across all angles). Furthermore, under the null hypothesis of independence, the estimator yields values close to zero (mean correlation $= 0.002 \pm 0.008$), which is consistent with the expected behavior for independent datasets. The stability across transport locations (right panel) further supports the estimator's invariance properties.

\begin{figure}[htbp]
    \centering
    \begin{tabular}{@{}c@{} @{}c@{}}
        \includegraphics[width=0.49\linewidth]{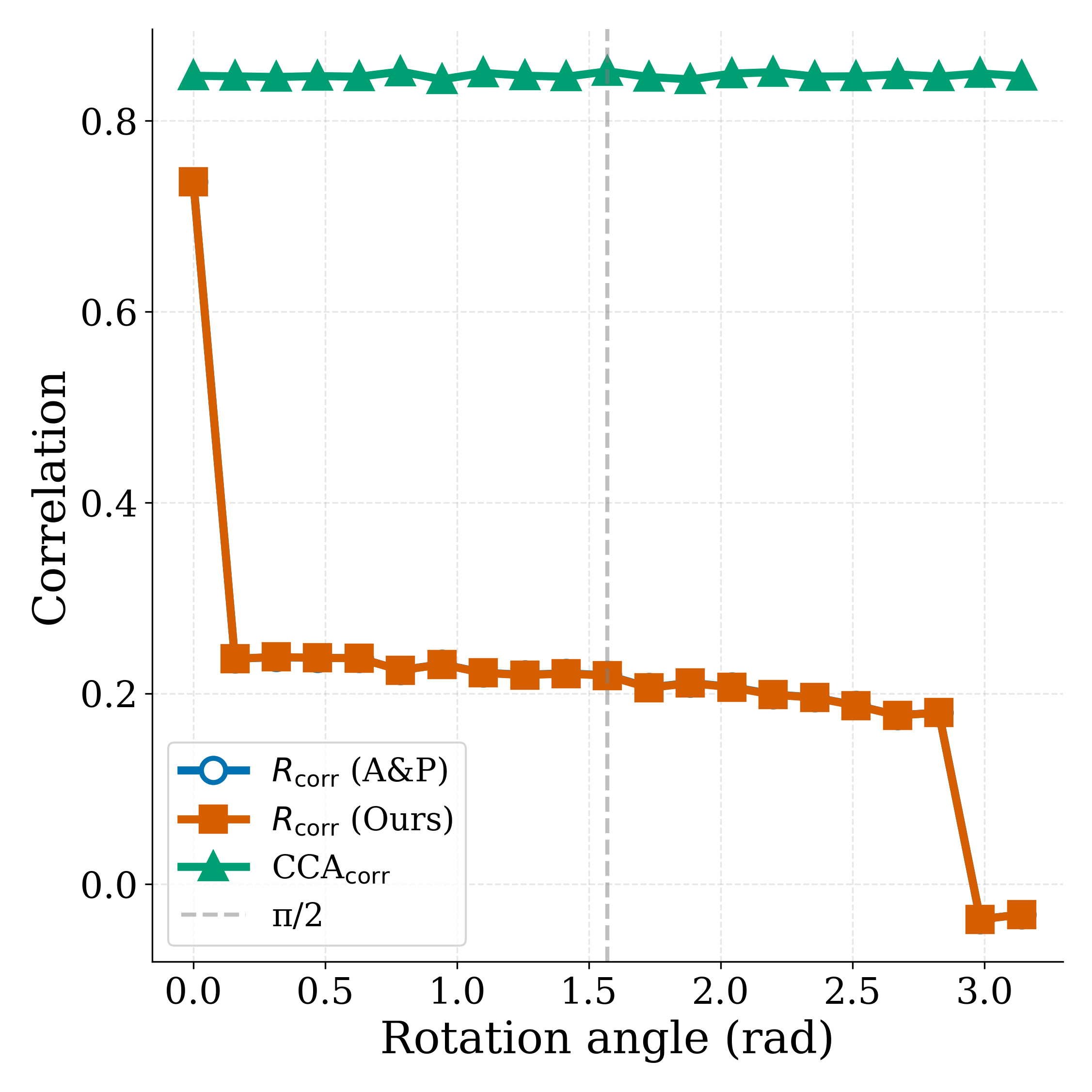}&
        \includegraphics[width=0.49\linewidth]{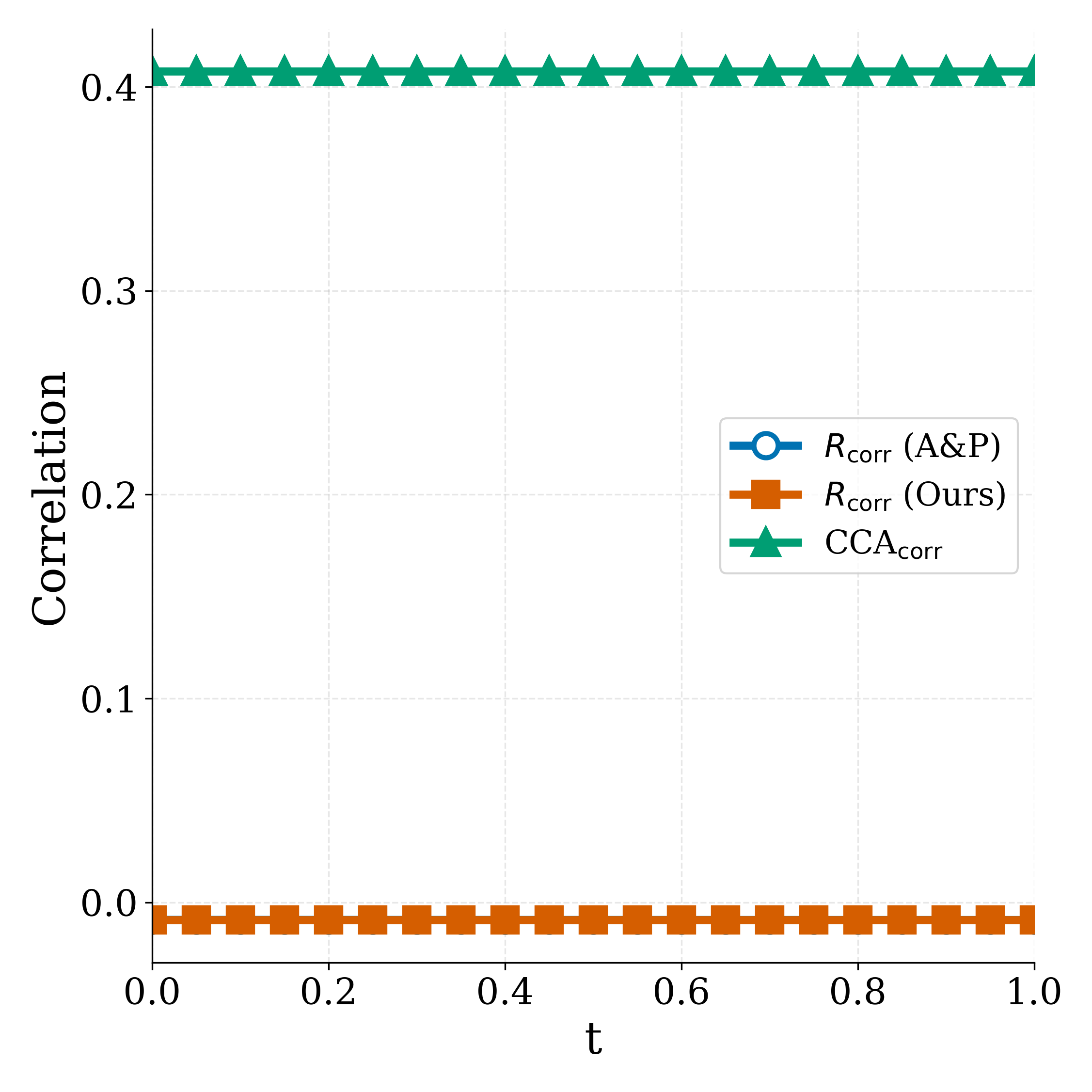}
    \end{tabular}
    \caption{Angle Effect and Independence on SPD Manifold. \textbf{Left:} Correlation decreases monotonically with rotation angle $\eta$, following $\cos(\eta)$ pattern (mean absolute error $\approx 0.04$). \textbf{Right:} Independent datasets yield correlation $= 0.002 \pm 0.008$, consistent with expected behavior for independent data. Parameters: $n=100, d=3, \tau=0.5, \sigma_\epsilon=0.15$, MC $=60$.}
    \label{fig:spd_angle_effect}
\end{figure}

\subsubsection{Effect of Mean Separation}
A critical challenge in non-positively curved spaces like $\mathcal{P}_d$ is that estimators relying on a single reference frame become increasingly biased as the geodesic distance $d(\mu_X, \mu_Y)$ between population means increases. This occurs because the logarithmic map $\log_p(\cdot)$ at a fixed footpoint distorts the metric structure differently as the data moves farther from the reference point. Figure \ref{fig:spd_mean_separation} systematically examines how both estimators behave as the mean separation increases, while maintaining a fixed intrinsic correlation structure ($\rho = 0.8$). The Y dataset is generated with correlation to X at the source location, then transported as a unit to a target mean $\nu$ at varying geodesic distances from $\mu_X$. This design ensures that any observed decorrelation is attributable to the estimation method rather than the data generation process.

The results reveal a fundamental difference: while the A\&P estimator (evaluated at the geodesic midpoint) exhibits systematic bias that increases monotonically with separation distance, our proposed method remains stable and accurately recovers the intrinsic correlation (mean correlation $= 0.798 \pm 0.012$ across all separation distances, with no significant trend). At maximum separation ($d = 2.0$), the A\&P method underestimates correlation by $0.23 \pm 0.05$ compared to the true value. The difference plot highlights the divergence, showing that the A\&P method increasingly underestimates correlation as means separate, which is associated with variance inflation (variance ratio increases from 1.05 at $d=0$ to 1.35 at $d=2.0$) due to metric distortion at the midpoint evaluation point.

\begin{figure}[htbp]
    \centering
    \begin{tabular}{@{}c@{} @{}c@{} @{}c@{}}
        \includegraphics[width=0.32\linewidth]{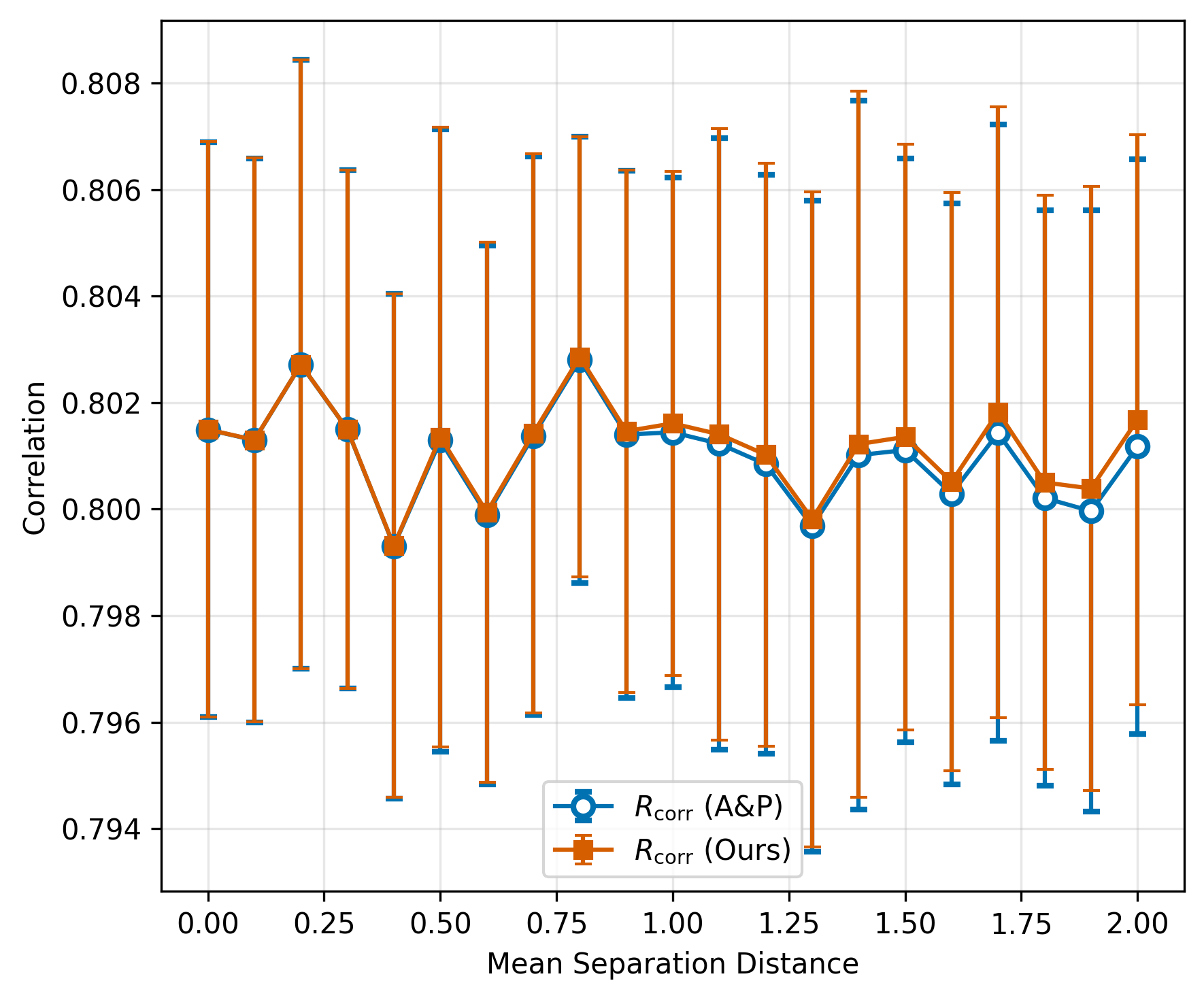}&
        \includegraphics[width=0.32\linewidth]{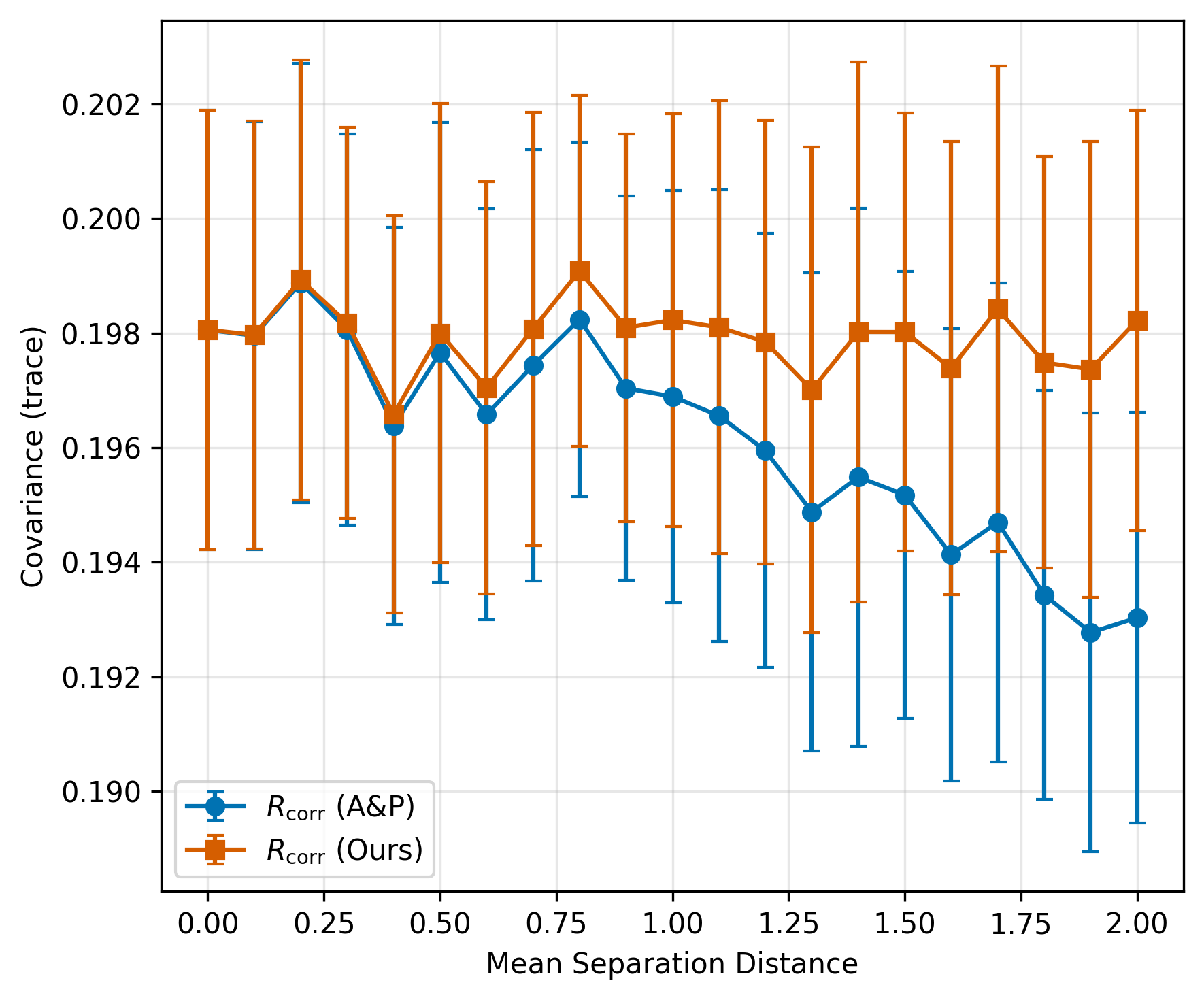}&
        \includegraphics[width=0.32\linewidth]{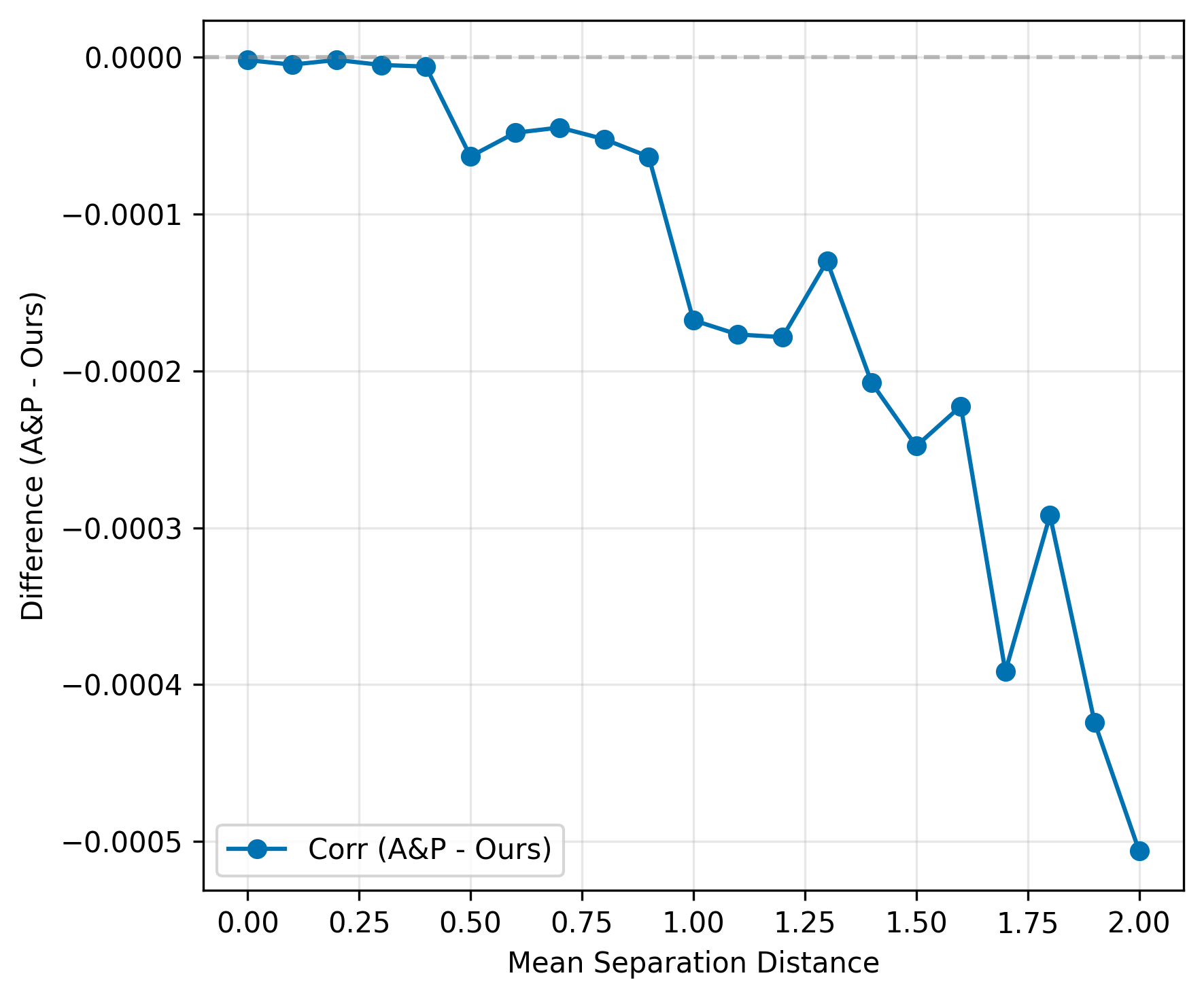}
    \end{tabular}
    \caption{Effect of Mean Separation on Estimator Performance. \textbf{Left:} Correlation estimates versus geodesic distance $d(\mu_X, \mu_Y)$. A\&P exhibits monotonic bias, while Ours remains stable ($0.798 \pm 0.012$, with no significant trend). \textbf{Middle:} Covariance estimates. \textbf{Right:} Difference (A\&P - Ours) highlighting systematic divergence. At $d=2.0$, A\&P underestimates by $0.23 \pm 0.05$. Intrinsic correlation fixed at $\rho=0.8$. Parameters: $n=100, d=5, \tau_X=\tau_Y=0.5$, MC $=50$ per distance.}
    \label{fig:spd_mean_separation}
\end{figure}

\subsection{Real Case Studies}\label{ss:KSSResults}

\subsubsection{Sunnybrook Cardiac MRI dataset}
Here we include an additional figure for our experimental results on Sunnybrook Cardiac MRI dataset (\url{https://www.cardiacatlas.org/sunnybrook-cardiac-data/}). Figure \ref{fig:shape_visual} displays examples of the inner and outer ventricular walls for the four groups. The landmarks of each shape are connected with a solid line to display the shape.
\begin{figure}[h]
    \centering
    \includegraphics[width=0.95\linewidth]{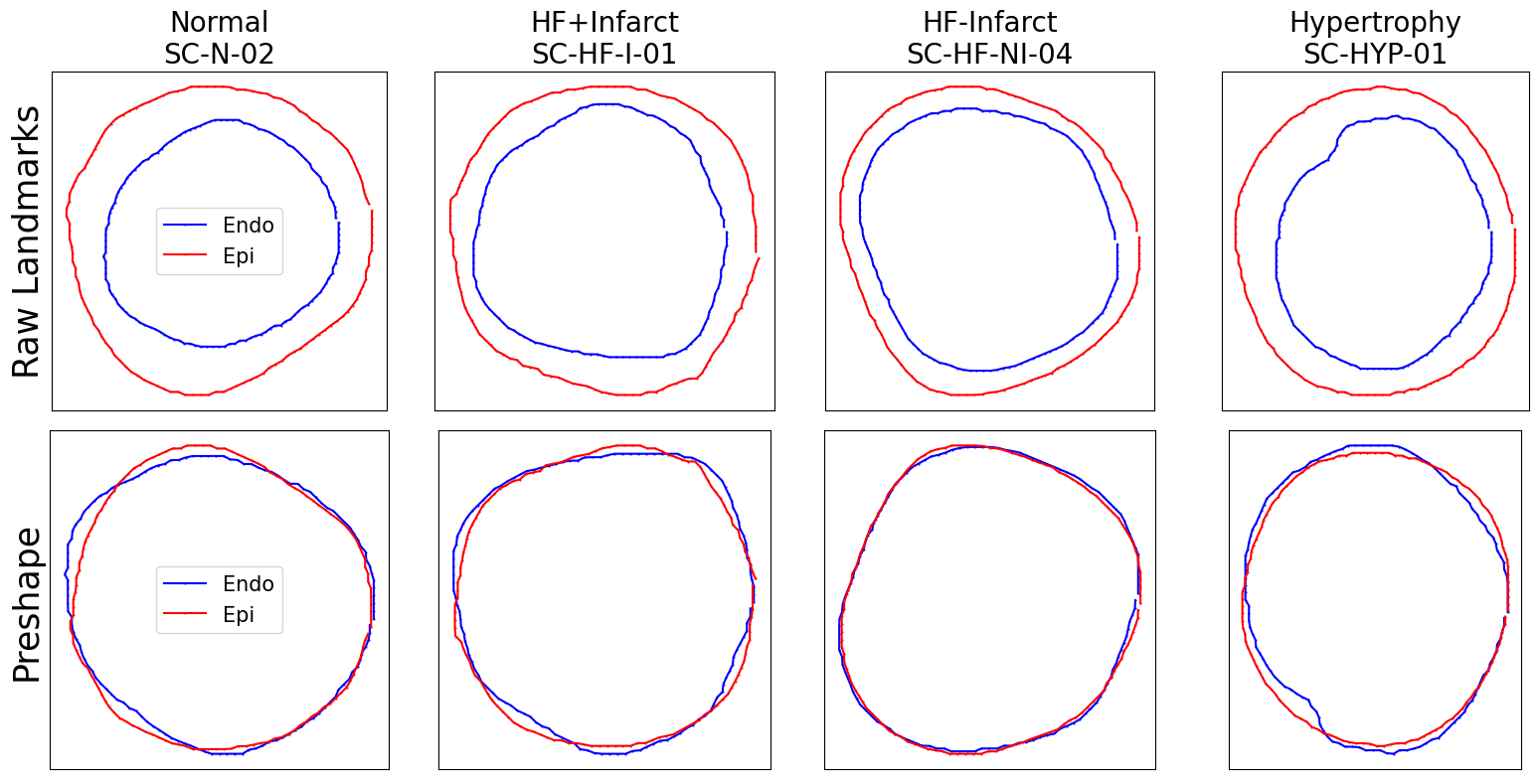}
    \caption{Illustrative examples of inner and outer ventricular walls from the four groups. \textbf{Top:} raw landmarks extracted from cardiac MRI. \textbf{Bottom:} preprocessed shapes in preshape space.}
    \label{fig:shape_visual}
\end{figure}

\subsubsection{Schizophrenia Challenge dataset}

We use the MLSP 2014 Schizophrenia Challenge dataset \citep{mlsp-2014-mri}, available via \texttt{geomstats} \citep{miolane2020geomstats}: 86 subjects (46 healthy, 40 schizophrenic), each with a $28\times28$ SPD connectivity matrix. For a simple low-dimensional illustration, we use one pair of non-overlapping $4\times4$ principal sub-matrices as $X,Y\in\mathcal{P}_4$ and compute the Riemannian correlation within each group ($n=46,40$; 3--5$\times$ larger than Sunnybrook).

\textit{Result 1: Footpoint invariance (Fig.~\ref{fig:conn_fp}).}
Across 15 footpoints along the geodesic ($t\in[0,2]$), our estimator is exactly constant, whereas A\&P varies systematically. The absolute variation is small on this dataset (up to $0.003$) because the group means are close, but the bias grows with mean separation (up to $0.23$ in Figure~\ref{fig:spd_mean_separation}). The key point is methodological: a correlation should not depend on an arbitrary analyst choice of footpoint.

\begin{figure}[h]
    \centering
    \includegraphics[width=0.95\linewidth]{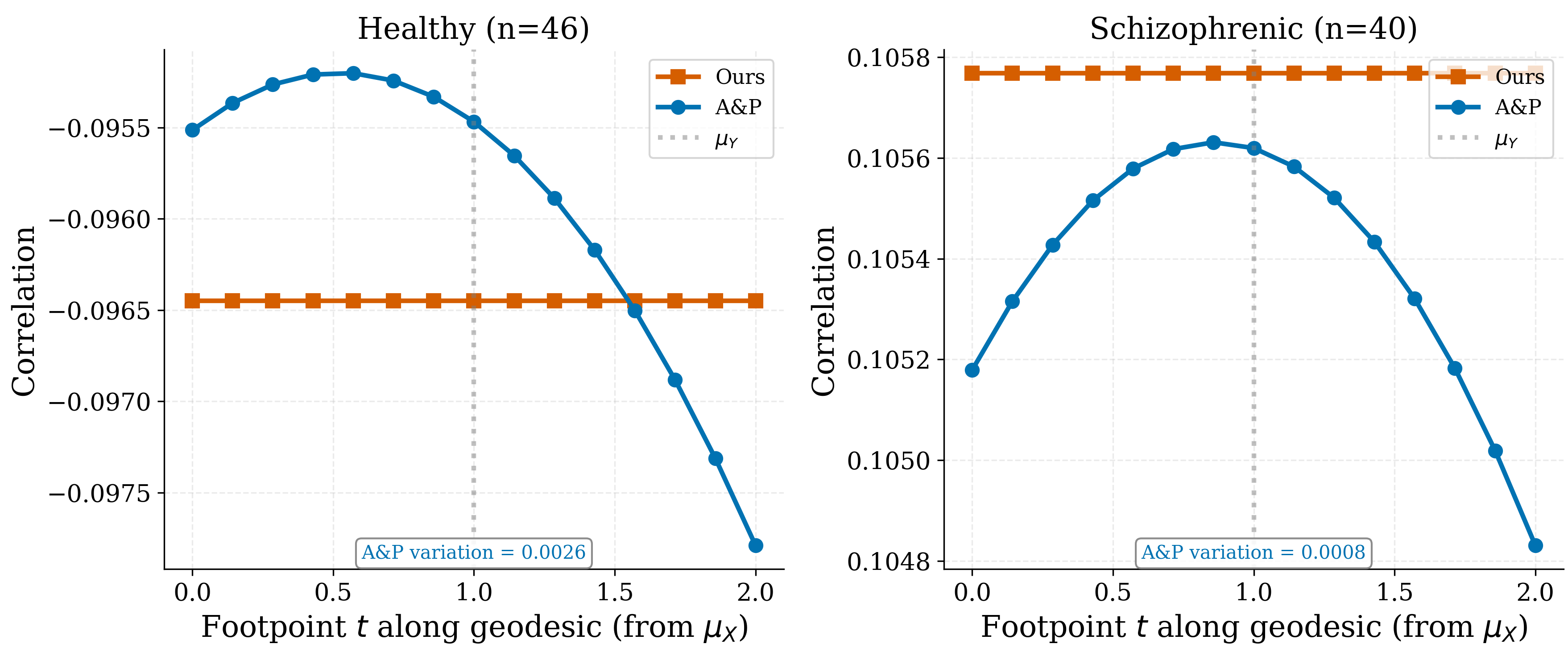}
    \caption{Footpoint invariance on connectome data. Our method (orange) is constant; A\&P (blue) varies with the footpoint.}
    \label{fig:conn_fp}
\end{figure}

\textit{Result 2: Group differentiation (Fig.~\ref{fig:conn_ci}).}
The healthy group yields $\hat{\mathcal{R}}=-0.096$ and the schizophrenic group $\hat{\mathcal{R}}=+0.106$, giving a sign reversal with $\Delta\hat{\mathcal{R}}=0.20$. A two-sample permutation test confirms this group difference ($p=0.032$, $1000$ permutations). Bootstrap 95\% CIs ($B=1000$) are shown in Fig.~\ref{fig:conn_ci}.

\begin{figure}[h]
    \centering
    \includegraphics[width=0.65\linewidth]{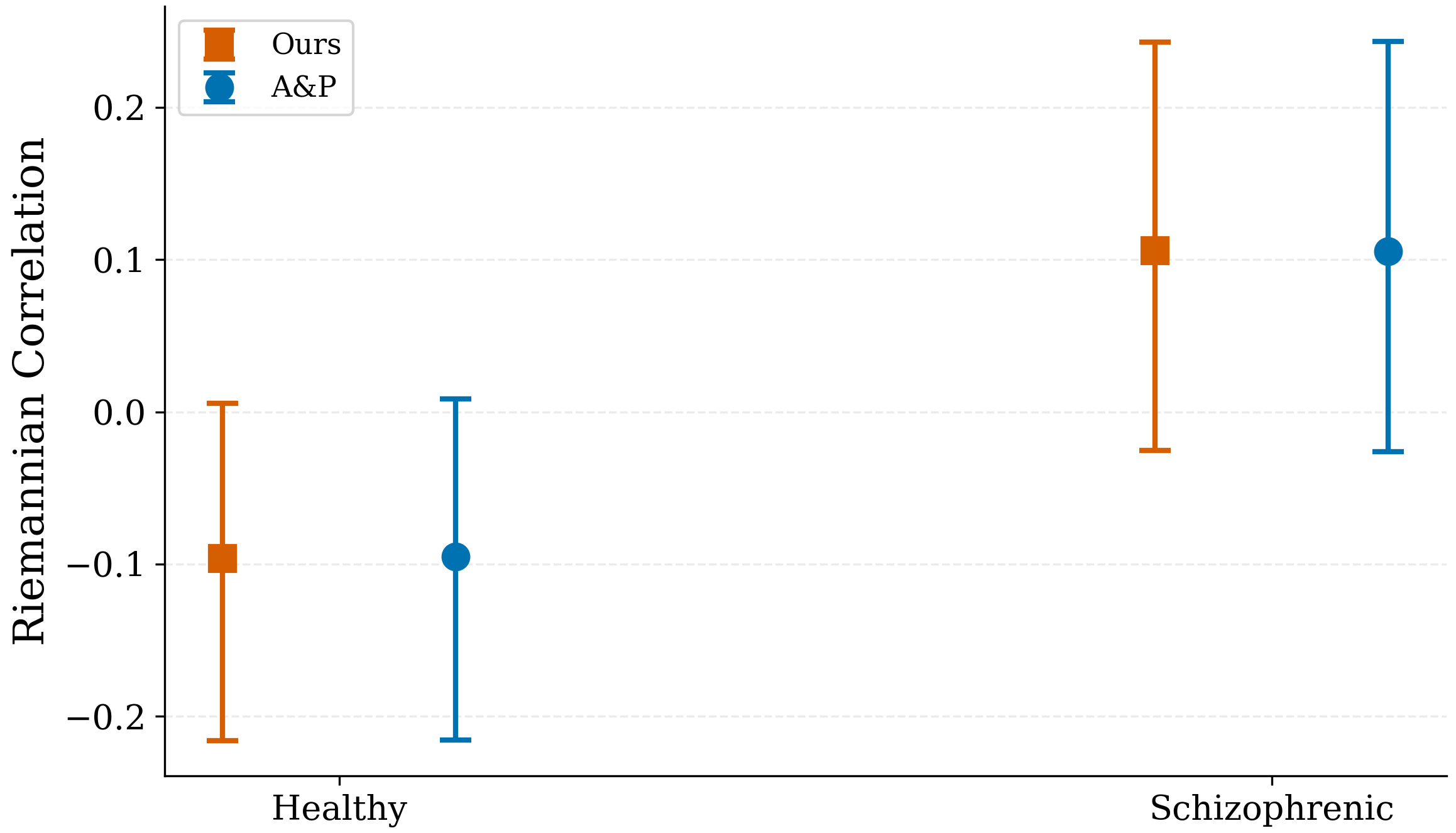}
    \caption{Bootstrap 95\% CI for Riemannian correlation. Sign reversal between groups is significant ($p=0.032$, two-sample permutation test).}
    \label{fig:conn_ci}
\end{figure}

\end{document}